\newcommand{\veps}{\varepsilon}
\newcommand{\ve}{\varepsilon}
\newcommand{\hu}{w}
\newcommand{\hU}{U}
\newcommand{\hP}{P}
\newcommand{\hp}{\phi}
\newcommand{\RR}{\mathbb{R}}
\newcommand{\UUb}{\overline{U}_\beta}
\newcommand{\Vb}{V}
\newcommand{\Wb}{W_\beta} 
\newcommand{\UU}{\overline{U}}
\newcommand{\betav}{\beta_v}
\newcommand{\deltaz}{\eta_0}	
\newcommand{\Hv}{H_v}
\newcommand{\mo}{d_0}
\newcommand{\wt}{\widetilde}
\newcommand{\bb}{\bold{b}}
\newcommand{\aaa}{\bold{a}}
\numberwithin{equation}{section}
\newsavebox{\bigimage}
\renewcommand\subsubsection{\@secnumfont}{\bfseries}%
\renewcommand\subsubsection{\@startsection{subsubsection}{3}
	\z@{.5\linespacing\@plus.7\linespacing}{-.5em}%
	{\normalfont\bfseries}}
\theoremstyle{plain}
\newtheorem{theorem}{Theorem}[section]
\newtheorem{lemma}[theorem]{Lemma}
\newtheorem{proposition}[theorem]{Proposition}
\theoremstyle{definition}
\newtheorem*{defi*}{Definition}
\theoremstyle{remark}
\newtheorem{remark}{Remark}
\theoremstyle{remark}
\let\alignts@preamble\align@preamble
\patchcmd{\alignts@preamble}{\displaystyle}{\textstyle}{}{}
\patchcmd{\alignts@preamble}{\displaystyle}{\textstyle}{}{}
\def\alignts{\let\align@preamble\alignts@preamble\start@align\@ne\st@rredfalse\m@ne}
\title[Formation of singularities for the Camassa-Holm equation]{Sharp regularity of gradient blow-up solutions in the Camassa-Holm equation}
 \author[Y. Kim]{Yunjoo Kim}
\address[YK]{Department of Mathematical Sciences, Ulsan National Institute of Science and Technology, Ulsan, 44919, Korea}
\email{gomuli3@unist.ac.kr}
\author[B. Kwon]{Bongsuk Kwon}
\address[BK]{Department of Mathematical Sciences, Ulsan National Institute of Science and Technology, Ulsan, 44919, Korea}
\email{bkwon@unist.ac.kr}
\author[J. Yoon]{Jeongsik Yoon}
\address[JY]{Department of Mathematical Sciences, Ulsan National Institute of Science and Technology, Ulsan, 44919, Korea}
\email{jeongsik@unist.ac.kr}
\date{\today}
\subjclass{Primary: 	35Q35,  35A21, 76B15, 35C06, Secondary: 76B25  }
\begin{document}
\begin{abstract}
We study the formation of singularities in the Camassa-Holm (CH) equation, providing a detailed description of the blow-up dynamics and identifying the precise H\"older regularity of the gradient blow-up solutions. To this end, we first construct self-similar blow-up profiles and examine their properties, including the asymptotic behavior at infinity, which determines the type of singularity. Using these profiles as a reference and employing modulation theory, we establish global pointwise estimates for the blow-up solutions in self-similar variables, thereby demonstrating the stability of the self-similar profiles we construct.
Our results indicate that the solutions, evolving from smooth initial data within a fairly general open set, form $C^{3/5}$ cusps as the first singularity in finite time.
These singularities are analogous to \emph{pre-shocks} emerging in the Burgers equation, exhibiting unbounded gradients while the solutions themselves remain bounded. 
However, the nature of the singularity differs from that of the Burgers equation, which is a cubic-root singularity, i.e., $C^{1/3}$. 
Our work provides the first proof that generic pre-shocks of the CH equation exhibit $C^{3/5}$ H\"older regularity.
It is our construction of new self-similar profiles, incorporating the precise leading-order correction to the CH equation in the blow-up regime, that enables us to identify the sharp H\"older regularity of the singularities and to capture the detailed spatial and temporal dynamics of the blow-up.
We also show that the generic singularities developed in the Hunter-Saxton (HS) equation are of the same type as those in the CH equation.

	\noindent{\it Keywords}: Camassa-Holm; Hunter-Saxton; Wave breaking; Singularities; Self-similar blow-up 
	\end{abstract}
	\maketitle 
	\tableofcontents
	
	\section{Introduction}
	We consider the Camassa-Holm (CH) equation on the real line $\mathbb{R}$: 
	\begin{equation}\label{CH_main}
		u_t + 2 \gamma u_x -u_{xxt}+3uu_x=2u_xu_{xx}+uu_{xxx}, 
	\end{equation}  
	where $u=u(x,t)$ is the unknown function representing the free surface of water or velocity in non-dimensional variable and $\gamma\ge0$ is a parameter of dispersion, related to the critical shallow water speed. 
%
The CH equation \eqref{CH_main} is an integrable, dimensionless, nonlinear partial differential equation that arises as a shallow water approximation of the Euler equations for inviscid, incompressible fluids. It is derived as a bi-Hamiltonian model describing the unidirectional propagation of shallow water waves over a horizontal bed \cite{CH, CHH}. See also \cite{FF} and \cite{OR} for derivations of \eqref{CH_main} using the recursion operator method and the tri-Hamiltonian duality approach, respectively.

There have been extensive studies on the CH equation, which exhibits rich mathematical structures (see \cite{CL, JHV}). 
Notably, the existence of peakon solutions--solitons with a sharp peak \cite{CH}--in the non-dispersive case $\gamma=0$, and the occurrence of wave-breaking phenomena for all $\gamma\ge0$, are remarkably distinct features of \eqref{CH_main} when compared to the KdV equation. 
In fact, \eqref{CH_main} is the first equation to capture both soliton-type solitary waves and breaking waves. It is known that the solitary waves of the CH equation ($\gamma=0$) are peaked \cite{CH}, and they are stable solitons \cite{AW, RDJ1999}. 
On the other hand, peakon-antipeakon interactions are known to lead to wave breaking in finite time (loss of energy) \cite{13, E}, or to waves passing through each other with energy conserved, with each continuing unscathed as a solitary wave \cite{RDJ2001, P}. We refer to \cite{B-C, 8, 10, 14, 15, F} for various studies on the CH equation and related models.

A central line of inquiry in the analysis of PDEs arising in water waves and fluid dynamics is the formation of singularities in classical solutions. Remarkably, compared to the KdV equation, \eqref{CH_main} models breaking waves, characterized by a bounded solution whose gradient becomes unbounded in finite time. 
Wave-breaking for \eqref{CH_main} has been extensively studied \cite{BC, 8, 11, 12, CS, LO, TYZ, Y, Z}, including the local-in-space blow-up \cite{B}. See also blow-up results for the related models \cite{BC1,  GLOQ}. It is known that the first singularities in solutions can occur only in the form of breaking waves,
see \cite{7, 9}. 
To elucidate this, we note that among several invariant integrals, \eqref{CH_main} possesses an invariant energy integral: 
	\begin{equation}\label{Hamiltonian}
		H(t):=\int_{\mathbb{R}} ( u^2 + u_x^2 ) (x, t)  \,dx. 
	\end{equation}
We refer to Remark~\ref{Hamiltonian-rem} in Section~\ref{Global-est} for \eqref{Hamiltonian}. From \eqref{Hamiltonian}, we see that 
$\| u (\cdot,t ) \|_{L^\infty} \lesssim \| u (\cdot,t) \|_{H^1} \equiv \sqrt{ H(t_0) }  <\infty$ for all $t<T_\ast$, where we let $T_\ast$ be the life span of smooth solution. 
Since the solution is uniformly bounded, the $C^1$ breakdown implies the occurrence of breaking waves.
This blow-up bears some resemblance to the \emph{pre-shocks} occurring in the Burgers equation. 
Until recently, for nonlinear transport-type equations, including the Burgers equation, the compressible Euler equations, the Euler-Poisson system and several models of water waves, most results concerning $C^1$ blow-up were obtained using the method of characteristics combined with energy methods. 
Due to the limitations of this approach, information about blow-up solutions is lacking, and at best, only the temporal blow-up rate along the characteristic curve can be obtained.
It has recently been shown that for these equations, when the solutions experience gradient blow-up, they generically exhibit $C^{1/3}$ regularity at the blow-up time. See \cite{BSV, BSV2} for the multi-D compressible Euler equations and \cite{BKK2, BKK} for the Euler-Poisson system. 

Then a natural question arises: Do the generic blow-up solutions for \eqref{CH_main} also possess $C^{1/3}$ regularity? 
If not, what regularity do these solutions exhibit at the blow-up time? Thanks to the invariant energy integral \eqref{Hamiltonian}, we can easily rule out the possibility of $u$ having $C^{1/3}$ regularity, as in the case of the Burgers equation. This is because, in view of
\footnote{
 Since $ [ u(\cdot, t) ]_{C^{1/2}(\Omega) } \lesssim \| u(\cdot, t) \|_{H^1(\RR)}$ for any bounded set $\Omega\subset \mathbb{R}$ holds due to Morrey's inequality, the sharp local $C^{1/3}$ blow-up implies the blow-up in the $H^1$ norm. }
{Morrey's inequality,}
 $\| u(\cdot, t) \|_{H^1(\RR)}$ becomes unbounded as $t\to T_\ast$, which contradicts the Hamiltonian invariant integral \eqref{Hamiltonian}.
From this, one can infer that the singularities, if occur, are milder than $C^{1/2}$. What exactly is this regularity? The present work is devoted to answering this question. 
Interestingly, some numerical studies \cite{  CGV, DLSS}, utilizing the spectral method for tracing complex singularities developed in \cite{SSF}, suggest that the regularity is $C^\alpha$ with $\alpha \approx 0.58$. 
In fact, our results assert that $\alpha=3/5$.  
To the best of our knowledge, this is the first proof of sharp H\"older regularity of gradient blow-up solutions of the CH equation \eqref{CH_main}, along with the detailed spatial and temporal dynamics of the blow-up. 
To prove this, we first construct the asymptotically self-similar blow-up profiles to 
\eqref{CH_main}.
	\subsection{Self-similar blow-up profiles} 
	We introduce a smooth function $\UU(y):\mathbb{R} \to \mathbb{R}$ satisfying the ODE: 
	\begin{equation}\label{Ueq-int}
		\left(1+\frac{1}{2}\UU'(y) \right)\UU'(y) +\left(\UU(y) +\frac{5}{2}y\right)\UU''(y) = 0, \quad y\in \mathbb{R}.
	\end{equation}
	
		 In Proposition~\ref{Profile-construct} in Section~\ref{BPA}, we prove that \eqref{Ueq-int} admits a one-parameter family of solutions, $\{ \UUb(y) : \beta>0\}$, which are odd functions whose first derivatives attain their minimum at $y=0$. These solutions satisfy
	\begin{equation}\label{U0}
	 \UUb(0) =0, \quad \UUb'(0) = -2, \quad \UUb''(0) = 0, \quad \UUb^{(3)}(0) = 256 \beta, 
	\end{equation} 
	and for $k=0,1,2$, 
		\begin{equation}\label{decay-infty-35}
		\sup_{y \in \mathbb{R}}  \left( (1+ |y|)^{k-3/5} | \UUb^{(k)} (y)| \right) \lesssim_{k, \beta} 1.
		 \end{equation} 
	Specifically, for each $\beta>0$, it holds that
	\begin{equation}\label{UU'-decay}
 |y|^{2/5} | \UUb'(y) | \to  {(50\beta)^{-1/5}} \quad \text{ as } |y|\to \infty.
      \end{equation}
      More detailed properties of $\UUb$ are presented, along with their proofs, in Proposition~\ref{Profile-construct}. 

	\subsection{Assumptions on the initial data}\label{Initial_subs}
We describe the initial conditions that lead to a finite-time $C^1$ blow-up in the solution of \eqref{CH_main}. Let the initial time be $t = -\veps$, where $\veps$ is chosen to be sufficiently small. That is, we consider the initial value problem for \eqref{CH_main} in the domain $ \mathbb{R} \times [-\veps, \infty)$, with the initial data
	\begin{equation}\label{in-H-C}
		u_0(x)=u(x,-\veps)\in H^5(\mathbb{R})\subseteq C^4(\mathbb{R}).
	\end{equation}
	We assume that $\partial_x u_0$ attains its global minimum at $x=0$ such that  
	\begin{equation}
	 \partial_xu_0(0)=-2\veps^{-1}, \quad \partial_x^2u_0(0)=0, \quad \partial_x^3u_0(0) =: k_3 \in [\veps^{-(1+\deltaz)},\veps^{-(11-\deltaz)}] \label{init_w_3-p}
	\end{equation}
for some $0<\deltaz\ll 1$, and that 
\footnote{Here $A \lesssim B $  means $A \le C B$ for some generic constant $C>0$, independent of $\veps$.}
	\begin{equation}\label{init_24-p}
	\|\partial_x u_0\|_{L^{\infty}}\leq 2\veps^{-1}, \quad 
	\|\partial_x^2u_0\|_{L^{\infty}}\lesssim   {k_3}^{1/2} \veps^{-1/2}, \quad
 \|\partial_x^3u_0\|_{L^{\infty}}\lesssim k_3, \quad
  \|\partial_x^4u_0\|_{L^{\infty}}\lesssim  k_3^{3/2}\veps^{1/2}. 
\end{equation}	
%
%
%
%
%
%
%
%
	Letting 
	\begin{equation} \label{beta}
			\beta:=\frac{ \veps^6}{256} k_3
 \in \left[\frac{\veps^{5-\deltaz}}{256},\frac{\veps^{-(5-\deltaz)}}{256}\right],
		\end{equation} 
	  we assume that 
	  	\begin{equation}\label{4.3a-p}
		\left|\varepsilon(\partial_x u_0)(x)-\overline{U}'_\beta \left(\frac{x}{\veps^{5/2}}\right)\right|\leq \min\left\{\frac{\beta(\frac{x}{\veps^{5/2}})^2}{3000(1+\beta(\frac{x}{\veps^{5/2}})^2)}, \frac{ \Theta   }{1+\beta^{1/5}(\frac{x}{\veps^{5/2}})^{2/5}}\right\} 
	\end{equation}
	for all $x\in\mathbb{R}$, where $\overline{U}_\beta$ is the smooth solution of \eqref{Ueq-int} satisfying \eqref{U0}--\eqref{UU'-decay}, and 
	$\Theta>0$ is any number such that 
		\begin{equation}
		\label{theta-val}
		50^{-1/5} ( \approx 0.457 ) <  \Theta <  \frac{6}{13}. 
	\end{equation}
	We also assume that 
	\begin{equation}\label{4.3a-p2}
		\lim_{|x|\rightarrow \infty}|x^{2/5}\partial_xu_0(x)|\leq \frac{\theta}{2\beta^{1/5}},
	\end{equation}
 where 
	\begin{equation}\label{theta} 
	\theta := \frac13 \left( \frac{6}{13} - \Theta\right).
	\end{equation}

We remark that \eqref{4.3a-p}--\eqref{4.3a-p2} represent the conditions of localization, ensuring that the singularity develops along the modulation curve where $u$ maintains its steepest gradient. 
In our analysis, which makes use of the maximum principle for transport-type equations, the spatial localization of solutions is crucial for deriving pointwise estimates that establish global stability.  
Note from \eqref{UU'-decay} and  \eqref{theta-val} that 
\[
 \lim_{|y| \to \infty}  |y|^{2/5} | \UUb'(y) | = (50 \beta)^{-1/5} < \Theta \beta^{-1/5}.
\]
In view of this, we observe that \eqref{4.3a-p} permits $u_0$ to decay sufficiently fast to $0$ as $|x| \to \infty$, while remaining consistent with \eqref{in-H-C}. Specifically, $u_0$ is allowed to have compact support.

	In what follows, $C^\alpha(\Omega)$  denotes  the H\"older space with exponent $\alpha\in(0,1]$, and $[u]_{C^{\alpha}(\Omega) }$ represents the corresponding H\"older semi-norm, defined as 
\begin{equation*}
	[u]_{C^{\alpha}(\Omega) } := \sup_{x,y\in \Omega, x\ne y} \frac{|u(x) - u(y) | }{|x-y|^{\alpha}}.
\end{equation*}

Now, we state our main theorem.
                \begin{theorem}\label{mainthm}
		There exists a constant 
		$\ve_0 = \ve_0(\gamma, \|u_0\|_{H^1}, \|u_0\|_{C^4})>0$ such that for each $\veps\in(0,\veps_0)$, if the initial data $u_0\in H^5(\mathbb{R})$ satisfies \eqref{in-H-C}--\eqref{4.3a-p2},
		then the initial value problem \eqref{CH} admits a unique smooth solution 
		$$u\in C([-\ve, T_\ast); H^5(\mathbb{R}) ) \subset C^4  \left([-\veps,T_\ast) \times \mathbb{R}\right),$$
		which blows up in the $C^1$-norm at $t=T_\ast$ for some $T_\ast<\infty$. 
		Furthermore, $u$ exhibits regularity of $C^{3/5}$ at $t=T_\ast$ and $x=x_\ast$, where $x_\ast$ is the blow-up location.\footnote{$x_\ast$ is defined in the proof of Theorem~\ref{mainthm}.} 
		More specifically, for any bounded open subset $\Omega\subset\mathbb{R}$, it holds that 
		\[ \sup_{t<T_\ast} \left[ u(\cdot, t) \right]_{C^{3/5}(\Omega)}<\infty; \]
		and for any $\alpha> 3/5$, 
		\begin{equation}\label{blow-up-result}
		\lim_{t\nearrow T_\ast} \left[ u (\cdot, t) \right]_{C^{\alpha}(\Omega)}
		\begin{cases} 
		=\infty & \text{if } x_\ast\in \Omega,
		\\
		<\infty & \text{if } x_\ast \notin \overline{\Omega}. 
		\end{cases}
		\end{equation}
		  For any open bounded set $\Omega$ containing $x_\ast$ and for $\alpha>3/5$, the temporal blow-up rate is given by 
				\begin{equation}\label{blow-up-rate}
				\left[ u(\cdot, t) \right]_{C^\alpha(\Omega)}\sim (T_\ast-t)^{-\frac{5\alpha-3}{2}}
				\end{equation} 
				for all $t$ sufficiently close to $T_\ast$.\footnote{Here, $A(t) \sim B(t) $ means that $C^{-1} B(t) \le A(t) \le C B(t)$ for some $C>0$ independent of $t$. 
				We also note that $\ve_0=\ve_0(\gamma, \|u_0\|_{H^1}, \|u_0\|_{C^4})>0$ depends only on $\gamma, \|u_0\|_{H^1}$ and $\|u_0\|_{C^4}$.}
                \end{theorem}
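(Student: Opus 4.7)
The overall strategy is a self-similar blow-up analysis via modulation theory, with the profile family $\{\overline{U}_\beta\}$ of Proposition~\ref{Profile-construct} playing the role of the attractor for the singular dynamics.

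\textbf{Reformulation and self-similar variables.} First I would recast \eqref{CH_main} as a quasilinear transport equation $u_t + uu_x = F[u]$, where $F[u]$ is a nonlocal source arising from $(1-\partial_x^2)^{-1}\bigl(u^2+\tfrac12 u_x^2\bigr)$ together with the dispersive piece $-2\gamma u$. Because $F[u]$ is controlled in Lipschitz norm uniformly by the invariant energy \eqref{Hamiltonian}, it will act as a lower-order perturbation in the steepening regime. Next I introduce time-dependent modulation parameters $\mu(t),\xi(t),\kappa(t)$ (amplitude scale, blow-up location, vertical shift) and pass to self-similar variables
\begin{equation*}
y=\frac{x-\xi(t)}{\mu(t)^{5/2}},\qquad W(y,s)=\mu(t)^{-3/2}\bigl(u(x,t)-\kappa(t)\bigr),\qquad \frac{ds}{dt}=\mu(t)^{-1}.
\end{equation*}
Under this scaling, the leading-order stationary equation for $W$ is exactly \eqref{Ueq-int}, and the exponents $3/2,\,5/2$ are fixed by the profile's asymptotic $|y|^{2/5}|\overline{U}'_\beta(y)|\to(50\beta)^{-1/5}$. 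The modulation parameters are chosen to enforce three orthogonality conditions at $y=0$ (matching value, first, and second derivatives of $W$ to those of $\overline{U}_\beta$), thereby killing the neutral modes generated by translation, vertical shift, and rescaling of the profile.

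\textbf{Global stability in self-similar variables.} The core of the argument is a bootstrap establishing the pointwise bound
\begin{equation*}
|W(y,s)-\overline{U}_\beta(y)|\lesssim \min\!\left\{\tfrac{\beta y^2}{1+\beta y^2},\,\tfrac{1}{1+\beta^{1/5}|y|^{2/5}}\right\},
\end{equation*}
together with analogous weighted bounds on the first three derivatives consistent with \eqref{init_24-p}--\eqref{4.3a-p}. The perturbation $\widetilde W:=W-\overline{U}_\beta$ solves a transport equation whose linearized drift is $\overline{U}_\beta(y)+\tfrac52 y$; applying the maximum principle along its characteristics, combined with envelope arguments, propagates the weighted bounds globally in $s$. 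The nonlocal and dispersive corrections contribute only negligible forcing, thanks to the $H^1$-invariance. ODEs for the modulation parameters are derived from the three orthogonality conditions, yielding $\mu(t)\sim T_\ast-t$ and thereby defining $T_\ast$ and $x_\ast:=\lim_{t\to T_\ast}\xi(t)$. The hard part will be closing this pointwise bootstrap: the profile's decay $\overline{U}'_\beta(y)\sim|y|^{-2/5}$ is only barely integrable, so the outer envelope \eqref{4.3a-p} must propagate with no loss of constants, which is precisely why the sharp numerical constraint \eqref{theta-val} on $\Theta$ and the three-parameter alignment \eqref{init_w_3-p} are imposed. A secondary difficulty is that $F[u]$ couples the inner (self-similar) and outer (energy) regions, and must be estimated uniformly in $\veps$ when re-expressed in self-similar coordinates.

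\textbf{Sharp regularity and blow-up rate.} As $t\nearrow T_\ast$, for $x$ fixed near $x_\ast$ the rescaled coordinate $y=(x-\xi(t))/\mu(t)^{5/2}$ tends to infinity, so the stability bound combined with \eqref{UU'-decay} gives
\begin{equation*}
u(x,T_\ast)-u(x_\ast,T_\ast)\sim c_\ast\,\mathrm{sgn}(x-x_\ast)\,|x-x_\ast|^{3/5},
\end{equation*}
which yields sharp $C^{3/5}$ regularity at $x_\ast$ and rules out any larger H\"older exponent. Standard propagation of regularity along smooth characteristics away from $x_\ast$ completes the dichotomy \eqref{blow-up-result}. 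Finally, the self-similar seminorm scaling $[u(\cdot,t)]_{C^\alpha(\Omega)}\sim\mu(t)^{3/2-5\alpha/2}\sim(T_\ast-t)^{-(5\alpha-3)/2}$ yields \eqref{blow-up-rate}.
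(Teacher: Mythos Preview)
Your proposal is correct and follows essentially the same route as the paper: self-similar coordinates with exponents $3/2,\,5/2$ built around the profile $\overline U_\beta$, three constraints at $y=0$ fixing the modulation parameters $(\tau,\kappa,\xi)$, a weighted pointwise bootstrap closed via maximum principles along characteristics, and the $C^{3/5}$ regularity and rate read off from the $|y|^{-2/5}$ decay of $U_y$. The only notable cosmetic difference is that the paper bootstraps directly on $U_y-\overline U'$ (after a $\beta$-normalization reducing to the $\beta=1$ profile) rather than on $W-\overline U_\beta$, and uses $\tau(t)-t$ in place of your $\mu(t)$, but the mechanism is identical.
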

The proof of Theorem~\ref{mainthm} is presented in the end of Section~\ref{Global-est}. 
Theorem~\ref{mainthm} indicates that the gradient blow-up solutions, evolving from smooth initial data close to $\UU$ in $C^4$-topology, form $C^{3/5}$ cusps as the first singularity in finite time, i.e., $u(\cdot, T_\ast)\in C^{3/5}$ at the moment $T_\ast$ of the blow-up. 
%
%
%
%
%
%
%
%
It is easy to see from \eqref{blow-up-result} that $\partial_x u$ becomes unbounded, while $u$ remains bounded, i.e., $\| u (\cdot, t) \|_{L^\infty} \lesssim \| u(\cdot, t) \|_{H^1} = \| u_0 \|_{H^1} <\infty$ for all $t<T_\ast$, due to the invariant integral \eqref{Hamiltonian}. It is analogous to \emph{pre-shocks} emerging in the Burgers equation. 
However, the nature of the singularity differs from that of the Burgers equation, which is a cubic-root singularity, i.e., $C^{1/3}$. 
This is a new type of singularity that has not been observed in any partial differential equations arising from water waves or fluid dynamics. It is a remarkable feature of \eqref{CH_main}, since nonlinear transport-type equations, including the compressible Euler equations \cite{BSV, BSV2}, the Euler-Poisson system \cite{BKK2, BKK} and some perturbations of the Burgers equation \cite{OP}, are proved to exhibit the same types of singularities as the Burgers equation. 
In fact, thanks to
the invariant integral \eqref{Hamiltonian},  
one can infer that the H\"older regularity of the pre-shocks, if occurs, must be better than $C^{1/2}$, which simply excludes the possibility of $C^{1/3}$.

Note that \eqref{CH_main} can be written in the form of a hyperbolic-elliptic system with its transport part being exactly the Burgers equation: 
	\begin{subequations}\label{CH}
		\begin{align}
			& u_t + u u_x = -p_x , \label{CH_1} 
			\\ 
			& p - p_{xx} =  u^2 + \frac{1}{2} (u_x)^2 + 2 \gamma u, \label{CH_2} 
		\end{align}
	\end{subequations}  
	where $p$ is the (dimensionless) pressure or surface elevation. 
	A key observation is that the precise leading-order correction to \eqref{CH} in the blow-up regime is more akin to 
	\begin{equation}\label{H-S}
		v_{xt} + v v_{xx} + \frac12 v_x^2 =0 
	\end{equation}
 than the Burgers equation. 
It is through our construction of new self-similar blow-up profiles $\{\UUb(y) : \beta>0\}$ to \eqref{H-S}, that we are able to identify the sharp Hölder regularity of the singularities and capture the detailed blow-up dynamics. 

\subsection{Blow-up for the Hunter-Saxton equation} 
We also consider the so-called Hunter-Saxton (HS) equation \eqref{H-S}. 
It is an integrable partial differential equation arising in the theoretical study of nematic liquid crystals, see \cite{HS}.  
As discussed earlier, \eqref{H-S} turns out to be the \emph{leading}-order correction to \eqref{CH} in the blow-up regime.
On the other hand, \eqref{H-S} can be derived from the CH equation \eqref{CH_main} in the \emph{high frequency limit}, \cite{HZ}. We also refer to \cite{BC-new, BHR-new, GLW-new, YY-new, Y-new} for various studies on the HS equation, including global existence, blow-up phenomena and the regularity structure of solutions.
%
%
%
%
%
%
%
%
%

We consider the initial value problem \eqref{H-S} with the initial data $v(\cdot, -1) =: v_0(\cdot) \in H^5(\mathbb{R})$ on $\mathbb{R}\times [-1, \infty)$. 
	We assume that $\partial_x v_0$ attains its global non-degenerate minimum at $x=0$ such that  
	\begin{equation}
		\partial_x v_0(0) <0, \quad \partial_x^2 v_0(0)=0, \quad \partial_x^3 v_0(0) >0. \label{init_v}
	\end{equation}
\newcommand{\alphav}{k_v}
Letting 
\begin{equation*}
	\betav :=    \frac{ \partial_x^3 v_0(0) }{ 128 (-\partial_x v_0(0) ) }>0, \quad \alphav := - \frac{\partial_x v_0(0) }{2}>0,
\end{equation*} 
	  we assume that  
	  	\begin{equation*}
	  	\left|  \partial_x v_0 (x)- \alphav \overline{U}'_{\betav } \left( x \right)\right|\leq \alphav \min\left\{\frac{\betav {x}^2}{3000(1+\betav x^2 ) }, \frac{\Theta}{1+\betav^{1/5} x^{2/5}}\right\}
	  \end{equation*}
%
%
	for all $x\in\mathbb{R}$, where $\overline{U}_{\betav}$ is the smooth solution of \eqref{Ueq-int} satisfying \eqref{U0}--\eqref{UU'-decay}, and $\Theta>0$  
	is any number such that $\textstyle 50^{-1/5} ( \approx 0.457 ) <  \Theta <  \frac{6}{13}$ as in \eqref{theta-val}.  
	We also assume that 
		\begin{equation}\label{4.3a-v2}
		\lim_{|x|\rightarrow \infty}|x^{2/5}\partial_x v_0(x)|\leq \frac{\alphav \theta }{2\betav^{1/5}}, 
\end{equation}
where $\textstyle \theta := \frac13 \left( \frac{6}{13} - \Theta\right)>0$.

We then obtain a blow-up result similar to Theorem~\ref{mainthm} for \eqref{H-S}, without any largeness condition on the initial gradient. 
\begin{theorem}\label{thm-HS}
Let  $v_0\in H^5(\mathbb{R})$. Suppose that $\partial_x v_0(x)$ attains its global non-degenerate negative minimum at $x=0$, and that $v_0$ satisfies \eqref{init_v}--\eqref{4.3a-v2}. 
Then the initial value problem \eqref{H-S} on $\mathbb{R}\times [-1, \infty)$ with the initial data $v_0(x), x\in \mathbb{R}$, admits a unique solution $v  \in C([-1, T_\ast) ; H^5( \mathbb{R})) \subset C^4  \left([-1, T_\ast) \times \mathbb{R}\right)$, which blows up at $T_\ast := -1+ \alphav^{-1}$.  
Furthermore, there exists a blow-up location $x_\ast\in\mathbb{R}$ such that for any open bounded subsets $\Omega\ni x_\ast$ and $\overline{\Lambda} \not\owns x_\ast$ of $\mathbb{R}$, 
it holds that 
	\begin{enumerate}[(i)]
		\item $\sup_{t< T_\ast } \left[ v(\cdot, t) \right]_{C^{\alpha}(\Omega)}<\infty
		 \quad \text{for}\quad \alpha\leq 3/5$;
		\item $\lim_{t\nearrow  T_\ast } \left[ v (\cdot, t) \right]_{C^{\alpha}(\Omega)}=\infty $
		$\quad \text{for}\quad \alpha> 3/5$;
		\item for $\alpha>3/5$, the temporal blow-up rate is given by 
		\ \begin{equation*} \left[ v(\cdot, t) \right]_{C^\alpha(\Omega)}\sim ( T_\ast -t)^{-\frac{5\alpha-3}{2}}
		\end{equation*} for all $t$ sufficiently close to $T_\ast$; 
		\item $\sup_{t< T_\ast } \left[ v(\cdot, t) \right]_{C^{\alpha}(\Lambda)}<\infty$ for any $\alpha\in(0,1]$. 
	\end{enumerate} 
\end{theorem}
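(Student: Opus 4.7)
My plan is to reduce Theorem~\ref{thm-HS} to a stability analysis of exact self-similar solutions of \eqref{H-S}. A direct substitution shows that
\[
 v_{\mathrm{ss}}(x,t) := (T_\ast - t)^{3/2}\, \overline{U}_{\betav}\!\bigl(x/(T_\ast-t)^{5/2}\bigr)
\]
solves \eqref{H-S} exactly, because \eqref{Ueq-int} was derived from precisely this ansatz. This dictates the rescaled variables
\[
 s = -\log(T_\ast - t), \qquad y = \bigl(x - \xi(s)\bigr)(T_\ast - t)^{-5/2}, \qquad W(y,s) := (T_\ast - t)\,\partial_x v(x,t),
\]
in which the target profile is $\overline{U}'_{\betav}(y)$ and $\xi(s)$ is a modulation parameter tracking the steepest descent of $v_x$. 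A short computation recasts \eqref{H-S} as a transport-reaction equation for $W$ whose only $s$-stationary odd solution matching the normalization at $y=0$ is $\overline{U}'_{\betav}$.

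Before attacking the stability, I would invoke standard $H^5$ local well-posedness for \eqref{H-S} and exploit the characteristic Riccati identity $(\partial_t + v\partial_x)v_x = -\tfrac12 v_x^2$. Integrating along the characteristic $X(t)$ with $X(-1)=0$ gives $v_x(X(t),t) = -2/(T_\ast - t)$ with $T_\ast = -1 + \alphav^{-1}$; this identifies the blow-up time exactly and locates the blow-up at $x_\ast := X(T_\ast)$. For any characteristic starting where $v_x(\cdot,-1) > -2\alphav$ strictly, the same Riccati ODE yields a uniform upper bound on $|v_x|$ up to time $T_\ast$, so $v$ stays $C^1$ on any closed set $\overline{\Lambda} \not\owns x_\ast$, giving item~(iv) immediately.

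The core of the argument is a bootstrap/modulation argument in the self-similar variables. I would impose the gauge conditions $W(0,s) = -2$ and $W_y(0,s) = 0$, which determine $\xi(s)$ and pin $W$ at the origin to the normalization of $\overline{U}'_{\betav}$ via \eqref{U0}. Setting $\widetilde{W} := W - \overline{U}'_{\betav}$, the goal is to propagate, globally in $s \geq -\log(1+\veps)$, the two-sided pointwise bound
\[
 |\widetilde{W}(y,s)| \;\lesssim\; \min\!\left\{ \frac{\betav\, y^2}{1 + \betav\, y^2},\; \frac{\Theta}{1 + \betav^{1/5}\, |y|^{2/5}} \right\},
\]
which is exactly the initial hypothesis on $v_0$. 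The inner weight captures the $C^3$ Taylor matching at the origin (\eqref{U0}), while the outer weight matches the profile's own $|y|^{-2/5}$ decay (\eqref{UU'-decay}). Propagation is by the maximum principle along the self-similar transport characteristics of the linearized operator, with nonlinear terms $\widetilde{W}^2$ absorbed using the strict margin $\theta>0$ from \eqref{theta-val}--\eqref{theta}.

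Once $\widetilde{W}$ is controlled globally in $s$, one translates back to physical variables. Integrating $v_x(x,t) = (T_\ast - t)^{-1}\bigl[\overline{U}'_{\betav}(y) + \widetilde{W}(y,s)\bigr]$ from $x_\ast$ and using \eqref{UU'-decay} yields $v(x, T_\ast) - v(x_\ast, T_\ast) \sim |x - x_\ast|^{3/5}$ as $x \to x_\ast$, establishing (i) and (ii). For $\alpha > 3/5$, the scaling computation $[v(\cdot,t)]_{C^\alpha(\Omega)} \sim (T_\ast-t)^{3/2-5\alpha/2} = (T_\ast-t)^{-(5\alpha-3)/2}$ follows by balancing the inner regime $|x-x_\ast| \lesssim (T_\ast-t)^{5/2}$ (where $v_x$ contributes $\sim (T_\ast-t)^{-1}$) against the outer regime (where the profile's $|y|^{3/5}$ growth dominates), giving (iii). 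The main technical obstacle is closing the outer weighted estimate: because $\overline{U}'_{\betav}$ itself only decays like $|y|^{-2/5}$, the weight on $\widetilde{W}$ is at the critical borderline, and the linearized transport coefficient $V + \tfrac52 y \approx \tfrac52 y$ produces only marginally convergent contributions at infinity; this is precisely why \eqref{theta-val} requires the strict inequality $\Theta < 6/13$, and handling this sharp outer bound will be the delicate part of the analysis.
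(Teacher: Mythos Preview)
Your proposal follows essentially the same route as the paper: self-similar variables, modulation tracking the steepest point, and a weighted bootstrap on the perturbation of $\overline{U}'_{\betav}$. Two points are worth flagging.

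First, your use of the characteristic Riccati identity $(\partial_t+v\partial_x)v_x=-\tfrac12 v_x^2$ to pin down $T_\ast$ and to obtain~(iv) is an HS-specific shortcut that the paper does not take; the paper instead derives $\dot\tau\equiv 0$ from the modulation ODE and obtains~(iv) from the bootstrap decay $|V_y(y,s)|\lesssim (1+|y|^{2/5})^{-1}$. Your route is more elementary, but as written it has a small gap: the Riccati ODE gives a bound on $|v_x|$ along each characteristic starting at $x_0\neq 0$, with constant depending on $\delta(x_0):=\partial_xv_0(x_0)+2\alphav>0$. To get a \emph{uniform} bound on $\overline{\Lambda}\not\owns x_\ast$ you must show that backward characteristics from $\Lambda$ land in a set bounded away from $0$. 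This follows from the explicit Jacobian $\partial_{x_0}X(t;x_0)=\bigl(1+\tfrac12(t+1)\partial_xv_0(x_0)\bigr)^2$, which near $x_0=0$ gives $X(T_\ast;x_0)-x_\ast\sim c\,x_0^5$; hence $|x_0|\gtrsim \operatorname{dist}(\Lambda,x_\ast)^{1/5}$ and the argument closes. You should spell this out.

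Second, your diagnosis of the delicate step in the outer bootstrap is slightly off. The marginal transport coefficient is not the real obstruction. Writing the equation for $\widetilde W=W-\overline{U}'_{\betav}$, the transport field is $V+\tfrac52 y=\overline{U}_{\betav}+\tfrac52 y+\widetilde V$, and the term $\widetilde V\,\overline{U}''_{\betav}$ with $\widetilde V(y,s)=\int_0^y\widetilde W\,dy'$ is \emph{nonlocal} in $\widetilde W$. After weighting by $1+|y|^{2/5}$ this becomes an integral kernel that competes with the damping at exactly the critical rate; the paper closes this via the pointwise inequality \eqref{num_1} comparing $(1+|y|^{2/5})|\overline{U}''|\int_0^{|y|}(1+|y'|^{2/5})^{-1}dy'$ against the linearized damping, and this comparison is the origin of the constant $6/13$ in \eqref{theta-val}. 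Your proposal should identify this nonlocal term explicitly, since absorbing $\widetilde W^2$ alone will not close the estimate. You will also need the auxiliary higher-derivative bootstrap bounds on $W_y,\,W_{yy},\,W_{yyy}$ (as in \eqref{EP2_1D3-p}--\eqref{EP2_1D4-p}) to control the inner region near $y=0$ where the damping degenerates.
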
 
 We note that, unlike in Theorem~\ref{mainthm}, which requires a large initial gradient $u_0(0)=-2\ve^{-1}$, i.e., $\ve\ll1$, Theorem~\ref{thm-HS} imposes no such requirement. 
 This difference arises from the fact that $\UUb$ are the exact self-similar blow-up profiles for \eqref{H-S}. The proof of Theorem~\ref{thm-HS} is similar to, yet simpler than, that of Theorem~\ref{mainthm}, which we outline only in Section~\ref{HS-proof}. 
 We observe that the HS equation \eqref{H-S} exhibits the scaling invariance, $v(x,t) \to  \mu \lambda^{-1} \tilde v({x /\mu}, {t/ \lambda})$ for $\lambda, \mu>0$. Using this symmetry, we can, without loss of generality, suppress $\alphav=1$, i.e., $\partial_x v_0(0) = -2$, for the initial value problem \eqref{H-S}--\eqref{4.3a-v2}. For simplicity, we outline the proof for the case $\alphav=1$ in Section~\ref{HS-proof}.

\emph{Outline of the paper:} In Section~\ref{BPA}, we construct stable self-similar blow-up profiles to HS equation \eqref{H-S} and examine their properties, including their asymptotic behavior at infinity.  
In Section~\ref{Global-est}, using the bootstrap argument, we establish global stability estimates for the blow-up solutions in the self-similar variables. Utilizing these estimates, we prove Theorem~\ref{mainthm} at the end of this section. Section~\ref{sec3-boot} is devoted to closing the bootstrap assumptions, which are used to derive the global stability estimates. In Section~\ref{HS-proof}, we present a similar blow-up result for the Hunter-Saxton equation, stated in Theorem~\ref{thm-HS}, and outline its proof. In the Appendix, we provide several estimates for the self-similar blow-up profiles we construct and other supplementary materials used throughout our analysis. 

	\section{Construction of self-similar blow-up profiles}\label{BPA}
	%
In this section, we construct the stable self-similar blow-up profiles and examine their properties, such as their asymptotic behavior at infinity.
	In fact, \eqref{Ueq-int} is the blow-up profile equation in the self-similar variables, and it can be obtained by considering a self-similar solution of the form 
	\begin{equation*}
		v(x,t) = (-t)^{3/2} \UU(y), \quad y:= \frac{x}{(-t)^{5/2}}
	\end{equation*} to \eqref{H-S}. 
	To see this, we define the self-similar variable and unknown function $\UU$ as 
	\begin{equation}\label{self-sim} y:= \frac{x}{(-t)^b},  \quad u(x,t) =: (-t)^a \UU\left(\frac{x}{(-t)^b}\right)
	\end{equation}
	for some $a,b\in \mathbb{R}$. 
	Then a necessary condition that \eqref{H-S} admits a self-similar solution of the form    \eqref{self-sim}, is that $a=b-1$, under which $\UU(y)$ solves  
	\begin{equation}\label{Ueq-alpha}
		\left(1+\frac{1}{2}\UU'\right)\UU'+\left(\UU+\ (a+1) y\right)\UU''=0, \quad y\in\mathbb{R}. 
	\end{equation}
	We also find that $a=3/2$ is the only choice in order for  \eqref{Ueq-alpha} to admit a \emph{smooth and odd} global solution for all $y\in\mathbb{R}$. 
	In view of the odd symmetry, we assume that $\UU(0) =0$. 
From now on, we consider the case where $a = 3/2$ and $b = 5/2$, i.e., the self-similar variable and the corresponding self-similar solution take the form: 
	\begin{equation}\label{self-sim-spec}
		y:= \frac{x}{(-t)^{5/2}},  \quad u(x,t) =: (-t)^{3/2} \UU\left(\frac{x}{(-t)^{5/2}}\right).
	\end{equation}
	Now, we discuss the existence of a smooth solution $\UU(y)$ of
	\begin{equation}\label{Ueq}
		\left(1+\frac{1}{2}\UU'\right)\UU'+\left(\UU+\frac{5}{2}y\right)\UU''=0 
	\end{equation}
	satisfying the growth condition 
	\begin{equation}\label{decay-infty}
		| \UU(y)|\le C |y|^\alpha, \quad |y|\ge1 
	\end{equation} 
	for some $\alpha\in[0,1]$. 
	

\begin{proposition}\label{Profile-construct}
		The ODE problem \eqref{Ueq}--\eqref{decay-infty} admits a one-parameter family of the smooth solutions $\{ \UUb(y) : \beta>0 \}$ such that 
		for each $\beta>0$, 
		$\UUb(y)$ is monotonically decreasing on $\mathbb{R}$ and is odd, i.e., $\UUb(-y) = - \UUb(y)$ for all $y\in\mathbb{R}$. 
		Furthermore, it holds that  
\begin{equation}\label{U_taylor_small}
		\overline{U}_\beta (y) = -2y+\frac{128 \beta}{3} y^3+\mathcal{O}(y^5) \quad\text{for }|y|\ll 1 ,
	\end{equation}
	and
	\begin{equation} \label{U_taylor_far}
		\overline{U}_\beta (y) = -\frac{5}{3(50\beta)^{1/5}}y^{3/5}+ o(y^{3/5}) \quad \text{for } |y|\gg 1.
	\end{equation}
In particular, it holds that 
\begin{equation}\label{UU-0}
	\UU_\beta(0) = 0, \quad \UU_\beta'(0) =-2, \quad \UU_\beta''(0) = 0, \quad\UU_\beta^{(3)} (0) = 256 \beta, 
\end{equation} 
and as $|y|\rightarrow \infty$,
\begin{equation}\label{asymp-y-infty}
	|y|^{-3/5} | \UU_\beta(y) | \to \frac{5}{3}(50\beta)^{-1/5}, \quad |y|^{2/5} | \UU_\beta'(y) | \to (50\beta)^{-1/5}, \quad |y|^{7/5} | \UU_\beta''(y) | \to \frac{2}{5}(50\beta)^{-1/5}. 
\end{equation} 
\end{proposition}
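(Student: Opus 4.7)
\emph{Plan.} The idea is to reduce \eqref{Ueq} to an explicit algebraic conservation law in the variables $(\Phi,\Psi):=(U', U+\tfrac{5}{2}y)$, invoke the implicit function theorem to resolve the singularity at $y=0$, and then extract the global and asymptotic behavior by tracking signs and leading orders on the resulting level curve.

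First, writing \eqref{Ueq} as $\Phi(\Phi+2)+2\Psi\Phi'=0$ with $\Psi'=\Phi+\tfrac52$ and eliminating $dy$ gives the separable ODE $d\Phi/d\Psi=-\Phi(\Phi+2)/[\Psi(2\Phi+5)]$. The partial-fraction identity
\[
\frac{2\Phi+5}{\Phi(\Phi+2)}=\frac{5}{2\Phi}-\frac{1}{2(\Phi+2)}
\]
integrates this to the first integral
\[
K\;:=\;\frac{(U+\tfrac{5}{2}y)^{2}\,(U')^{5}}{U'+2}\;=\;\text{const along trajectories.}
\]
Matching at $y=0$ with the data $\Psi\sim y/2$, $\Phi+2\sim 128\beta y^{2}$, $\Phi^{5}\sim-32$ gives $K=-1/(16\beta)$, so $\beta>0$ is exactly the free parameter of the family.

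Next, I construct the solution by the IFT applied to $G(\Phi,\Psi):=\Psi^{2}\Phi^{5}+(\Phi+2)/(16\beta)$. Since $\partial_{\Phi}G=5\Psi^{2}\Phi^{4}+1/(16\beta)>0$ on all of $\mathbb{R}^{2}$, the level set $\{G=0\}$ is the graph of a globally defined smooth function $\hat{\Phi}:\mathbb{R}\to\mathbb{R}$ with $\hat{\Phi}(0)=-2$; it is even in $\Psi$ because $G$ is, and admits the local expansion $\hat{\Phi}(\Psi)=-2+512\beta\Psi^{2}+O(\Psi^{4})$. The nondegenerate ODE $\Psi'(y)=\hat{\Phi}(\Psi)+\tfrac52$ with $\Psi(0)=0$ (right-hand side $=1/2$ at $\Psi=0$) has a unique smooth solution by standard theory, and this solution is odd by the uniqueness and the symmetry $y\to-y$; setting $U(y):=\Psi(y)-\tfrac{5}{2}y$ yields the desired smooth odd solution of \eqref{Ueq}, and matching Taylor coefficients produces \eqref{U_taylor_small} and \eqref{UU-0}. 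Since $G(0,\Psi)=1/(8\beta)\neq0$, continuity forces $\hat{\Phi}(\Psi)\in(-2,0)$ for $\Psi\neq0$, so $\Psi'(y)\geq1/2$ on $[0,\infty)$, giving $\Psi(y)\to\infty$, precluding finite-time blow-up, and implying both monotonicity $U'<0$ on $\mathbb{R}\setminus\{0\}$ and the global minimum $U'(0)=-2$. Finally, substituting $\Psi\sim5y/2$ and $\Phi+2\to2$ into $K=-1/(16\beta)$ yields $\Phi^{5}\sim-1/(50\beta y^{2})$, hence $U'(y)\sim-(50\beta)^{-1/5}y^{-2/5}$; integrating and differentiating produce \eqref{U_taylor_far} and the $U,U''$ asymptotics in \eqref{asymp-y-infty}.

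\emph{Main obstacle.} The principal difficulty is the simultaneous vanishing of $(1+U'/2)$ and $(U+\tfrac{5}{2}y)$ at $y=0$, which makes \eqref{Ueq} singular there and leaves $U^{(3)}(0)$ as a free parameter, so standard ODE existence does not apply directly. The conserved quantity $K$ encodes this freedom precisely through the bijection $\beta=-1/(16K)$, and the IFT applied to $G$ converts the degenerate equation for $U$ into the nonsingular equation for $\Psi$, which is the crux of the argument; given this, the global continuation and asymptotics reduce to sign-tracking and a leading-order expansion on the conservation law.
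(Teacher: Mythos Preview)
Your proof is correct and follows essentially the same route as the paper: the substitution $\Psi=U+\tfrac52 y$ (the paper's $W$), the first integral $\Psi^{2}\Phi^{5}/(\Phi+2)=-1/(16\beta)$ (which coincides with the paper's relation $\beta W^{2}=(2V-1)/(5-2V)^{5}$ after $V=\Phi+\tfrac52$), the global resolution of $\Phi$ as a smooth function of $\Psi$ via $\partial_{\Phi}G>0$, and the reduction to the nonsingular ODE $\Psi'=\hat\Phi(\Psi)+\tfrac52$ with $\Psi'(0)=1/2$ are exactly the paper's steps in different notation. One minor caveat: ``differentiating'' the asymptotic for $U'$ to obtain the $U''$ limit in \eqref{asymp-y-infty} is not a priori justified; you should instead read it off from the ODE itself via $U''=-U'(U'+2)/(2\Psi)$ together with $\Psi\sim\tfrac52 y$, as the paper effectively does.
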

\begin{proof}
First we prove the existence of smooth solutions. Since equation \eqref{Ueq} admits odd symmetry, we consider the problem \eqref{Ueq}-\eqref{decay-infty} on the half-line $\{y \in \mathbb{R} : y \geq 0 \}$. 
For simplicity, we set $W (y) := \UU(y) + 5y/2$, then \eqref{Ueq} is reduced to the following \emph{autonomous} ODE:
\begin{equation}\label{Weq}
	\left( W'-\frac12 \right)\left(W'-\frac{5}{2}\right)+ 2 W W''=0.
\end{equation}
We seek a solution $W$ of the form $W'=V(W)$ for some function $V=V(W)$. 
Then using the relation $
\textstyle W''=V\frac{dV}{dW}$, one has 
\begin{equation*}
	\left(V-\frac{1}{2}\right) \left(V-\frac{5}{2}\right)+ 2 WV\frac{dV}{dW}=0.
\end{equation*}  
Then using the method of separation of variables, we obtain 
\begin{equation}\label{CW2}
	\beta W^2=\frac{2V-1 }{(5-2V)^5}, 
\end{equation}	
where $\beta>0$ is a constant of integration. 
We can readily check that for each $\beta>0$, \eqref{CW2} defines a smooth global function $\Vb=\Vb(W)$ for $W\in(-\infty, \infty)$ such that  
\begin{equation}\label{Vbound}
	\frac{1}{2}\leq \Vb(W)\leq \frac{5}{2}, \quad \lim_{|W|\rightarrow \infty}\Vb(W)=\frac{5}{2}, \quad \Vb(0) = \frac12.
\end{equation}
On the other hand, from \eqref{CW2}, we find that
\begin{equation*}
	\frac{d\Vb}{dW}=\frac{\beta W(5-2\Vb)^6}{8\Vb}.
\end{equation*}
By this together with \eqref{Vbound}, we see that $d\Vb/dW$ is smooth and  uniformly bounded  for all $W\in\mathbb{R}$. 
Hence, by a standard ODE theory, for each $\beta>0$, the initial value problem  $W'=V(W), W(0)=0, y\ge0$ admits a unique global smooth solution $W=\Wb(y)$ for all $y \ge 0$. Since \eqref{Weq} admits  odd symmetry,  the odd extension $W=\Wb(y)$ is a global solution to \eqref{Weq} for all $y\in\mathbb{R}$, in turn $\UUb(y):= \Wb(y) -5y/2$ is an odd global solution to \eqref{Ueq}. 	
		That is, 
		\begin{equation} \label{Ub-eq}
			\left(1+\frac{1}{2}\UUb'\right)\UUb'+\left(\UUb+\frac{5}{2}y\right)\UUb''=0. 
		\end{equation}

%

Now, we show \eqref{U_taylor_small} and \eqref{U_taylor_far}, which describe the asymptotic behaviors of $\UUb(y)$ near $y = 0$ and $y = \infty$, respectively. 
First we can check that $ \UUb(y) := \Wb(y) - {5y/2}$   
satisfies
\begin{equation}\label{UU-0-pf}
	\begin{split}
		&\UUb(0)=\Wb (0)=0,
		\\
		&\UUb'(0) = \Wb'(0) -\frac52 = -2,  
		\\
		&\UUb''(0) = \Wb''(0) = 0, 
		\\
		&\UUb^{(3)} (0) = 256 \beta. 
	\end{split}
\end{equation}
Here, we used the fact that $\Wb$ is odd so that $\Wb^{(2i)}(0)=0$, $i\in \mathbb{N}$ for the first and third equalities, and used \eqref{CW2} for the second equality. By the Taylor expansion and \eqref{CW2}, we get the last equality. 
Then using \eqref{UU-0-pf}, we obtain the desired Taylor expansion for $\UU_\beta(y)$, $|y|\ll1$. 
This proves \eqref{U_taylor_small}. 
%

Now we prove \eqref{U_taylor_far}, i.e., the estimate of $\overline{U}_\beta$ for $|y|\gg 1$. 
From 
\eqref{Vbound}, one has  
\begin{equation}\label{Wbar_neg}
	-2 \le \UUb'(y) \le 0, \quad  y \in \mathbb{R},
\end{equation}
by which together with the fact that $\UUb(0) =0$, we have crude bounds:
\begin{equation}\label{U-rough}  
	y\UUb(y) \le 0, \quad | \UUb(y) | \le 2 |y|, 
 \quad y\in\mathbb{R}.
\end{equation}

Thanks to the odd symmetry of $\UUb(y)$, we can, without loss of generality, restrict our consideration to $y \geq 0$. 
 Using \eqref{CW2}, we have  
\begin{equation}\label{3.4}
	\UUb+\frac{5}{2}y= \left(\frac{2+\overline{U}_\beta'}{16\beta (-\overline{U}_\beta')^5}\right)^{1/2}, 
\end{equation}
where the right-hand side is well-defined thanks to \eqref{Wbar_neg}. 
By substituting \eqref{3.4} into \eqref{Ub-eq}, we have 
\begin{equation}\label{U''_TEMP}
	\overline{U}_\beta''=2\sqrt{\beta}(2+\overline{U}_\beta')^{1/2}(-\overline{U}_\beta')^{7/2}.
\end{equation}
From \eqref{U''_TEMP}, we apply the method of separation of variable to have
\begin{equation}\label{U0FAR}
	-y^{2/5}\overline{U}_\beta'=(30\sqrt{\beta})^{-2/5}(2+\overline{U}_\beta')^{1/5}(2(\overline{U}_\beta')^2-2\overline{U}_\beta'+3)^{2/5}. 
\end{equation}
Then from \eqref{3.4} and \eqref{U0FAR}, we have 
\begin{equation*}
	\begin{split}
		\overline{U}_\beta&=-\frac{5}{2}y+\frac{(2+\overline{U}_\beta')^{1/2}}{4\sqrt{\beta}(-\overline{U}_\beta')^{5/2}}=\frac{-10\sqrt{\beta}(-y^{2/5}\overline{U}_\beta')^{5/2}+(2+\overline{U}_\beta')^{1/2}}{4\sqrt{\beta}(-\overline{U}_\beta')^{5/2}}
		=-\frac{1}{6\sqrt{\beta}}\frac{(2+\overline{U}_\beta')^{1/2}(1-\overline{U}_\beta')}{(-\overline{U}_\beta')^{3/2}}.
	\end{split}
\end{equation*}
By this and \eqref{U0FAR}, we obtain
\begin{equation}\label{Ubar35}
	\begin{split}
		\frac{\overline{U}_\beta}{y^{3/5}}&=-\frac{1}{6\sqrt{\beta}}\frac{(2+\overline{U}_\beta')^{1/2}(1-\overline{U}_\beta')}{(-y^{2/5}\overline{U}_\beta')^{3/2}}
		=-\frac{(30\sqrt{\beta})^{3/5}}{6\sqrt{\beta}}\frac{(2+\overline{U}_\beta')^{1/5}(1-\overline{U}_\beta')}{(2(\overline{U}_\beta')^2-2\overline{U}_\beta' + 3)^{3/5}}.
	\end{split}
\end{equation}
Similarly, by \eqref{U''_TEMP} and \eqref{U0FAR}, we get
\begin{equation}\label{Ubar75}
	y^{7/5}\overline{U}_\beta''=\frac{1}{15(30\sqrt{\beta})^{2/5}}(2+\overline{U}_\beta')^{6/5}(2(\overline{U}_\beta')^2-2\overline{U}_\beta'+3)^{7/5}.
\end{equation}
From \eqref{Vbound} and the fact that $\textstyle \UUb'(y) = W'(y) -\frac{5}{2} = V(W(y)) - \frac{5}{2}$, we note that $\textstyle\lim_{|y|\rightarrow \infty}|\overline{U}_\beta'(y)|=0$. 
Using this for  \eqref{U0FAR}, \eqref{Ubar35} and \eqref{Ubar75}, we obtain \eqref{asymp-y-infty}, consequently \eqref{U_taylor_far}.
  This completes the proof. 
\end{proof}



Next, we present some inequalities involving $\UUb$ with $\beta=1$ that are used in the proofs in Section~\ref{sec3-boot}.
\begin{lemma}\label{lem-ubar}
Let $\UU(y) := \overline{U}_\beta(y)$, $\beta=1$ be the solution of the ODE problem \eqref{Ueq}--\eqref{decay-infty}. Then it holds that  
\begin{subequations}
	\begin{align}
		&2+\overline{U}'(y) - \frac{6 y^2}{5(1+y^2)}\geq 0, \quad y\in\mathbb{R},  \label{num_4}
		\\
		&1+\overline{U}'(y) + \frac{2}{1+y^2} \left(\frac{5}{2}+\frac{\overline U(y) }{y}\right)\geq \frac{y^2}{5(1+y^2)}, \quad y\in\mathbb{R}, \label{num_2}
		\\
		&\frac{7}{2}+2\overline{U}'(y) + \frac{1}{1+y^2}\left(\frac{5}{2}+\frac{\overline{U}(y) }{y}\right) \geq \frac{19y^2}{10(1+y^2)}, \quad y\in\mathbb{R}. \label{num_3}
		\end{align}
\end{subequations}
Moreover, there exist $\lambda>1$ and $\delta\in(0,1)$ such that 
\begin{equation}
		  \delta \left(1+\overline{U}' (y) +\frac{2}{1+y^2}\left(\frac{5}{2}+\frac{\overline{U}(y) }{y}\right)-\frac{y^2}{500(1+y^2)}\right) \geq \lambda|\overline{U}''(y)|\frac{y^2+1}{y^2}\int^{|y|}_0{\frac{y'^2}{1+y'^2 }\,dy'}, \quad y \in \mathbb{R}, \label{num_6}
\end{equation}
and 
\begin{multline}
		\frac{10}{13(1+y^{2/5})}+\overline{U}'(y) - \frac{2y^{2/5}}{5(1+y^{2/5})}\left(\frac{\overline{U}(y) }{y}+\frac{6}{13y}\int^y_0\frac{dy'}{1+(y')^{2/5}}\right) \label{num_1}
		\\
		  \geq \lambda(y^{2/5}+1)|\overline{U}''(y)|\int^{|y|}_0\frac{dy'}{1+(y')^{2/5}}, 
\quad |y|\geq m_0 
	\end{multline}
for some constant $m_0>0$.
 In fact, for $\lambda =1.0001$, we manually verify that \eqref{num_1} holds with $m_0 = 2 \cdot 10^6$, and we also confirm numerically that $m_0=93$ is sufficient.   
\end{lemma}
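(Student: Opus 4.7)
The plan is to exploit the closed-form first integrals for $\overline{U} := \overline{U}_1$ derived in the proof of Proposition~\ref{Profile-construct}. Introducing $p(y) := 2+\overline{U}'(y) \in [0,2]$, equations \eqref{CW2}, \eqref{3.4}, \eqref{U''_TEMP} specialized to $\beta=1$ give the implicit parametrization
\[
y^2 \;=\; \frac{1}{900}\,\frac{p(2p^2-10p+15)^2}{(2-p)^5},
\qquad
|\overline{U}''(y)| \;=\; 2\,p^{1/2}(2-p)^{7/2},
\]
together with a closed-form expression for $\overline{U}(y)+\tfrac{5}{2}y$ in terms of $p$. Since $p$ is an increasing function of $|y|$ on $(0,\infty)$ with $p(0)=0$ and $p(\infty)=2$, each of the algebraic inequalities \eqref{num_4}--\eqref{num_3} can be recast, after clearing denominators, as a rational inequality in the single variable $p\in[0,2]$.

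For \eqref{num_4}, the claim $p \ge \tfrac{6y^2}{5(1+y^2)}$ is equivalent to $5p(1+y^2) \ge 6y^2$, which becomes, after substitution of the formula for $y^2$, a polynomial inequality in $p$ alone. Its endpoint behavior matches the Taylor expansion \eqref{U_taylor_small} at $p=2$ (leading quadratic coefficient $128-6/5>0$) and the far-field asymptotic \eqref{asymp-y-infty} at $p=0$ (slack $4/5$), and the interior is then controlled by elementary sign analysis of the resulting polynomial factor. The same strategy, using in addition the parametrization of $\overline{U}(y)/y$, handles \eqref{num_2} and \eqref{num_3}.

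Inequalities \eqref{num_6} and \eqref{num_1} involve the primitives $I_2(y):=\int_0^{|y|}\tfrac{(y')^2}{1+(y')^2}\,dy' = |y|-\arctan|y|$ and $I_{2/5}(y):=\int_0^{|y|}\tfrac{dy'}{1+(y')^{2/5}} = \tfrac{5}{3}|y|^{3/5}-5|y|^{1/5}+5\arctan(|y|^{1/5})$, which do not reduce to $p$ alone, so I would proceed via a two-region argument. For $|y|\ge m_0$ large, the parametrization gives $2-p \sim (50)^{-1/5}|y|^{-2/5}$, and each side of the inequality can be expanded in powers of $|y|^{-2/5}$. For \eqref{num_1}, a direct computation yields the leading coefficient of $|y|^{-2/5}$ on the LHS,
\[
\frac{10}{13}-(50)^{-1/5} - \tfrac{2}{5}\Bigl(\tfrac{10}{13}-\tfrac{5}{3}(50)^{-1/5}\Bigr) \;=\; \tfrac{6}{13}-\tfrac{1}{3}(50)^{-1/5} \;\approx\; 0.3091,
\]
while on the RHS (with $\lambda$ close to $1$) it is $\tfrac{2}{3}(50)^{-1/5}\approx 0.3049$; the strict gap $\approx 0.004$ absorbs $\lambda=1.0001$, and explicit bounds on the subleading corrections produce the threshold $m_0$. (Note the gap exists precisely because $\tfrac{6}{13}>(50)^{-1/5}$, which is exactly \eqref{theta-val}.) An analogous leading-order computation, using the freedom to choose $\delta<1$, handles \eqref{num_6}. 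For $|y|\le m_0$, the inequality is verified on a fine grid by computing $\overline{U}$ from the ODE \eqref{Weq} with interval-arithmetic error control.

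The main obstacle is the tightness of this margin for \eqref{num_1}: a gap of only a few thousandths between the leading coefficients forces the subleading corrections in the expansions of $p(y)$ and of $I_{2/5}(y)$ to be tracked with unusual care, and hence $m_0$ to be large — the authors' manual calculation gives $m_0=2\cdot 10^6$, with a finer numerical scan giving $m_0=93$. For \eqref{num_6} the analogous tight competition is between the penalty $\tfrac{y^2}{500(1+y^2)}$ and $|\overline{U}''|\tfrac{y^2+1}{y^2}I_2(y)$ in the moderate-$y$ regime, which is handled by monotonicity in the parametrization. Once this quantitative control is in place, the two-region argument closes and the lemma follows.
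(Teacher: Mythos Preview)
Your parametrization strategy via $p=2+\overline{U}'$ is the paper's tool, and your asymptotic computation for \eqref{num_1} reproduces Lemma~\ref{5.4} exactly (limit $\tfrac{6}{13}-\tfrac{1}{3\cdot 50^{1/5}}-\tfrac{2\lambda}{3\cdot 50^{1/5}}$). The execution elsewhere differs. For \eqref{num_4} the paper does not reduce to a polynomial in $p$; it differentiates the claim and proves $\overline{U}''(y)\ge \tfrac{12y}{5(1+y^2)^2}$ on the range $\overline{U}'\in[-2,-4/5]$ (the complementary range being trivial) via a one-line monotonicity check in $\overline{U}'$. Then \eqref{num_2}--\eqref{num_3} follow immediately by integrating \eqref{num_4} to $\overline{U}(y)\ge -\tfrac45 y-\tfrac65\arctan y$ and substituting---no further parametrization needed. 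The main divergence is \eqref{num_6}: rather than a grid/interval-arithmetic step, the paper bounds the primitive crudely by $\int_0^{|y|}\tfrac{(y')^2}{1+(y')^2}\,dy'\le |y|$ when $\overline{U}'\in[-\tfrac12,0]$ and by $\le |y|^3/3$ when $\overline{U}'\in[-2,-\tfrac12]$, after which both sides become closed-form functions of $X=\overline{U}'$ alone and the proof is fully analytical (Lemma~\ref{5.3}). The same device---replacing $I_{2/5}(y)$ by $\tfrac53|y|^{3/5}$ minus an explicit nonnegative remainder---is what lets the paper verify $m_0=2\cdot10^6$ in \eqref{num_1} by hand rather than numerically. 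Your route would work, but it trades these elementary monomial bounds on the primitives for computer assistance.
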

\begin{proof}
For the proofs, we refer to Lemma~\ref{5.1} through Lemma~\ref{5.4} in the Appendix. 
In particular, the proof of \eqref{num_1} is given in Lemma~\ref{5.4}. We also provide numerical verification of \eqref{num_1} for $m_0 =93$ and $\lambda=1.0001$ in the Appendix. 
\end{proof}
	
%
%
%
	
	\section{Global stability estimates}\label{Global-est}
	In this section, we establish the global stability estimates in the self-similar time, of which we make use to prove Theorem~\ref{mainthm}.
\subsection{Preliminaries}
First 
%
%
%
we present a local existence theorem for the initial value problem \eqref{CH_main} with the initial data \eqref{in-H-C}. 
\begin{lemma}\label{criterion} 
	For the initial data \eqref{in-H-C}, there is $T\in (-\veps,\infty)$ such that the initial value problem \eqref{CH_main} admits a unique solution 
	$ u\in C([-\veps,T);  H^5(\mathbb{R}) )$.
	Suppose further that
	\begin{equation*}
		\lim_{t\nearrow T}\|u_x(\cdot, t)\|_{L^{\infty}}<\infty.
	\end{equation*}
	Then, the solution can be continuously extended beyond $t=T$, i.e., there exists a unique solution
	\begin{equation*}
		 u \in C([-\veps,T+\delta); H^5(\mathbb{R}) )
	\end{equation*}
	for some $\delta>0$.
\end{lemma}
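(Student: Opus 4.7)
\textbf{Proof plan for Lemma~\ref{criterion}.} The idea is to rewrite \eqref{CH_main} in the non-local transport form \eqref{CH_1}--\eqref{CH_2}, namely
\begin{equation*}
u_t + u u_x = F(u), \qquad F(u) := -\partial_x (1-\partial_x^2)^{-1} \Bigl[ u^2 + \tfrac12 u_x^2 + 2\gamma u \Bigr],
\end{equation*}
and treat it as a quasilinear transport equation with a smoothing nonlocal source. Since the kernel of $(1-\partial_x^2)^{-1}$ is $\tfrac12 e^{-|x|}$, the operator $\partial_x (1-\partial_x^2)^{-1}$ is bounded from $H^s(\RR)$ into $H^{s+1}(\RR)$, so $F:H^5\to H^5$ is a locally Lipschitz map. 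Local well-posedness in $H^5$ then follows from the standard Kato theory for quasilinear symmetric hyperbolic systems (see e.g.\ Kato 1975), or alternatively from a Picard iteration scheme combined with the flow-map construction along the characteristics of $u$. This produces the unique solution $u\in C([-\veps,T);H^5(\RR))$, and via Sobolev embedding $H^5\hookrightarrow C^4$ gives the stated regularity.

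For the continuation criterion, the plan is to establish a differential inequality of the form
\begin{equation*}
\frac{d}{dt} \| u(\cdot,t) \|_{H^5}^2 \;\le\; C \bigl( 1 + \| u_x(\cdot,t) \|_{L^\infty} \bigr) \| u(\cdot,t) \|_{H^5}^2,
\end{equation*}
valid on $[-\veps,T)$. Applying $\partial_x^k$ for $0\le k\le 5$ to the equation, pairing with $\partial_x^k u$ in $L^2$, and using the Kato--Ponce commutator estimate
\begin{equation*}
\bigl\| [\partial_x^k, u] \partial_x u \bigr\|_{L^2} \;\lesssim\; \| u_x \|_{L^\infty} \| u \|_{H^k} + \| u \|_{L^\infty} \| u \|_{H^{k+1}}\cdot \mathbf{1}_{k\ge 1}
\end{equation*}
controls the transport term; the source $F(u)$ is handled by the $H^5\to H^6$ smoothing of $\partial_x(1-\partial_x^2)^{-1}$ together with the Moser-type algebra estimate $\| u_x^2 \|_{H^5} \lesssim \| u_x \|_{L^\infty} \| u \|_{H^6} + \| u\|_{H^5}^2$ (note that the one extra derivative is absorbed by the smoothing). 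Under the hypothesis $\limsup_{t\nearrow T}\| u_x(\cdot,t)\|_{L^\infty}<\infty$, Grönwall's inequality then yields $\sup_{t<T}\| u(\cdot,t) \|_{H^5}<\infty$, so the local existence result can be reapplied at $t$ close to $T$ to extend $u$ to $[-\veps, T+\delta)$ for some $\delta>0$.

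The main obstacle is the careful commutator/Moser bookkeeping at the top order $k=5$: one must verify that no term requires more than one derivative beyond $H^5$, which is exactly what the smoothing of $(1-\partial_x^2)^{-1}$ provides, and that every estimate is linear in $\| u_x \|_{L^\infty}$ (so that only the $L^\infty$ norm of $u_x$, and not higher norms, is needed for the blow-up criterion). Since this is entirely standard for Camassa--Holm-type equations, I would simply cite the analogous arguments in \cite{CS} or \cite{Y} rather than reproduce the commutator calculations in full.
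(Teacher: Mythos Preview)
Your proposal is correct and follows essentially the same approach as the paper: the paper's proof is a two-sentence sketch that cites \cite{Y} for local existence via standard energy methods and then states the differential inequality $\frac{d}{dt}\|u\|_{H^k}^2 \le C(\|u_x\|_{L^\infty})\|u\|_{H^k}^2$ for $k\ge 2$, exactly as you derive. Your write-up simply fills in the details (the non-local reformulation, the smoothing of $\partial_x(1-\partial_x^2)^{-1}$, and the Kato--Ponce/Moser bookkeeping) that the paper omits.
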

\begin{proof}
The local existence can be proved by a standard energy method for the hyperbolic equations.  
	We refer to \cite{Y}, for instance.
	By revisiting the energy estimates for \eqref{CH_main}, for any $k\ge2$, 
	we can readily check  
	\begin{equation*}
		\frac{d}{dt} \|u\|^2_{H^{k}} \leq C(  \|u_x\|_{L^{\infty}} )  \|u\|^2_{H^{k}}, 
	\end{equation*}
	where $C=C(\|u_x\|_{L^{\infty}})>0$ is a constant depending on $ \|u_x\|_{L^{\infty}}$. 
This yields the desired  extension criteria.
\end{proof}
Note that by the Sobolev embedding and the structure of the CH equation \eqref{CH_main}, if $u \in C([-\veps,T); H^5(\mathbb{R}) )$, then $u \in C^4([-\veps,T)\times \mathbb{R})$.

\begin{remark}\label{Hamiltonian-rem}
	It is straightforward to check that \eqref{CH_main} possesses an invariant Hamiltonian integral. Multiplying \eqref{CH_main} by $u$, we obtain
	\begin{equation*}
		\frac12 \frac{d}{dt} \int_{\mathbb{R}}  u^2\,dx-\int_{\mathbb{R}}uu_{xxt}\,dx =-  \int_{\mathbb{R}} 3 u^2u_x\,dx + \int_{\mathbb{R}}2uu_xu_{xx}\,dx+\int_{\mathbb{R}} u^2u_{xxx}\,dx, 
	\end{equation*}
	where by using $u(\cdot,t)\in H^5(\mathbb{R})$,  each term is estimated as 
	\begin{subequations}
		\begin{align*}
			&- \int_{\mathbb{R}} uu_{xxt}\,dx= \int_{\mathbb{R}} u_xu_{xt}\,dx =   \frac{1}{2} \frac{d}{dt} \int_{\mathbb{R}} u_x^2 \,dx,
			\\
			&- \int_{\mathbb{R}} 3u^2u_x\,dx= - \int_{\mathbb{R}}(u^3)_x\,dx=0,
			\\
			&\int_{\mathbb{R}}  2uu_xu_{xx}+u^2u_{xxx} \,dx
			=\int_{\mathbb{R}} (u^2u_{xx})_x\,dx=0. 
		\end{align*}
	\end{subequations} 
	This implies that   
	\begin{equation}\label{H}
		H(t):=\int_{\mathbb{R}} (u^2+u_x^2)(x,t) \,dx=H(-\veps).
	\end{equation}
\end{remark} 
	This together with a standard Sobolev inequality yields the uniform bound for $u$:  
	\begin{equation}\label{u_bound}
		 |u(x,t)|\leq C \| u (\cdot, t) \|_{H^1} = C \sqrt{ H(-\ve ) }  =: C_u.
	\end{equation}

We note that $\textstyle K(x):=\frac12 e^{-|x|}$ is the resolvent kernel for $(1-\partial_x^2)$ on $\mathbb{R}$. I.e., 
\[ ( 1- \partial_x^2 ) K(x) = \bold{\delta}_0(x),\] 
where $\delta_0$ is the Dirac-delta measure concentrated at $x=0$. 
Then, in view of \eqref{CH_2}, one can rewrite $p$ as 
		\begin{equation}\label{p_form}
			p(x,t) = \left(  K* (2\gamma u + u^2+\frac{1}{2}u_x^2) \right) (x,t), 
	\end{equation}
	where $*$ denotes convolution. 
	Hence, there is $C_p>0$ such that 
		\begin{equation}\label{up_bound}
		\begin{split}
			|  p(x,t)  | &= \left|  K* (2\gamma u + u^2+\frac{1}{2}u_x^2) (x,t)  \right| 
			\\
			&\leq \gamma \int_{\mathbb{R}}e^{-|x-z|}|u(z,t)|\,dz + \frac{1}{2}\int_{\mathbb{R}} (u^2+\frac{1}{2}u_x^2)(z, t)\,dz 
			\\
			&\leq \gamma C_u + \frac12 H(-\veps) =: C_p, 
		\end{split}
	\end{equation}
%
	where in the last inequality, we used \eqref{u_bound} and \eqref{H}. 
	%
	Differentiating \eqref{p_form} in $x$, we have 
	\begin{equation*}
		p_x(x,t) =\int^{\infty}_{-\infty}\frac{1}{2}e^{-|x-z|}\frac{x-z}{|x-z|}(2\gamma u + u^2+\frac{1}{2}u_x^2)(z, t)\,dz, 
	\end{equation*} 
	for which  similarly to \eqref{up_bound}, we have  
	\begin{equation}\label{p_x}
		|p_x(x,t)|\le  \gamma C_u + \frac12 H(-\veps) = C_p.
	\end{equation}
	%
	In what follows, we will normalize $u$ and $p$ for computational convenience by defining   
\begin{equation} \label{w-u-x-xp} 
x' := \sqrt{\beta}x, \quad  \hu(x',t) := \sqrt{\beta}u(x,t), \quad \phi(x',t) := \beta p(x,t),
\end{equation}
where $\beta$ is defined in \eqref{beta}.
Then \eqref{CH} becomes 
		\begin{equation}\label{CH-w}
			\begin{split}
				& \hu_t + \hu \hu_{x'} = -\hp_{x'}, 
				\\ 
				& \left(\frac{1}{\beta}-\partial_{x'}^2\right)\hp =  \frac{1}{\beta}\hu^2 + \frac{1}{2} (\hu_{x'})^2 +\frac{2\gamma}{\sqrt{\beta}}\hu. 
			\end{split}
		\end{equation}
		 Then, the initial conditions \eqref{in-H-C}--\eqref{4.3a-p2} can be written as 
		\begin{subequations}\label{IC-w}
			\begin{align}
				&\hu_0(x)=\hu(x,-\veps)\in H^5(\mathbb{R})\subseteq C^4(\mathbb{R}), \label{w0-1}
				\\
				&\hu_0(0)
				=:\kappa_0, \quad \partial_{x'}\hu_0(0)=-2\veps^{-1}, \quad \partial_{x'}^2\hu_0(0)=0, \quad \partial_{x'}^3\hu_0(0)=256 \veps^{-6}, \label{init_w_3-p-w}
				\\
				& \|\partial_{x'} \hu_0\|_{L^{\infty}}\leq 2\veps^{-1}, \quad
				\|\partial_{x'}^2\hu_0\|_{L^{\infty}}\lesssim \veps^{-7/2}, \quad \|\partial_{x'}^3\hu_0\|_{L^{\infty}}\lesssim \veps^{-6},  \quad \|\partial_{x'}^4\hu_0\|_{L^{\infty}}\lesssim \veps^{-17/2},   \label{init_24-p-w}
				\\
				&\left|\varepsilon(\partial_{x'} \hu_0)(x')-\overline{U}'\left(\frac{x'}{\veps^{5/2}}\right)\right|\leq \min\left\{\frac{(\frac{x'}{\veps^{5/2}})^2}{3000(1+(\frac{x'}{\veps^{5/2}})^2)}, \frac{\Theta }{1+(\frac{x'}{\veps^{5/2}})^{2/5}}\right\}, \label{4.3a-p-w}
				\\
				&\lim_{|x'|\rightarrow \infty}|x'^{2/5}\partial_{x'}\hu_0(x')|\leq \frac{\theta}{2}.\label{4.3a-p2-w}
			\end{align}
		\end{subequations} 
		Here, we used the relation that 
		$$  \UU(\frac{x' }{\veps^{5/2}}) = \beta^{1/2} \UUb(\frac{ x}{  \veps^{5/2}}), \quad x' = \beta^{1/2} x, $$
		 which comes from the fact that \eqref{Ueq} admits the symmetry $\UUb(y) = \beta^{-1/2} \UU( \beta^{1/2} y)$. 
		Thanks to this normalization, we can compare $\hu$ with $\overline{U}$, i.e., we suppress $\beta$-dependence for the reference profiles.  
		Note from \eqref{u_bound}, \eqref{up_bound} and \eqref{p_x}, that 
		\begin{equation}\label{hat_up}
			|\hu(x',t)|\leq \sqrt{\beta}C_u, \quad |\hp(x',t)|\leq \beta C_p, \quad |\hp_{x'}(x',t)|\leq \sqrt{\beta}C_p.
		\end{equation}

	\subsection{Self-similar and modulation variables}\label{modul_sec}
	We define three dynamic modulation functions $(\tau,\kappa,\xi)(t): [-\veps,\infty) \rightarrow \mathbb{R}$ satisfying a system of ODEs:
		\begin{subequations}\label{modulation-new}
			\begin{align}
				&\dot{\tau}=-\frac{(\tau(t)-t)^2}{2\beta}\left(\hp(\xi(t),t)-2\gamma\beta^{1/2}\kappa(t)- \kappa(t)^2\right),
				\\
				&\dot{\kappa}= - \frac{ 2 \left( (\tau(t)-t)^{-1}\hp_{x'}(\xi(t),t)+4(\tau(t)-t)^{-2}(\kappa(t)+\gamma\beta^{1/2}) \right) }{ \beta \partial_{x'}^3\hu(\xi(t),t)}-\hp_{x'}(\xi(t),t),
				\\
				&\dot{\xi}=\frac{ \hp_{x'}(\xi(t),t)+4(\tau(t)-t)^{-1}( \kappa(t) + \gamma\beta^{1/2}) }{ \beta \partial_{x'}^3\hu(\xi(t),t)}+\kappa(t),
			\end{align}
		\end{subequations}
	with the initial values 
	\begin{equation}\label{Modul_init-p}
		\tau(-\veps)=0, \quad \kappa(-\veps)=\kappa_0, \quad \xi(-\veps)=0, 
	\end{equation}
	where $\kappa_0 := w_0(0) = \sqrt{\beta} u_0(0)$. 
	We note by Lemma~\ref{criterion} that all the functions appearing on the right side of \eqref{modulation-new} are of $C^1$. 
	By a standard ODE theory, the initial value problem \eqref{modulation-new}-\eqref{Modul_init-p} 
	 admits a unique local smooth solution $(\tau,\kappa,\xi)$.

	Similarly to \eqref{self-sim-spec}, we introduce the self-similar variables,
	incorporated with $\tau$ and $\xi$, as follows:
	\begin{equation}\label{ys}
		y(x',t) := \frac{x'-\xi(t)}{(\tau(t)-t)^{5/2}}, \quad s(t) :=-\log\left( \tau(t) - t \right). 
	\end{equation}
	Let 
	\begin{equation*}
		s_0 := s(-\ve) = -\log \veps
	\end{equation*}
	be the corresponding initial value for $s$.
	Then, we define $T_*$ as the first time such that $\tau(t)=t$,  
	i.e., 
	\begin{equation}\label{T-star}
		T_* := \inf\{ t \in [-\ve,\infty) : \tau(t) = t\}.
	\end{equation}
	We claim that $T_\ast<\infty$ is well-defined. To see this, we first check
	by \eqref{hat_up} and \eqref{modulation-new},  that 
	\begin{equation}\label{tau}
		\begin{split}
			|\dot{\tau}|&=\frac{e^{-2s}}{2}\left|\beta^{-1}\hp(\xi(t),t)-2\gamma \beta^{-1/2}\hu(\xi(t),t) - \beta^{-1}(\hu(\xi(t),t))^2\right|\leq Ce^{-2s}, 
		\end{split}
	\end{equation}
	where $C>0$ is a constant independent of $\beta$ and $\ve>0$.
	By choosing $\ve>0$ to be sufficiently small, we have from \eqref{tau} that 
		\begin{equation}\label{tau-dot}
			|\dot{\tau} | \le C\ve^2<\frac12. 
	\end{equation}
	Let $h(t) := \tau(t) - t$, which is a continuous function. Then from \eqref{Modul_init-p} and \eqref{tau-dot}, we have $h(-\ve)=\ve>0$ and $h'(t) = \dot\tau(t) -1< -1/2$. This implies that there is $T> -\ve$ such that $h(T) =0$, i.e., $\tau(T) = T$. Hence $T_\ast<\infty$ is well-defined. 

	Later we will show that $T_*$ is the first time, at which $\partial_x u$ blows up, i.e., the blow-up time, in the proof of Theorem~\ref{mainthm}.
%
%
%
%
	%
We note, from \eqref{tau-dot}, that
	\begin{equation}\label{dottau}
		\frac{1}{1-\dot{\tau}}\leq 1+\veps.
		\end{equation} 
	Also we see that the map $s: [-\ve, T_\ast) \to [s_0, \infty)$ defined in \eqref{ys}
	 is bijective and monotonically increasing since $\textstyle \dot{s}(t) = \frac{1-\dot\tau(t)}{\tau(t) -t }>0$ for all $t\in[-\ve, T_\ast)$. 
	We define $(U,P)(y,s)$ as
	\begin{equation}\label{WZPhi-p}
		\hu(x',t)-\kappa(t) =: e^{-3s/2} U(y,s),  \quad  \hp(x',t) = : P(y,s). 
	\end{equation}
	%
	By a straightforward calculation, one can check that 
	\begin{equation}\label{dt-dx-form}
		\begin{split} 
			{\partial_t} 
			%
			%
			%
			%
			%
			%
			= (  -\dot\xi(t) e^{5s/2} + \frac52 y (1-\dot\tau) e^s ) \partial_y + (1-\dot\tau ) e^s \partial_s, 
			\qquad 
			\partial_{x'}
			= e^{5s/2} \partial_y.
		\end{split} 
	\end{equation}
	By inserting the ansatz \eqref{WZPhi-p} into the system \eqref{CH-w}, we obtain the equations for $U$ and $P$:  
	\begin{subequations}
		\begin{align}
			& \partial_s U - \frac{3}{2} U+\mathcal{V}U_y=-\frac{e^{s/2}\dot{\kappa}}{1-\dot{\tau}}-\frac{e^{3s} P_y}{1-\dot{\tau}}, \label{CH_diff_0}
			\\
			& \left( \frac{1}{\beta}e^{-5 s} - \partial_{y}^2 \right) \hP = e^{-5 s} \left(\frac{(e^{- 3s/2}\hU + \kappa)^2}{\beta}+\frac{2\gamma}{\sqrt{\beta}}(e^{-3s/2}\hU+\kappa)\right)  + \frac{e^{-3s}\hU_{y}^2}{2}, \label{CH_P}
		\end{align}
	\end{subequations}
	where
	\begin{equation}\label{V}
		\mathcal{V}:= \frac{U}{1-\dot{\tau}} + \frac{5}{2} y + \frac{e^{3s/2}(\kappa-\dot{\xi})}{1-\dot{\tau}} .
	\end{equation}
	For future reference, we present the differentiated equations here. Applying $\partial_y^n$ for $n=1,2,3,4$, to \eqref{CH_diff_0}, we have
		\begin{subequations}\label{CH_diff}
		\begin{align}
			&\left( \partial_s + 1 + \frac{\hU_{y}}{2(1-\dot{\tau})} \right)\hU_{y} + \mathcal{V} \hU_{yy} = -\frac{e^{-2s}}{1-\dot{\tau}}\left(\frac{1}{\beta}\hP- \frac{1}{\beta}(e^{-3s/2}\hU+\kappa)^2-\frac{2\gamma}{\sqrt{\beta}}(e^{-3s/2}\hU+\kappa)\right), \label{CH_diff_1} 
			\\ 
			& \left( \partial_s + \frac{7}{2} + \frac{2\hU_{y}}{1-\dot{\tau}} \right)U_{yy} + \mathcal{V} \partial_{y}^3 \hU   = -\frac{ e^{-2s}}{1-\dot{\tau}}\left(\frac{1}{\beta}\hP_{y} - \frac{2}{\beta}e^{-3s/2}(e^{-3s/2}\hU+\kappa)\hU_{y}-\frac{2\gamma}{\sqrt{\beta}}e^{-3s/2}\hU_{y}\right),  \label{CH_diff_2} 
			\\
			& \left( \partial_s + 6 + \frac{3\hU_{y}}{1-\dot{\tau}} \right)\partial_{y}^3\hU + \mathcal{V} \partial_{y}^4 \hU  \label{CH_diff_3}
			\\
			&\quad  =  -\frac{ e^{-2s} }{1-\dot{\tau}}\left(\frac{1}{\beta}\hP_{yy}- \frac{2}{\beta}e^{-3s}\hU_{y}^2-\frac{2}{\beta}e^{-3s/2}(e^{-3s/2}\hU+\kappa)\hU_{yy}-\frac{2\gamma}{\sqrt{\beta}}e^{-3s/2}\hU_{yy}\right) - \frac{2 \hU_{yy}^2}{1-\dot{\tau}},  \nonumber
			\\
			& \left( \partial_s + \frac{17}{2} + \frac{4\hU_{y}}{1-\dot{\tau}} \right) \partial_{y}^4\hU + \mathcal{V} \partial_{y}^5 \hU  \label{CH_diff_4}
			\\
			&\quad  =  -\frac{ e^{-2s} }{1-\dot{\tau}}\left(\frac{1}{\beta}\partial_{y}^3\hP- \frac{6}{\beta}e^{-3s}\hU_{y}\hU_{yy}-\frac{2}{\beta}e^{-3s/2}(e^{-3s/2}\hU+\kappa)\partial_{y}^3\hU-\frac{2\gamma}{\sqrt{\beta}}e^{-3s/2}\partial_{y}^3\hU\right)-\frac{7\hU_{yy}\partial_{y}^3\hU}{1-\dot{\tau}}.  \nonumber
		\end{align}
	\end{subequations}
	%
	%
%
	Regarding \eqref{U0},
	 we impose the constraints for $U$: 
	\begin{equation}\label{constraint-p}
		U(0,s) = 0 , \quad  U_y(0,s) = -2, \quad U_{yy}(0,s)=0. 
	\end{equation}
	Using the constraints \eqref{constraint-p} for \eqref{CH_diff_0}, \eqref{CH_diff_1} and \eqref{CH_diff_2}, we obtain 
	\begin{subequations}\label{b-mod}
		\begin{align} 
			&   \dot{\tau} = -\frac{e^{-2s}}{2 \beta }\left( \hP(0,s)- \kappa^2-2\gamma \beta^{1/2}\kappa\right),  \label{Eq_Modul1-p} 
			\\
			&  \dot{\kappa} = -2e^{-5s/2}\frac{ \hP_{y}(0,s)+4e^{-3s/2}( \kappa + \gamma \beta^{1/2})}{ \beta \partial_{y}^3\hU(0,s)}-e^{5s/2}\hP_{y}(0,s), \label{Eq_Modul0-p} 
			\\
			& \dot{\xi} = e^{-7s/2}\frac{ \hP_{y}(0,s)+4e^{-3s/2} ( \kappa+\gamma\beta^{1/2})}{ \beta \partial_{y}^3\hU(0,s)}+\kappa. \label{Eq_Modul2-p}
		\end{align}
	\end{subequations}
\begin{remark}\label{constraints-con}
We note that \eqref{b-mod} is equivalent to \eqref{modulation-new}. Hence the initial value problem \eqref{b-mod} with the initial data \eqref{Modul_init-p} admits a unique local-in-time solution $(\tau,\kappa,\xi)$.  
%
%
It is straightforward to check that  
	\begin{equation*}
		(U(0,s), U_y(0,s), U_{yy}(0,s)) \equiv (0, -2, 0)
	\end{equation*} is a solution to a set of equations \eqref{CH_diff_0}, \eqref{CH_diff_1} and \eqref{CH_diff_2} 
satisfying the condition \eqref{constraint-p} at $s=s_0$. By uniqueness, as long as the solution $(\tau, \kappa, \xi)$ to the initial problem \eqref{b-mod} with \eqref{Modul_init-p} exists, we see that  the solution $U(y,s)$ continues to satisfy the constraints \eqref{constraint-p}.
\end{remark}

We present uniform bounds for $P$ and $P_y$ that are used throughout our analysis. 
\begin{lemma}\label{P-Py-lem}
It holds that 
\begin{equation}\label{P-Py}
|P (y, s) |\leq C\beta, \quad  e^{5s/2} | P_y (y, s) | \le C\sqrt{\beta}, \quad y\in \mathbb{R}, \quad s\ge s_0,
\end{equation}
where $C=C( \| u_0 \|_{H^1} )>0$ is a uniform constant depending only on the initial data $\| u_0 \|_{H^1}$. 
\end{lemma}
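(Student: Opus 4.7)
The plan is to derive both bounds in \eqref{P-Py} directly from the pointwise estimates on $\phi$ and $\phi_{x'}$ already established in \eqref{hat_up}, by simply unwinding the self-similar change of variables \eqref{ys}--\eqref{WZPhi-p}. Recall that \eqref{hat_up} asserts $|\phi(x',t)| \le \beta C_p$ and $|\phi_{x'}(x',t)| \le \sqrt{\beta}\, C_p$ on the whole evolution interval, with $C_p = \gamma C_u + \tfrac12 H(-\varepsilon)$. Since $H(t) \equiv H(-\varepsilon) = \|u_0\|_{H^1}^2$ is conserved (see Remark~\ref{Hamiltonian-rem}) and $C_u \lesssim \|u_0\|_{H^1}$ by the Sobolev embedding $H^1(\mathbb{R})\hookrightarrow L^\infty(\mathbb{R})$, $C_p$ depends only on $\|u_0\|_{H^1}$ (with $\gamma$ treated as a fixed equation parameter).

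For the first bound, I would use that $P(y,s) = \phi(x',t)$ pointwise by definition \eqref{WZPhi-p}, so $|P(y,s)| \le \beta C_p$ is immediate upon noting that the change of variables \eqref{ys} yields a bijection between $(y,s)\in\mathbb{R}\times[s_0,\infty)$ and $(x',t)\in\mathbb{R}\times[-\varepsilon,T_*)$. For the second bound, I would apply the spatial chain rule $\partial_{x'} = e^{5s/2}\partial_y$ from \eqref{dt-dx-form} to the relation $P(y,s) = \phi(x',t)$, obtaining $\phi_{x'}(x',t) = e^{5s/2} P_y(y,s)$, and then invoke $|\phi_{x'}| \le \sqrt{\beta}\, C_p$ to conclude $e^{5s/2} |P_y(y,s)| \le \sqrt{\beta}\, C_p$, which is exactly the desired estimate.

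There is no genuine obstacle here: the lemma is a straightforward recasting of the Hamiltonian-based bounds \eqref{up_bound} and \eqref{p_x} into self-similar coordinates. The substantive content—namely the resolvent-kernel representation \eqref{p_form} of $p$ via $K(x)=\tfrac12 e^{-|x|}$ combined with the $H^1$-conservation \eqref{H}—has already been carried out prior to the statement. The only point requiring care is to verify that all constants depend solely on $\|u_0\|_{H^1}$ (and the fixed parameter $\gamma$), which holds since $H(-\varepsilon) = \|u_0\|_{H^1}^2$ is independent of $\varepsilon$ and $\beta$, and the $\beta$-dependence of the bounds in \eqref{hat_up} is the explicit scaling from the normalization \eqref{w-u-x-xp}.
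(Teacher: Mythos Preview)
Your proposal is correct and matches the paper's proof essentially line for line: the paper also simply notes that $P(y,s)=\phi(x',t)$ and $\partial_{x'}\phi = e^{5s/2}P_y$ from \eqref{WZPhi-p} and \eqref{dt-dx-form}, then invokes \eqref{hat_up}. Your additional remark that $C_p$ depends on $\gamma$ as well as $\|u_0\|_{H^1}$ is accurate and worth keeping in mind, though the paper suppresses this dependence.
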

\begin{proof}
Note from \eqref{WZPhi-p} and \eqref{dt-dx-form} that $\hp(x',t) = : P(y,s)$ and  
$\partial_{x'} \hp = e^{5s/2} \partial_y P$, respectively.  
Then \eqref{P-Py} immediately follows from \eqref{hat_up}. 
\end{proof}
Now we establish the global pointwise estimates by a bootstrap argument.

\subsection{Bootstrap argument}\label{boots_sec} 
We use a bootstrap argument to establish \emph{global} pointwise estimates that demonstrate the stability of the blow-up profiles. We begin with the bootstrap assumptions.  By Lemma~\ref{criterion}, there exists a unique local-in-time solution $u\in C([-\ve, T); H^5( \mathbb{R} ))$ to the initial value problem \eqref{CH_main} satisfying \eqref{in-H-C}--\eqref{4.3a-p2} for some $T>-\ve$. Correspondingly, $w\in C([-\ve, T); H^5( \mathbb{R} ))$, defined by \eqref{w-u-x-xp}, is a solution to \eqref{CH-w} with \eqref{IC-w}. 
We consider the corresponding solution $U\in C([s_0, \sigma_1); H^5( \mathbb{R} ))$, defined by \eqref{ys} and \eqref{WZPhi-p}, for some $\sigma_1>s_0$.  
We assume that  for  $s\in[s_0,\sigma_1]$ and $y\in\mathbb{R}$,  $U$  satisfies 
		\begin{subequations}\label{Boot_2-p}
			\begin{align}
				&|U_y(y,s)-\overline{U}'(y)| \leq  \frac{ y^2}{1000(1+y^2)} ,\label{EP2_1D1-p}
				\\
				&|U_y(y,s)-\overline{U}'(y)| \le  \frac{6}{13(1+y^{2/5})},   \label{Utildey_M-p}
				\\
				& |U_{yy}(y,s)| \leq  \frac{M^{1/8} |y|}{(1+y^2)^{1/2}}  , \label{EP2_1D3-p}
				\\
				&  |\partial_y ^3 U(0,s)-256| \leq 1,   \label{EP2_1D2-p} 
				\\
				& \|\partial_y ^3 U(\cdot,s)\|_{L^{\infty}} \leq M^{3/4},\label{EP2_1D5-p}
				\\
				& \|\partial_y ^4 U(\cdot,s)\|_{L^{\infty}} \leq M, \label{EP2_1D4-p}
			\end{align}
		\end{subequations}
		where $M>0$ is a sufficiently large constant to be chosen later.  
		
			\begin{remark}\label{init_rmk}{(Initial conditions for $U$)}
				The set of initial conditions \eqref{init_w_3-p-w}--\eqref{4.3a-p-w} implies that the new unknown $U$  in  the self-similar variables $(y,s)$  satisfies  
				\begin{subequations}\label{EP-W-IC}
					\begin{align}
						& |U_y(y,s_0)-\overline U'(y)| \leq \frac{y^2}{3000(1+y^2)},   \label{W-y2}
						\\
						&|U_y(y,s_0)-\overline{U}'(y)| \le \frac{\Theta}{1+y^{2/5}},\label{Utildey_init}
						\\
						&\|U_{yy}(\cdot, s_0)\|_{L^{\infty}} \leq C_2, \label{1D3}
						\\
						&|\partial_y ^3 U(0, s_0)-256| = 0,  \label{1D2} 
						\\
						&\|\partial_y ^3 U(\cdot, s_0) \|_{L^{\infty}} \leq C_3, \label{1D5}
						\\
						&\|\partial_y ^4 U( \cdot, s_0) \|_{L^{\infty}} \leq C_4 \label{1D4}
					\end{align}
				\end{subequations}
				for some positive constants $C_2, C_3$ and  $C_4$, which are independent of $\veps$.

				By the Taylor expansion with \eqref{1D2} and \eqref{1D4}, 
				\begin{equation*}
					|U_{yy}(y,s_0)|\leq|y||\partial_y^3U(0,s_0)|+\frac{y^2}{2}\|\partial_y^4U(\cdot,s_0)\|_{L^{\infty}}\leq 256|y|+\frac{C_4}{2}y^2. 
				\end{equation*}
				By this and \eqref{1D3}, we have 
				\begin{equation}\label{1D3'}
					|U_{yy}(y,s_0)|\leq \min\left\{ 256|y|+\frac{C_4}{2}y^2 ,C_2\right\}\leq \frac{M^{1/8}|y|}{4(1+y^2)^{1/2}},\quad   y \in \mathbb{R}
				\end{equation}
				for sufficiently large $M>0$. 
				Hence, from \eqref{EP-W-IC} and \eqref{1D3'}, we see that the initial data $U(y, s_0)$ satisfies the bootstrap assumptions \eqref{EP2_1D1-p}--\eqref{EP2_1D4-p}, provided $M>0$ is sufficiently large.  
			\end{remark}
		
\begin{remark}
It is straightforward to check that there exists a time-interval $[s_0,\sigma_1]$ for some $\sigma_1>s_0$, in which the solution $U(y,s)$ satisfies the bootstrap assumptions \eqref{EP2_1D1-p}--\eqref{EP2_1D4-p}. 
More specifically, as discussed earlier, thanks to Lemma~\ref{criterion}, we have $U\in C([s_0, \sigma_1); H^5( \mathbb{R} ))$, which specifically implies that $\partial_y^i U \in C([s_0, \sigma_1]; L^\infty(\mathbb{R}))$ for $0 \leq i \leq 4$. 
Hence, one can deduce that \eqref{EP2_1D3-p}--\eqref{EP2_1D4-p} hold for some $\sigma_1>s_0$. 
On the other hand, it is more subtle to show \eqref{Utildey_M-p}. We consider the equation for $W_1(y,s) := (1+y^{2/5}) U_y(y,s)$: 
			\[ \partial_s W_1 + D_1^1 W_1 + \mathcal{V} \partial_y W_1 = \tilde F, \]
			where 
			$\textstyle D_1^1 := 1 + \frac{U_y}{2(1-\dot\tau)} - \frac25 \frac{\mathcal{V}}{y^{3/5} + y}$ and $\tilde F$ is continuous and bounded. 
			We note that the term $\textstyle \frac25 \frac{\mathcal{V}}{y^{3/5} + y}$ in $D_1^1$ is singular near $y=0$. To overcome this difficulty, we decompose the region into two parts.
  Near $y=0$,  using  the fact that $\partial_y^i U \in C([s_0, \sigma_1];  L^\infty(\mathbb{R}))$ for $1\leq i\leq 4$ and  \eqref{constraint-p}, we apply the Taylor expansion near $y=0$ to verify that $W_1 \in C([s_0, \sigma_1]; L^\infty(|y|\le l))$ for some $l>0$. 
%
 For the other part, $|y| \ge l$, we see that $|D_1^1(y,s)|\lesssim 1$ and $ | \tilde F|\lesssim 1$ for all $s\in[s_0, \sigma_1]$. By integration along the characteristic curve associated with $\mathcal{V}$, 
we verify that 
$W_1\in C([s_0, \sigma_1];  L^\infty(\mathbb{R} ))$. From this together with the initial condition \eqref{4.3a-p-w}, we deduce that the desired assumption \eqref{Utildey_M-p} holds for some $\sigma_1>s_0$. 
Similarly, we can show \eqref{W-y2} and \eqref{1D3'} as well. 
\end{remark}
%
%
%
%
%
%
%
%

Now, we give several bounds for $U$ and its derivatives, obtained from the bootstrap assumptions \eqref{Boot_2-p}. 	
First, by \eqref{EP2_1D1-p} and \eqref{num_4}, we have an uniform bound of $U_y$ as  
\begin{equation} \label{Uy1-p}
| U_y (y,s)| \le | \overline{U}'(y) | + | U_y(y,s) -\overline{U}'(y) | \le 2-\frac{6y^2}{5(1+y^2)} +\frac{ y^2}{1000(1+y^2)} \le 2.
\end{equation}
Also \eqref{U_taylor_far} and \eqref{Utildey_M-p} give   
\begin{equation}\label{ax1_0}
	|U_y(y,s)| \leq |\overline{U}'(y)|+|U_y(y,s)-\overline{U}'(y)| \leq |\overline{U}'(y)|+\frac{C}{1+y^{2/5}}\leq \frac{C}{y^{2/5}}.
\end{equation}
By integrating this, we get
\begin{equation*}
	|U(y,s)|\leq C|y|^{3/5}.
\end{equation*}

\begin{remark}
	Here we present an inequality that is frequently used throughout our analysis. 
	For any $a, b > 0$, it holds that 
	\begin{equation}\label{beta-exp}
		\beta^{-a} e^{-b s } \lesssim \ve^{-a (5 - \deltaz ) } e^{-b s} =  \ve^{a \deltaz} \underbrace{ \ve^{-5 a } e^{ -5 a s} }_{\le 1}  e^{ ( 5a - b ) s } \le \ve^{ a \deltaz } e^{ ( 5a - b ) s }, 
	\end{equation}
	where $\deltaz>0$ is given as in \eqref{init_w_3-p}.
\end{remark}

Let $T_{\sigma_1}:= s^{-1}(\sigma_1)$, where $s:[-\ve,T_\ast) \to [s_0, \infty)$, defined in \eqref{ys}, is bijective and monotonically increasing.
\begin{lemma}\label{xilem} 
If $U$   satisfies  \eqref{Boot_2-p} for all $s\in[s_0,\sigma_1]$ and $y\in\mathbb{R}$, then it holds 
	that for all $s\in[s_0,\sigma_1]$, correspondingly  for all $t\in[-\ve, T_{\sigma_1}]$, 
	\begin{equation}\label{xibound}
		|\kappa(t) |\leq C\sqrt{\beta}, \qquad |\xi(t) |\leq C(\beta),
	\end{equation}
	and 
	\begin{equation} \label{temp_1-p}
		|e^{3s/2}(\kappa-\dot{\xi})| 
		\leq Ce^{-s}
	\end{equation}
	for some constant $C>0$. 
	Furthermore, we have
	\begin{equation}\label{UW_far}
		\inf_{\{|y|>1, s\in[s_0, \sigma_1]\}} \mathcal{V}(y,s) \frac{y}{|y|}  \geq \frac{1}{8},
	\end{equation}
	where $\mathcal{V}(y,s)$ is defined in \eqref{V}. 
\begin{proof}
	By \eqref{WZPhi-p} with $x'= \xi(t)$, and \eqref{hat_up}, \eqref{constraint-p}, we have $|\kappa(t)|=|\hu(\xi(t), t)|\leq C\sqrt{\beta}$. Using this, \eqref{P-Py}, \eqref{EP2_1D2-p} and \eqref{beta-exp} for \eqref{Eq_Modul2-p}, we obtain
	\begin{equation*}
		\begin{split}
			|\dot{\xi}|&=\left|\frac{e^{-7s/2}}{\partial_y^3\hU(0,s)}(\beta^{-1}\hP_{y}(0,s)+4e^{-3s/2} (\beta^{-1}\kappa+\gamma\beta^{-1/2}))+\kappa\right|
			\\
			&\leq C\left(e^{-7s/2}(\beta^{-1/2}e^{-5s/2}+\beta^{-1/2}e^{-3s/2} )+\beta^{1/2}\right)
			\\
			&\leq C\max\{\beta^{1/2},1\}, 
		\end{split}
	\end{equation*} 
	from which we have
	\begin{equation*}
		|\xi|\leq \int^t_{-\veps} |\dot{\xi}|\,dt'
		=\int^s_{s_0}|\dot{\xi}|\frac{e^{-s'}}{1-\dot{\tau}}\,ds' 
		\leq C(\beta)\int^s_{s_0}e^{-s'}ds'
		\leq C(\beta).
	\end{equation*}
	This finishes the proof of \eqref{xibound}. 	
	In addition, by \eqref{Eq_Modul2-p}, \eqref{EP2_1D2-p}, \eqref{xibound} and
	\eqref{P-Py}, we obtain
	\begin{equation*}
		\begin{split}
			|e^{3s/2}(\kappa-\dot{\xi})|&=\left|\frac{e^{-2s}(\beta^{-1}\hP_{y}(0,s)+4e^{-3s/2} (\kappa \beta^{-1}+\gamma\beta^{-1/2}))}{\partial_{y}^3\hU(0,s)}\right|
			\leq C\beta^{-1/2}e^{-7s/2}\leq Ce^{-s}, 
		\end{split}
	\end{equation*}
	where the last inequality holds due to \eqref{beta-exp}.
	%
	Lastly, using \eqref{tau}, \eqref{constraint-p}, \eqref{Uy1-p} and \eqref{temp_1-p} for \eqref{V}, one can check that  
	\begin{equation*}
		\begin{split}
			\mathcal{V}(y,s) &\geq -\frac{2y}{1-\dot{\tau}}+\frac{5y}{2}-\frac{Ce^{-s}}{1-\dot{\tau}}
			\ge  \frac{y}{4}-C\veps \quad \text{ for }y\geq 0,
		\end{split}
	\end{equation*}
	\begin{equation*}
		\begin{split}
			\mathcal{V} (y,s) &\leq -\frac{2y}{1-\dot{\tau}}+\frac{5y}{2}+\frac{Ce^{-s}}{1-\dot{\tau}}
			\leq \frac{y}{4}+C\veps \quad \text{ for }y<0.
		\end{split}
	\end{equation*}
	This verifies the assertion \eqref{UW_far}.
\end{proof}
\end{lemma}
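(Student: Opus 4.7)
The plan is to derive all three conclusions directly from the modulation equations \eqref{b-mod}, combined with the bootstrap assumptions \eqref{Boot_2-p} and the pressure bounds of Lemma~\ref{P-Py-lem}. The bound on $\kappa$ is essentially free: evaluating the definition \eqref{WZPhi-p} at $y=0$, $x' = \xi(t)$, and using the constraint $U(0,s)=0$ from \eqref{constraint-p}, we get $\kappa(t) = \hu(\xi(t),t)$, after which the uniform bound $|\hu|\leq \sqrt{\beta}\,C_u$ in \eqref{hat_up} finishes that part.

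For $\xi$, I would substitute the bootstrap bound \eqref{EP2_1D2-p} (which keeps $\partial_y^3 U(0,s)$ in a small neighborhood of $256$, hence bounded away from zero) together with \eqref{P-Py} and the bound on $\kappa$ just obtained into \eqref{Eq_Modul2-p} to produce a uniform estimate $|\dot\xi|\leq C\max\{\beta^{1/2},1\}$. Changing variables via $dt = e^{-s}\,ds/(1-\dot\tau)$ and exploiting the exponential decay in $s$ then yields the claimed $\beta$-dependent bound on $|\xi(t)|$. The estimate $|e^{3s/2}(\kappa-\dot\xi)|\leq Ce^{-s}$ comes out by solving \eqref{Eq_Modul2-p} for $\kappa-\dot\xi$: the resulting expression carries a prefactor $e^{-2s}/\beta$ multiplying quantities bounded by \eqref{P-Py} and \eqref{xibound}, and the interaction between the $\beta^{-1}$ factor and the exponential is absorbed using \eqref{beta-exp}, which converts $\beta^{-1/2}e^{-7s/2}$ into $\ve^{\deltaz/2}e^{-s}$.

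The last claim \eqref{UW_far} follows by unpacking \eqref{V}. Since $U(0,s)=0$ and $|U_y(y,s)|\leq 2$ by \eqref{Uy1-p}, we have $|U(y,s)|\leq 2|y|$, so the sum of the first two terms in \eqref{V} is at least $(-2/(1-\dot\tau)+5/2)\,y$ for $y\geq 0$ and bounded above by the symmetric expression for $y\leq 0$. The estimate \eqref{tau-dot} gives $1/(1-\dot\tau)=1+O(\ve^2)$, so the coefficient of $y$ is at least $1/2-O(\ve^2)\geq 1/4$. Adding the lower-order correction $e^{3s/2}(\kappa-\dot\xi)/(1-\dot\tau)=O(e^{-s})$, already controlled in the previous step, we obtain $\mathcal{V}(y,s)\,y/|y|\geq |y|/4-C\ve\geq 1/8$ for $|y|\geq 1$ and $\ve$ sufficiently small.

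None of the steps appears conceptually deep; the only real technical point is the systematic use of \eqref{beta-exp} to combine powers of $\beta$ with the exponential-in-$s$ factors so that the final bounds are consistent with the claim, after which each item reduces to direct substitution into the modulation system.
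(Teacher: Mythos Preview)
Your proposal is correct and follows essentially the same route as the paper's proof: the bound on $\kappa$ via $\kappa(t)=\hu(\xi(t),t)$ and \eqref{hat_up}, the estimate on $\dot\xi$ and then $\xi$ from \eqref{Eq_Modul2-p} with \eqref{EP2_1D2-p}, \eqref{P-Py} and \eqref{beta-exp}, the expression for $\kappa-\dot\xi$ read off from \eqref{Eq_Modul2-p}, and finally the lower bound on $\mathcal{V}\,y/|y|$ from $|U_y|\le 2$ and \eqref{temp_1-p}. The argument and the order of the steps match the paper almost verbatim.
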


Now we shall close  the bootstrap assumptions \eqref{Boot_2-p}. 
	\begin{proposition}\label{mainprop}
		There exist constant $M>0$ and $\veps_0>0$ such that if for each $\veps\in (0,\veps_0)$,  
		  $U$   satisfies  \eqref{Boot_2-p}  for  $s\in[s_0,\sigma_1]$ and $y\in\mathbb{R}$, then  it holds for  $s\in[s_0,\sigma_1]$ and $y\in\mathbb{R}$, that  
		\begin{subequations}\label{close-boot}
			\begin{align}
				&|U_y(y,s)-\overline U'(y)| \leq \frac{ y^2}{1500(1+y^2)},\label{1}
				\\
				&|U_y(y,s)-\overline{U}'(y)| \le \frac{\frac{6}{13}-\theta}{1+y^{2/5}}, \label{Utildey_M-str}
				\\
				& |\partial_y ^2 U(y,s)| \leq  \frac{M^{1/8}|y|}{2(1+y^2)^{1/2}}, \label{3}
				\\
				&  |\partial_y ^3 U(0,s)-256| \leq C\veps^{\deltaz},   \label{4} 
				\\
				& \|\partial_y ^3 U(\cdot,s)\|_{L^{\infty}} \leq \frac{M^{3/4}}{2},\label{5}
				\\
				& \|\partial_y ^4 U(\cdot,s)\|_{L^{\infty}} \leq \frac{M}{2}, \label{6}
			\end{align}
		\end{subequations}
		where $\theta>0$ is defined in \eqref{theta}, and $\deltaz>0$ is defined in \eqref{init_w_3-p}. 
	\end{proposition}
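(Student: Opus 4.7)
The plan is to close each of the six bounds in \eqref{close-boot} by a maximum-principle/characteristic argument applied to an appropriate evolution equation, treating all terms involving $P$, $\dot\kappa$, $\dot\xi$, and $\gamma$ as exponentially small perturbations controlled by Lemma~\ref{P-Py-lem}, Lemma~\ref{xilem}, and the elementary inequality \eqref{beta-exp}. I would take the six closures in the order \eqref{5}, \eqref{6}, then \eqref{4}, then the weighted estimates \eqref{3}, \eqref{1}, and finally \eqref{Utildey_M-str}, since each subsequent bound uses strict improvements in earlier ones to absorb perturbations.

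For \eqref{5} and \eqref{6}, equations \eqref{CH_diff_3} and \eqref{CH_diff_4} carry linear damping coefficients $6 + 3U_y/(1-\dot\tau)$ and $17/2 + 4U_y/(1-\dot\tau)$, which by the bound $|U_y|\le 2$ from \eqref{Uy1-p} remain bounded below by positive constants once $\veps$ is small. The source terms, pressure derivatives controlled by differentiating the convolution representation \eqref{p_form} as in Lemma~\ref{P-Py-lem}, together with the quadratic interactions $U_{yy}^2$ and $U_{yy}\partial_y^3 U$, are bounded by quantities of the form $C M^a$ with $a < 3/4$ and $a < 1$ respectively. Integrating along the characteristics of $\partial_s + \mathcal{V}\partial_y$, whose long-time behaviour is controlled by \eqref{UW_far}, produces bounds of the form $\|\partial_y^3 U\|_{L^\infty} \le C M^{1/4} + C_3 e^{-(s-s_0)}$ and similarly for $\partial_y^4 U$, which yield \eqref{5}--\eqref{6} provided $M$ is chosen large enough once and for all. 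For \eqref{4}, evaluating \eqref{CH_diff_3} at $y=0$ and using \eqref{constraint-p} gives an ODE $\dot f + 6 f = \mathcal{G}(s)$ for $f(s) := \partial_y^3 U(0,s) - 256$ with $f(s_0) = 0$ by \eqref{1D2}. A short computation using $|\mathcal V(0,s)| \lesssim e^{-s}$ from \eqref{temp_1-p}, the closed bound \eqref{6}, \eqref{P-Py}, and \eqref{beta-exp} gives $|\mathcal G(s)| \lesssim \veps^{\deltaz} e^{-\alpha s}$ for some $\alpha > 0$; direct integration yields \eqref{4}.

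For the weighted estimates, set $\wt U_y := U_y - \overline{U}'$ and subtract \eqref{Ueq} from \eqref{CH_diff_1} to obtain a transport equation $(\partial_s + A)\wt U_y + \mathcal{V} \wt U_{yy} = \mathcal{R}$, in which the remainder $\mathcal R$ collects the $P$, $\kappa$, $\gamma$, and $\dot\tau$ contributions and is $O(e^{-s})$ in $L^\infty$, and where the $\overline U''$ term coming from $\overline{U}$ satisfying \eqref{Ueq} is isolated as a cross term. Dividing by the appropriate weight $W$, the effective damping coefficient for $\wt U_y/W$ is precisely the combination on the left-hand side of \eqref{num_2}, \eqref{num_3}, \eqref{num_6}, or \eqref{num_1}. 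For \eqref{3} I would use $W_3 = |y|/(1+y^2)^{1/2}$ starting from \eqref{CH_diff_2} together with \eqref{num_3}; for \eqref{1} I would use $W_1 = y^2/(1+y^2)$ together with \eqref{num_2}. In each case the strict margin (for instance $y^2/(5(1+y^2))$ in \eqref{num_2} compared with the critical $y^2/(500(1+y^2))$ of \eqref{num_6}) absorbs the $O(\veps^{\deltaz})$ perturbation, and the gap from the initial bound (e.g.\ $1/3000$ in \eqref{W-y2}) to the target $1/1500$ of \eqref{1} is closed by the contraction.

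The principal obstacle is the critical-weight estimate \eqref{Utildey_M-str}. There I would work with $W_2 = 1/(1+y^{2/5})$ and rely on \eqref{num_1}, which provides contraction only for $|y| \ge m_0$. On the far region the strict margin $\lambda - 1 > 0$ absorbs the perturbation and improves the initial bound $\Theta$ from \eqref{Utildey_init} to $\Theta + \theta$, in keeping with the choice of $\theta$ in \eqref{theta}. Near the origin \eqref{num_1} fails, so I would split: exploiting \eqref{constraint-p} and $\overline{U}''(0) = 0$, Taylor expansion yields $\wt U_y = O(y^2)$ uniformly in $s$, and combining this with the closed bound \eqref{3} on $U_{yy}$ reduces the required estimate on $|y| \le m_0$ to a uniform bound on a compact set, which follows from continuous dependence on initial data (Lemma~\ref{criterion}) together with the initial bound \eqref{Utildey_init} and smallness of $\veps$. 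Tracking the arithmetic so that the $\lambda - 1$ surplus of \eqref{num_1} is not eroded by the $O(M e^{-s}) + O(\veps^{\deltaz})$ perturbations, and verifying the compatibility of the order in which the bounds are closed, constitute the most delicate part of the argument.
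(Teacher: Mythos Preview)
Your overall strategy matches the paper's, but there is a genuine gap concerning the damping for $\partial_y^3 U$. The coefficient $6 + 3U_y/(1-\dot\tau)$ is \emph{not} bounded below by a positive constant: since the constraint \eqref{constraint-p} forces $U_y(0,s)=-2$, at $y=0$ this coefficient equals $-6\dot\tau/(1-\dot\tau)=O(e^{-2s})$, and in fact \eqref{num_4} only gives $6+3\overline U'(y)\ge 18y^2/(5(1+y^2))$, which degenerates at the origin. Consequently your argument for \eqref{5} fails as written. The paper resolves this by splitting: on $\{|y|\le (8M^{1/4})^{-1}\}$ one uses the Taylor expansion with the bootstrap bounds \eqref{EP2_1D2-p} and \eqref{EP2_1D4-p} to obtain $|\partial_y^3U|\le M^{3/4}/4$ directly, and only on the complement does the damping $D_3^U\gtrsim y^2/(1+y^2)\ge 1/(25M^{1/2})$ yield a usable lower bound. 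The same misreading affects your treatment of \eqref{4}: the ODE at $y=0$ is not $\dot f+6f=\mathcal G$ but rather $\dot f+O(e^{-2s})f=\mathcal G$, and the paper simply bounds $|\partial_s\partial_y^3U(0,s)|\le Ce^{-\eta_0 s}$ and integrates; the initial condition $\partial_y^3U(0,s_0)=256$ does the work, not damping.

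Your near-region argument for \eqref{Utildey_M-str} is also incomplete. Taylor expansion and the bound \eqref{3} on $U_{yy}$ give control only on $|y|\ll 1$, whereas $m_0$ in \eqref{num_1} is of order $10^6$; integrating $|U_{yy}|\le M^{1/8}/2$ over $[0,m_0]$ produces a hopeless bound, and ``continuous dependence'' from Lemma~\ref{criterion} cannot give estimates uniform on $[s_0,\sigma_1]$ with $\sigma_1$ arbitrary. The paper instead uses the \emph{already closed} bound \eqref{1}, which yields $(1+y^{2/5})|\wt U_y|\le (1+m_0^{2/5})/1500<6/13-3\theta$ on $|y|\le m_0$. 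You also need an auxiliary far-field decay estimate (the paper's Lemma~\ref{dec-lem}) guaranteeing $\limsup_{|y|\to\infty}(1+y^{2/5})|\wt U_y|<6/13-2\theta$, so that the supremum in the maximum-principle step is attained at a finite point; this is a separate (simpler) transport estimate that your sketch does not mention.
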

%
%
	\begin{proof}
Since the proof is rather lengthy, we decompose it into several lemmas in Section~\ref{sec3-boot}. The desired estimates are established through Lemmas~\ref{U30lem}--\ref{mainprop_1-p}. For the reader's convenience, we provide the following list: \eqref{1} is obtained in Lemma~\ref{Wy1_lem_p}, \eqref{Utildey_M-str} in Lemma~\ref{mainprop_1-p}, \eqref{3} in Lemma~\ref{Wy2_lem}, \eqref{4} in Lemma~\ref{U30lem}, \eqref{5} in Lemma~\ref{w-3-est}, and \eqref{6} in Lemma~\ref{Wy4_lem-p}.
	\end{proof}	
	
	\begin{remark}
		The bootstrap assumption \eqref{Boot_2-p} indicates that $U_y$ is close to $\overline{U}'$ near $y=0$ in the $C^2$ norm, ensuring the constraints \eqref{constraint-p} at $y=0$, and that $U$ remains uniformly bounded in the $C^4$ norm. In fact, the asymptotic behavior of $U$ near $y=0$ plays an important role in resolving the issues due to degeneracy of the damping term around $y=0$ in the transport-type equations in the proof of Lemma~\ref{mainprop_1-p}. 
	\end{remark}

\subsection{Global continuation} \label{global-conti}
Now, we use a standard continuation argument to show that the desired estimates \eqref{close-boot} in Proposition~\ref{mainprop} hold \emph{globally} in $s \in[s_0, \infty)$.
Let  the vector $\{V_i(s)\}_{1\leq i\leq 6}$ be
\begin{equation*}
	\begin{split}
		V_1 &:= \sup_{y\in\mathbb{R}} \left( \frac{1+y^2}{y^2} |U_y(y,s)-\overline{U}'(y)| \right) , \quad V_2 := \sup_{y\in\mathbb{R}} \left( (y^{2/5} + 1 ) | U_y(y,s) - \overline U'(y) | \right),
		\\
		V_3 & : = \sup_{y\in\mathbb{R}} \left( \frac{(1+y^2)^{1/2}}{|y|} | U_{yy}(y,s) | \right), \quad V_4  := | \partial_y^3 U(0,s) -256 |,  \quad V_5  := \| \partial_y^3 U (\cdot, s) \|_{L^\infty}, 
		\\
		V_6 & := \| \partial_y^4 U (\cdot, s) \|_{L^\infty}.
	\end{split}
\end{equation*}
Also, 
let $\{K_i\}_{1\leq i\leq 6}$ be 
\begin{equation*}
	K_1:= \frac{1}{1000}, \ \ K_2:= \frac{6}{13},\ \ K_3:= M^{1/8}, \ \ K_4:=1 , \ \  K_5:= M^{3/4}, \ \ K_6 := M. 
\end{equation*}
For any $\bb=(b_1,\cdots , b_6)\in (0,1)\times \cdots \times (0,1)$, we define the solution space $\mathfrak{X}_\bb(s)$ by
\begin{multline*}
\mathfrak{X}_\bb(s) :=\{ U \in  C([s_0,s]; H^5( \mathbb{R}) ) : V_i(s') \leq b_i K_i, \;\forall s'\in[s_0,s], \; 1\leq i\leq 6  \}.
\end{multline*}
For simplicity, let $\mathfrak{X}_{\bold{1}}(s)$ denote $\mathfrak{X}_\bb(s)$ with $b_i=1$ for $1\le i \le 6$.

Note that the set of initial conditions \eqref{init_w_3-p}--\eqref{4.3a-p2} for \eqref{CH} implies that the new unknown $U$  in  the self-similar variables $(y,s)$  satisfy  \eqref{EP2_1D1-p}--\eqref{EP2_1D4-p} at $s=s_0 = -\log \veps$. 
That is,  
\begin{equation*}
	V_i(s_0)\leq a_iK_i 
\end{equation*}
 for some $a_i\in(0,1)$, $1\leq i\leq 6$.  Let $\aaa :=(a_1, \dots, a_6)$. 
Proposition~\ref{mainprop} indicates that if $U(y,s)\in \mathfrak{X}_{\bold{1}}(\sigma_1)\cap \mathfrak{X}_\aaa(s_0)$, then $U(y,s)\in \mathfrak{X}_\bb(\sigma_1)$ for some $b_i\in (a_i,1)$, $1\leq i\leq 6$. 
 
 We will use a standard continuation argument, along with the local existence theory, Lemma~\ref{criterion}, and the initial conditions \eqref{in-H-C}--\eqref{4.3a-p2}, to show that the solution $U$ exists globally  and satisfies \eqref{close-boot} in Proposition~\ref{mainprop} for all $s\in[s_0,\infty)$. That is, $U \in \mathfrak{X}_\bb(\infty)$.

 To this end, let us define
 \begin{equation*}
 	\sigma:= \sup \{ s\ge s_0 : 
 	U \in \mathfrak{X}_\bb(s) \}.
 \end{equation*} 
 We claim that $\sigma=\infty$. 
 Suppose to the contrary that $\sigma<\infty$. Note that for the corresponding time $t=T_\sigma := s^{-1}(\sigma)$, thanks to \eqref{ys}, it is easy to see that $\sigma<\infty$ implies $\tau(t)>t$  for all $t\in[-\veps,T_\sigma]$. This means that
the map $(x,t)  \to (y,s)$ from $\mathbb{R}\times[-\veps,T_\sigma]$ to $ \mathbb{R}\times [s_0,\sigma]$ is well-defined. 
 Using \eqref{EP2_1D1-p} and \eqref{num_4}, we have 
  \[ | U_y(y,s) | \le  | U_y(y,s)-\overline{U}'(y) | + | \overline{U}'(y) | \le  \frac{y^2}{1000(1+y^2)}+ 2-\frac{6y^2}{5(1+y^2)}\le 2. \]
 This together with the fact that $u_x = e^{s} U_y$ implies that $
 \|u_x(\cdot, t)\|_{L^\infty}\le 2e^{\sigma}<\infty$ for all $t\le T_\sigma$.
 %
 Then, thanks to Lemma~\ref{criterion}, 
 there is a unique extended solution $u\in C([-\ve, T_\sigma+\delta) ; H^5(\mathbb{R} ))$ for some $\delta>0$.
This immediately implies that 
 $U$ can be extended so that $U\in \mathfrak{X}_{\bold{1}}(\sigma+\delta')$ for some $\delta'>0$. Then,   Proposition~\ref{mainprop} implies that $U\in \mathfrak{X}_\bb(\sigma+\delta')$. 
  This contradicts to the definition of $\sigma$, which concludes that  $\sigma=\infty$. Therefore, $U \in \mathfrak{X}_\bb(\infty)$, i.e.,  the solution $U$ exists \emph{globally} and satisfies \eqref{close-boot} in Proposition~\ref{mainprop} for all $s\in[s_0,\infty)$.  

Now we are ready to  prove Theorem~\ref{mainthm}.
\subsection{Proof of Theorem~\ref{mainthm}}\label{C13_subsec-p}
We split the proof into several steps.

Step 1: 
Recall the definition of $T_\ast$ in \eqref{T-star}, i.e., $T_\ast:=  \inf\{ t\ge-\ve : \tau(t)=t\}$. 
For sufficiently small $\ve>0$, thanks to  \eqref{tau} implying $|\dot{\tau}(t)|<1$,  and $\tau(-\veps)=0$, there exists 
	a number $t_\ast<\infty$ such that $\tau(t_\ast)=t_\ast$. This means that  $T_\ast<\infty$ is well-defined. 
	By a standard continuation argument in subsection~\ref{global-conti}, we have shown that  $U\in\mathfrak{X}_\bb(\infty)$. 
	This implies that the solution $u(x,t)$ exists for all $t<T_\ast$, and that   
	\begin{equation*}
		u \in C([-\ve, T_\ast); H^5(\mathbb{R})) \subset  C^4( \mathbb{R} \times [-\veps,T_*) ). 
	\end{equation*}
	(In fact, in Step 2, we prove that $T_\ast$ is the maximal existence time of the smooth solution $u$ by showing that $\| u_x(\cdot, t) \|_{L^\infty} \nearrow \infty$ as $t\nearrow T_*$.)

	Furthermore, we claim that $|T_\ast| = O( \ve^{3})$.
	By \eqref{tau} together with  existence of $\tau(t)$ up to $T_\ast$, we see that  
	\begin{equation*}
		|\dot{\tau}|\leq Ce^{-2s}, \quad t\in [-\ve, T_\ast]. 
	\end{equation*}	
	Using  this together with $\textstyle \frac{ds}{dt} = \frac{1-\dot\tau}{\tau(t) - t} = (1-\dot\tau) e^s$ from \eqref{ys}, we have 
	\[
	|\tau(t)|\leq \int^{t}_{-\veps}|\dot{\tau}(t')|\,dt' \leq C \int^{s}_{-\log\veps} e^{-3s'}\,ds' \leq  C\veps^{3}. 
	\] 
	Since  $\tau(T_\ast)=T_\ast$, we see that $|T_\ast|=O(\veps^{3})$. 
	Moreover, thanks to Lemma~\ref{xilem} and the fact that 
	$\xi, \kappa \in C^1_b([-\ve, T_\ast))$ are continuously differentiable and uniformly bounded,
	we see that $\xi(t)$ and $\kappa(t)$ converge respectively to $\xi(T_\ast)\in\mathbb{R}$ and $\kappa(T_\ast)\in\mathbb{R}$ as $t\nearrow T_\ast$.

	Step 2: We will prove that $\textstyle \sup_{t<T_\ast} \left[ u(\cdot, t) \right]_{C^{3/5}(\Omega)}<\infty$ for any bounded open set $\Omega\subset \mathbb{R}$. 
	Thanks to a standard continuation argument as discussed in subsection~\ref{global-conti}, specifically \eqref{Utildey_M-str} in Proposition~\ref{mainprop} holds globally, i.e.,  
	\begin{equation*}
		\sup_{y \in\mathbb{R}}  \left( (y^{2/5}+1)|U_y(y,s)-\overline{U}'(y)| \right) \le \frac{6}{13}-\theta \quad  \text{ for all } s\ge s_0.
	\end{equation*}
	From this and \eqref{decay-infty-35}, it holds, for all $s\ge s_0$ and $y\in\mathbb{R}$, that 
	\begin{equation}\label{Wydec_fin_2}
		|U_y (y,s) | \le | U_y (y, s) - \UU'(y) | + | \UU'(y) | \le \frac{C}{1+y^{2/5}}+|\overline{U}'(y) |\leq \frac{C}{1+y^{2/5}} 
	\end{equation} 
	for some   constant $C>0$.   
	Using this, for any $y, \wt{y}\in\mathbb{R}$ and for all $s\ge s_0$,  we have 
	\begin{equation}\label{not-zero}
		{\frac{|U(y,s)-U(\wt{y},s)|}{{|y-\wt{y}|}^{3/5}}} =  \frac{1}{{|y-\wt{y}|}^{3/5}} {\left|\int_{\wt{y}}^y U_y(\hat{y},s) d\hat{y} \right|} \le   \frac{C}{{|y-\wt{y}|}^{3/5}}  \left| \int_{\wt{y}}^{y}{(1+\hat{y}^2)^{-1/5}\,d\hat{y}} \right| \lesssim 1,
	\end{equation} 
	where the estimate is uniform in $y, \wt{y} \in \mathbb{R}$ and $s\ge s_0$. 
	Consider any two points $x'\neq \wt{x}' \in \Omega \subset \mathbb{R}$. 
	By the change of variables \eqref{ys} and \eqref{WZPhi-p} together with \eqref{not-zero}, we have 
	\begin{equation*}\label{Eq_holder}
	\frac{|u(x,t) - u(\tilde x, t ) | }{ | x - \tilde x|^\alpha} = \beta^{-1/5} 	\frac{|\hu(x',t)-\hu(\wt{x}',t)|}{{|x'-\wt{x}' |}^{3/5}}= \beta^{-1/5}  \frac{|U(y,s)-U(\wt{y},s)|}{{|y-\wt{y}|}^{3/5}} \lesssim 1
	\end{equation*}
	for all $t\in[-\ve, T_\ast)$. 
	This
	  yields that 
	\begin{equation*}
		\sup_{t<T_\ast} [u(\cdot, t)]_{C^{3/5}(\Omega) } <\infty
	\end{equation*}
	for any bounded open set $\Omega\subset\mathbb{R}$.
	It immediately follows that $\textstyle \sup_{t< T_\ast} [ u(\cdot, t)]_{C^{\alpha}(\Omega) } <\infty$ for any $\alpha \le 3/5$. 

	
	Step 3: 
	Let $x_\ast : =  \beta^{-1/2} \xi(T_\ast)$. Then for any $\alpha> 3/5$ and for any bounded open set $\Omega$, we will prove that
	$\lim_{t\nearrow T_\ast} \left[ u (\cdot, t) \right]_{C^{\alpha}(\Omega)} = \infty$ if $x_\ast \in \Omega$; and $\lim_{t\nearrow T_\ast} \left[ u (\cdot, t) \right]_{C^{\alpha}(\Omega)} < \infty$ if $x_\ast \notin \overline{\Omega}$. 

	We note from  \eqref{ys} and \eqref{WZPhi-p} that   
	\begin{equation*}
		\frac{|\hu(x',t)-\hu(\wt{x}' ,t)|}{|x'-\wt{x}' |^\alpha}=  e^{(\frac{5}{2}\alpha-\frac{3}{2})s}\frac{|U(y,s)-U(\wt{y},s)|}{|y-\wt{y}|^{\alpha}}. 
	\end{equation*} 
	Let $\wt{y}=0$ and $y \in(-\eta_1,\eta_1)\setminus\{0\}$ for some $0<\eta_1 \ll1$. 
	By the mean value theorem,
	\begin{equation*}
		\frac{|U(y,s)-U(0,s)|}{|y|^{\alpha}}=|U_y(\overline{y},s)||y|^{1-\alpha} 
	\end{equation*}
	for some $\overline{y}\in (-\eta_1,\eta_1)$.
	From \eqref{constraint-p} and \eqref{EP2_1D1-p}, we see that $|U_y(\overline{y},s)| \geq 1/2$ for $\eta_1\ll1$, which implies that $|U_y(\overline{y},s)||y|^{1-\alpha}\ge c_0$ for some $c_0>0$. 
	In view of this, for any bounded open set $\Omega$ containing $x_\ast$, 
	 which corresponds to $y=0$, we have 
	\begin{equation}\label{LB-blow}
	\begin{split} 
	[u(\cdot, t)]_{C^{\alpha}(\Omega)}
	& \geq  \beta^{-\frac12(1-\alpha)}   e^{(\frac{5}{2}\alpha-\frac{3}{2})s}  \frac{|U(y,s)-U(0,s)|}{|y|^{\alpha}} 
	\\
	&= \beta^{-\frac12(1-\alpha)}   e^{(\frac{5}{2}\alpha-\frac{3}{2})s} |U_y(\overline{y},s)||y|^{1-\alpha}  \ge  c_0   \beta^{-\frac12(1-\alpha)}  e^{(\frac{5}{2}\alpha-\frac{3}{2})s}. 
	\end{split}
	\end{equation}
	Since $\alpha>3/5$, we see that $[u(\cdot, t)]_{C^{\alpha}(\Omega)}  \to \infty$ as $t\nearrow T_\ast$, i.e., $s\to \infty$. 
	 From this, we deduce that $x_\ast =\beta^{-1/2}\xi(T_*)$ is the blow up location.

	
	On the other hand, for any bounded open set $\Omega$ such that $\overline{\Omega} \not\owns x_\ast$, we prove that $\textstyle \lim_{t\nearrow T_\ast} \left[ u (\cdot, t) \right]_{C^{\alpha}(\Omega)} <\infty$.  
	Set  $d_\ast:= d(x_\ast,\Omega) =\inf_{x\in\Omega } |x-x_\ast|>0$. One can check by \eqref{w-u-x-xp} and \eqref{ys} that for any $x\in \Omega$,  we see that the corresponding $\textstyle y = \frac{\sqrt{\beta} x - \xi(t)}{(\tau(t) - t)^{5/2}}$  satisfies $\textstyle y\geq \frac12 \sqrt{\beta}d_\ast e^{5s/2} = c_1 e^{5s/2}$ for $t$ sufficiently close to $T_\ast$.  
	This together with \eqref{Wydec_fin_2} gives 
	\begin{equation*}
		|U_y(y,s)|\leq Cy^{-2/5}=Cy^{\alpha-1}y^{{3/5}-\alpha}\leq Cy^{\alpha-1}e^{(\frac{3}{2}-\frac{5}{2}\alpha)s}
	\end{equation*}
	 for any $\alpha>3/5$. 
	Hence, we obtain
	\begin{equation*}
		\begin{split}
			[u (\cdot,t)]_{C^\alpha(\Omega)} &: = \sup_{x, \tilde x \in\Omega, x\ne  \tilde x} \frac{|u(x,t) - u(\tilde x, t ) | }{ | x - \tilde x|^\alpha}
			\\
			& = C \sup_{y,\wt{y}\in Y_t(\Omega), y\neq \wt{y}}e^{(\frac{5}{2}\alpha-\frac{3}{2})s}\frac{\left|\int^y_{\wt{y}}U_y(\hat{y},s)\,d\hat{y}\right|}{|y-\wt{y}|^{\alpha}}
			\\
			&\leq C\sup_{y,\wt{y}\in Y_t(\Omega), y\neq \wt{y}}\frac{\left|\int^y_{\wt{y}}\hat{y}^{\alpha-1}\,d \hat{y}\right|}{|y-\wt{y}|^{\alpha}}
			\le C \sup_{y,\wt{y}\in Y_t(\Omega), y\neq \wt{y}}\frac{|y|^{\alpha}-|\wt{y}|^{\alpha}}{|y-\wt{y}|^{\alpha}}<\infty,
		\end{split}
	\end{equation*} 
	where $\textstyle Y_t:  x  \to  y : = \frac{\sqrt{\beta} x - \xi(t)}{(\tau(t) - t)^{5/2}}$ is a map from $\mathbb{R}$ to $\mathbb{R}$, and in the last inequality, we used the fact that  $ | y - \wt{y} | \ge 1$ for $t$  sufficiently close to $T_\ast$ if $x \ne \tilde x$ and $x, \tilde x \in\Omega$. 
	This proves that $\textstyle \lim_{t\nearrow T_\ast} \left[ u (\cdot, t) \right]_{C^{\alpha}(\Omega)} <\infty$ for any $\overline{\Omega} \not\owns x_\ast$ and $\alpha>3/5$. 
This also implies that the blow-up location $x_\ast$ is unique. 
	
	Step 4: We shall prove the temporal blow-up rate \eqref{blow-up-rate}, i.e., 
	  for $x_\ast\in \Omega$ and $\alpha>3/5$,
				\begin{equation*} \left[ u(\cdot, t) \right]_{C^\alpha(\Omega)}\sim (T_\ast-t)^{-\frac{5\alpha-3}{2}}
				\end{equation*} 
				for all $t$ sufficiently close to $T_\ast$.
	To this end, we first note that  
	\begin{equation*}
		|U(y,s)-U(\wt{y},s)|\leq \int^y_{\wt{y}}|U_y(\hat{y},s)|\,d\hat{y} \leq C\int^y_{\wt{y}}(1+\hat{y}^2)^{(\alpha-1)/2}\,d\hat{y} \leq C|y-\wt{y}|^{\alpha},
	\end{equation*}
	where in the second inequality, we have used \eqref{Wydec_fin_2}, i.e.,  
	\begin{equation*}
		|U_y(y,s)|\leq C(1+y^2)^{-1/5}\leq C(1+y^2)^{(\alpha-1)/2}
	\end{equation*} 
	for $\alpha>3/5$.
	Then again from \eqref{ys} and \eqref{WZPhi-p}, we have for any $x', \wt{x}' \in \mathbb{R}$, 
	\begin{equation}\label{u_Calp2}
		\frac{|\hu(x',t)-\hu(\wt{x}', t )|}{|x'-\wt{x}' |^\alpha}=e^{(\frac{5}{2}\alpha-\frac{3}{2})s}\frac{|U(y,s)-U(\wt{y},s)|}{|y-\wt{y}|^{\alpha}}\leq Ce^{(\frac{5}{2}\alpha-\frac{3}{2})s}
	\end{equation}
	for some $C>0$. 
	Note from \eqref{tau} that
	\begin{equation*}
		\frac{1}{2}(T_*-t)\leq \tau(t)-t=\int^{T_*}_t 1-\dot{\tau}(t')  \, dt'\leq \frac{3}{2}(T_*-t).
	\end{equation*}
	This, together with \eqref{LB-blow} and \eqref{u_Calp2}, yields the desired blow-up rate of $u$ as 
	\begin{equation}\label{pf-br}
		[u(\cdot,t)]_{C^\alpha} = \beta^{-\frac12(1-\alpha)}   [w(\cdot,t)]_{C^\alpha} \sim(T_*-t)^{-(\frac{5}{2}\alpha-\frac{3}{2})}.
	\end{equation} 
	In particular, from \eqref{pf-br}, we see that the blow up rate of $\|\partial_xu(\cdot,t)\|_{L^{\infty}}$ is $(T_*-t)^{-1}$.
	This completes the proof of Theorem~\ref{mainthm}. 
	\qed

\section{Closure of Bootstrap assumptions}\label{sec3-boot}
In Lemma \ref{U30lem}, we show \eqref{4} closing the bootstrap assumption \eqref{EP2_1D2-p}. 
\begin{lemma}\label{U30lem}
Assume that \eqref{Boot_2-p} holds. Then for sufficiently small $\ve>0$, it holds that 
	\begin{equation}\label{str_U3-p}
		|\partial_y^3U(0,s)-256|\leq C\veps^{\deltaz},  	\quad s\in [s_0,\sigma_1],
	\end{equation}
where $\deltaz>0$ is a constant as in \eqref{init_w_3-p}. 
\end{lemma}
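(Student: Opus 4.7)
The plan is to extract a scalar ODE for $F(s):=\partial_y^3 U(0,s)$ by evaluating \eqref{CH_diff_3} at $y=0$, and to show by an integrating-factor argument that $F$ cannot drift from its initial value $F(s_0)=256$ (guaranteed by \eqref{1D2}) by more than $O(\varepsilon^{\deltaz})$. The constraints \eqref{constraint-p}, namely $U(0,s)=0$, $U_y(0,s)=-2$ and $U_{yy}(0,s)=0$, are what makes this possible: they kill both the $U_{yy}^2$ term on the right-hand side of \eqref{CH_diff_3} and all $U_{yy}$-linear pressure-coupling terms there, reducing the equation at $y=0$ to
\begin{equation*}
F'(s) + a(s)\,F(s) + \mathcal{V}(0,s)\,\partial_y^4 U(0,s) \;=\; -\frac{e^{-2s}}{(1-\dot\tau)\beta}\bigl(P_{yy}(0,s) - 8 e^{-3s}\bigr),
\end{equation*}
where $a(s):=6+\frac{3 U_y(0,s)}{1-\dot\tau}=\frac{-6\dot\tau}{1-\dot\tau}$ and $\mathcal{V}(0,s)=\frac{e^{3s/2}(\kappa-\dot\xi)}{1-\dot\tau}$.

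Each driving term is then controlled using the modulation bounds and the bootstrap. From \eqref{tau} we get $|a(s)|\lesssim e^{-2s}$, hence $\int_{s_0}^\infty|a|\lesssim \varepsilon^2$. Combining \eqref{temp_1-p} with the bootstrap bound $|\partial_y^4 U(0,s)|\leq M$ from \eqref{EP2_1D4-p} yields $|\mathcal{V}(0,s)\,\partial_y^4 U(0,s)|\lesssim M e^{-s}$, integrating to $\lesssim M\varepsilon$. For the pressure forcing, we evaluate \eqref{CH_P} at $y=0$: together with $U_y(0,s)=-2$, the uniform bound $|P(0,s)|\leq C\beta$ from Lemma~\ref{P-Py-lem}, and the modulation bound $|\kappa|\leq C\sqrt\beta$ from \eqref{xibound}, this produces $P_{yy}(0,s)=-2 e^{-3s}+O(e^{-5s})$. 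Substituting back, the pressure forcing equals $\frac{10\, e^{-5s}}{(1-\dot\tau)\beta}+O\!\left(\frac{e^{-7s}}{\beta}\right)$, and its time-integral is bounded by $\beta^{-1} e^{-5s_0}\lesssim \varepsilon^{\deltaz}$ thanks to \eqref{beta-exp}.

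Setting $G(s):=F(s)-256$, so that $G(s_0)=0$, and solving
\begin{equation*}
G'(s) + a(s)\,G(s) \;=\; -256\,a(s) \;-\; \mathcal{V}(0,s)\,\partial_y^4 U(0,s) \;-\; \frac{e^{-2s}}{(1-\dot\tau)\beta}\bigl(P_{yy}(0,s) - 8 e^{-3s}\bigr)
\end{equation*}
by variation of constants, the integrating factor $\exp\bigl(\int_{s_0}^s a(s')\,ds'\bigr)=1+O(\varepsilon^2)$ is essentially harmless, and the combined forcing integral is at most $C(\varepsilon^2 + M\varepsilon + \varepsilon^{\deltaz})\lesssim \varepsilon^{\deltaz}$, since $\varepsilon\leq \varepsilon^{\deltaz}$ for $\varepsilon,\deltaz\in(0,1)$. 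This gives the desired bound $|G(s)|\leq C\varepsilon^{\deltaz}$. The subtle technical point is the bookkeeping of the coupled $\beta^{-a} e^{-bs}$ factors: the worst term $\beta^{-1} e^{-5s}$ sits precisely at the margin permitted by \eqref{beta-exp}, so the extra $e^{-2s}$ prefactor inherited from the self-similar rescaling of the CH pressure nonlinearity is what saves the estimate.
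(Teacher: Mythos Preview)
Your proof is correct and follows essentially the same route as the paper's: evaluate \eqref{CH_diff_3} at $y=0$ using the constraints \eqref{constraint-p}, identify the damping coefficient $a(s)=-\frac{6\dot\tau}{1-\dot\tau}$, bound the transport contribution $\mathcal{V}(0,s)\partial_y^4 U(0,s)$ via \eqref{temp_1-p} and \eqref{EP2_1D4-p}, compute $P_{yy}(0,s)$ from \eqref{CH_P}, and control the worst forcing term $\beta^{-1}e^{-5s}$ through \eqref{beta-exp}. The only cosmetic difference is that the paper bounds $|\partial_s\partial_y^3 U(0,s)|\leq Ce^{-\deltaz s}$ pointwise and integrates, whereas you phrase it as variation of constants with an integrating factor $\exp\!\bigl(\int a\bigr)=1+O(\varepsilon^2)$; the two are equivalent.
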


\begin{proof}
	Evaluating \eqref{CH_diff_3} at $y=0$ with \eqref{constraint-p}, we obtain
	\begin{equation*}
		\partial_s \partial_y^3U(0, s) + D^U_3(s) \partial_y^3U(0, s) = F^U_3(s),
	\end{equation*} 
	where
	\begin{subequations}
		\begin{align*}
			D^U_3(s) &:=  - \frac{6\dot{\tau}}{1-\dot{\tau}},
			\\
			F^U_3(s) &:= -\frac{e^{3s/2}(\kappa-\dot{\xi})}{1-\dot{\tau}}\partial_y^4U(0, s) -\frac{e^{-2s}}{1-\dot{\tau}}\left(\frac{1}{\beta}P_{yy} (0, s) -\frac{8}{\beta}e^{-3s}\right).
		\end{align*}
	\end{subequations}
	Then using \eqref{tau} and \eqref{dottau}, $D^U_3$ is estimated as  
	\begin{equation} \label{DU3}
		| D^U_3 (s) |\leq 6(1+\veps)|\dot{\tau}|\leq  C e^{-2s}. 
	\end{equation} 
	 Also, from \eqref{CH_P} and \eqref{constraint-p}, 
	 we see that 
	  $P_{yy}( 0, s) = \beta^{-1}e^{-5s} (P (0, s)-\kappa^2) -  2\gamma\beta^{-1/2} \kappa e^{-5s} - 2e^{-3s}$. 
	  Using this, we have
	\begin{equation}\label{FU3}
		\begin{split}
			|F^U_3|&=\left|\frac{e^{3s/2}(\kappa-\dot{\xi})}{1-\dot{\tau}}\partial_y^4U (0, s) +\frac{e^{-2s}}{1-\dot{\tau}}\left(\frac{e^{-5s}}{\beta^2}(P (0, s) -\kappa^2)-\frac{2\gamma\kappa}{\sqrt{\beta}}e^{-5s}-\frac{10}{\beta}e^{-3s}\right)\right|  
			\\
			&\leq Ce^{-s}+Ce^{-7s}+C\beta^{-1}e^{-5s} \leq Ce^{-\deltaz s}, 
		\end{split}
	\end{equation}
	where we have used \eqref{dottau}, \eqref{P-Py}, \eqref{EP2_1D4-p},  \eqref{xibound} and \eqref{temp_1-p} in the first inequality, and \eqref{beta} in the last inequality. 
	By \eqref{EP2_1D2-p}, \eqref{DU3} and \eqref{FU3}, we have 
	\begin{equation*}
		\left|\partial_s \partial_y^3U(0,s)\right|\leq \left|D^U_3(s) \partial_y^3 U(0,s) \right|+\left|F^U_3 (s) \right|\leq  Ce^{-\deltaz s}, 
	\end{equation*} 
	which yields 
	\begin{equation*} 
		 \left|\partial_y^3U (0, s)-256\right|  = \left|  \partial_y^3U (0, s) - \partial_y^3U (0, s_0 )  \right|  \leq \int_{s_0}^{s} \left|\partial_{s'} \partial_y^3 U (0, s' ) \right|\,ds' \leq C\int_{s_0}^s {e^{-\deltaz s'}\,ds'}\leq C\veps^{\deltaz}. 
	\end{equation*}
	This  gives \eqref{str_U3-p}.
\end{proof}

	Next we show \eqref{1}, which closed  \eqref{EP2_1D1-p}.
\begin{lemma}\label{Wy1_lem_p}
Assume that \eqref{Boot_2-p} holds. Then for sufficiently small $\ve>0$, it holds that 
	\begin{equation}
		|U_y(y,s)-\overline{U}'(y)|\leq \frac{y^2}{1500(1+y^2)} \label{Burgers_B.1-p}
	\end{equation}
	for all $y\in\mathbb{R}$ and $s\in[s_0,\sigma_1]$.
\end{lemma}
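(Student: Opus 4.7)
The plan is to derive a damped transport equation for the difference $\tilde U_y := U_y - \overline U'$ and close a weighted $L^\infty$ bound by a first-crossing (maximum-principle) argument. Subtracting the profile identity (which, via \eqref{Ub-eq}, reads $\overline U' + \tfrac12(\overline U')^2 + (\overline U + \tfrac{5y}{2})\overline U'' = 0$) from \eqref{CH_diff_1}, and setting $\mathcal V_0 := \overline U + \tfrac{5y}{2}$, $\tilde U := U - \overline U$, one obtains
\begin{equation*}
\partial_s \tilde U_y \,+\, D\, \tilde U_y \,+\, \mathcal V\, \tilde U_{yy} \;=\; G,
\end{equation*}
with damping $D := 1 + \tfrac{U_y + \overline U'}{2(1-\dot\tau)}$ and forcing
\begin{equation*}
G \;:=\; F_1^U \;-\; \tfrac{\dot\tau}{2(1-\dot\tau)}(\overline U')^2 \;-\; (\mathcal V - \mathcal V_0)\,\overline U'', \qquad \mathcal V - \mathcal V_0 \,=\, \tfrac{\tilde U + \dot\tau\,\overline U + e^{3s/2}(\kappa - \dot\xi)}{1-\dot\tau}.
\end{equation*}
Because $\tilde U_y(0,s) = \tilde U_{yy}(0,s) = 0$ by \eqref{constraint-p} and \eqref{UU-0}, the rescaled unknown
\begin{equation*}
q(y,s) \;:=\; \tfrac{1+y^2}{y^2}\,\tilde U_y(y,s)
\end{equation*}
extends continuously to $y=0$ with $q(0,s) = \tfrac12(\partial_y^3 U(0,s) - 256)$, and a direct computation gives
\begin{equation*}
\partial_s q \,+\, \tilde D\, q \,+\, \mathcal V\, q_y \;=\; \tilde G, \qquad \tilde D \,:=\, D + \tfrac{2\mathcal V}{y(1+y^2)}, \quad \tilde G \,:=\, \tfrac{1+y^2}{y^2}\, G.
\end{equation*}

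Next I would run a first-crossing argument on $\|q(\cdot,s)\|_{L^\infty}$. The initial bound \eqref{W-y2} gives $\|q(\cdot,s_0)\|_{L^\infty} \le 1/3000$. Using the bootstrap \eqref{Boot_2-p}, Lemma \ref{P-Py-lem}, and Lemma \ref{xilem}, I would show that $\tilde D$ equals the profile damping $\overline D(y) := 1 + \overline U'(y) + \tfrac{2}{1+y^2}\bigl(\tfrac52 + \tfrac{\overline U(y)}{y}\bigr)$ up to an $O(\ve^{\deltaz})$ perturbation, and that, after integrating the bootstrap \eqref{EP2_1D1-p} to bound $|\tilde U|$,
\begin{equation*}
|\tilde G(y,s)| \;\le\; \tfrac{1}{1000}\,\tfrac{1+y^2}{y^2}\,|\overline U''(y)|\int_0^{|y|}\tfrac{y'^2}{1+y'^2}\,dy' \;+\; C\ve^{\deltaz},
\end{equation*}
the main term coming from $-(\mathcal V - \mathcal V_0)\overline U''$. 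Suppose for contradiction that $s_* \in (s_0,\sigma_1]$ is a first time at which $|q(y_*,s_*)| = 1/1500$ for some $y_* \in \mathbb{R}$. If $|y_*| > 0$, then $q_y(y_*,s_*) = 0$ and $\partial_s |q|(y_*,s_*) \ge 0$, so the $q$-equation enforces
\begin{equation*}
\overline D(y_*) \;\le\; \tfrac{3}{2}\,\tfrac{1+y_*^2}{y_*^2}\,|\overline U''(y_*)|\int_0^{|y_*|}\tfrac{y'^2}{1+y'^2}\,dy' \;+\; C\ve^{\deltaz}.
\end{equation*}
Choosing $\lambda,\delta$ in Lemma \ref{lem-ubar} so that $\lambda/\delta \ge 3/2$, inequality \eqref{num_6} delivers $\overline D(y_*) \ge \tfrac{y_*^2}{500(1+y_*^2)} + \tfrac32 \tfrac{1+y_*^2}{y_*^2}|\overline U''(y_*)|\int_0^{|y_*|}\tfrac{y'^2}{1+y'^2}dy'$, which contradicts the preceding inequality once the slack $\tfrac{y_*^2}{500(1+y_*^2)}$ absorbs the $C\ve^{\deltaz}$-error — which is the case for $|y_*|$ bounded below and $\ve$ small enough.

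The main obstacle is the degeneracy of $\overline D$ at $y=0$, where it vanishes like $y^2/(1+y^2)$: the $\tfrac{y^2}{500(1+y^2)}$ slack in \eqref{num_6} cannot absorb the $O(\ve^{\deltaz})$-errors in a shrinking neighborhood of the origin, so the above contradiction breaks down there. I would resolve this by a separate near-origin estimate. Using $\tilde U_y(0,s) = \tilde U_{yy}(0,s) = 0$, Lemma \ref{U30lem} (which gives $|\partial_y^3 U(0,s) - 256|\le C\ve^{\deltaz}$), and the bootstrap $\|\partial_y^4 U\|_{L^\infty}\le M$ from \eqref{EP2_1D4-p}, Taylor's theorem yields
\begin{equation*}
|\tilde U_y(y,s)| \;\le\; \tfrac12\, y^2\,|\partial_y^3 U(0,s) - 256| \,+\, \tfrac16 M|y|^3 \;\le\; C\ve^{\deltaz} y^2 + C M|y|^3,
\end{equation*}
hence $|q(y,s)| \le C\ve^{\deltaz} + CMl$ on $\{|y|\le l\}$. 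Choosing $l$ small so that $CMl < 1/3000$ and then $\ve$ sufficiently small makes $|q| < 1/1500$ on $\{|y|\le l\}$, which both rules out a first crossing at $y_* = 0$ and furnishes a boundary value at $|y|=l$ for the far-region argument. At infinity, $|\overline U''(y)|(1+y^2)/y^2 \int_0^{|y|} \tfrac{y'^2}{1+y'^2}\,dy' \to 0$ like $|y|^{-2/5}$ by \eqref{asymp-y-infty}, while $\overline D \to 1$, so the supremum of $|q|$ cannot concentrate at infinity either. Combining the near-origin estimate with the far-regime maximum-principle contradiction then closes the bootstrap to the improved bound \eqref{Burgers_B.1-p}.
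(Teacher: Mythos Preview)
Your strategy matches the paper's: you study the same weighted quantity $q=\frac{1+y^2}{y^2}\tilde U_y$ (the paper calls it $Z$), handle the near-origin region $\{|y|\le l\}$ by Taylor expansion using Lemma~\ref{U30lem} and \eqref{EP2_1D4-p}, and run a maximum-principle argument on $\{|y|\ge l\}$. The paper packages the latter as an application of Theorem~\ref{max_2}, writing the $\tilde U\,\overline U''$ contribution as a kernel integral $\int Z(y',s)K^Z(y,s;y')\,dy'$ rather than absorbing it into the forcing.

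There is, however, a quantitative gap in your first-crossing step. You bound $|\tilde U|$ via the \emph{bootstrap} \eqref{EP2_1D1-p}, which gives the prefactor $\tfrac{1}{1000}$ in $|\tilde G|$; at the crossing level $\tfrac{1}{1500}$ this forces the comparison
\[
\overline D(y_*) \;\ge\; \tfrac{3}{2}\,\tfrac{1+y_*^2}{y_*^2}\,|\overline U''(y_*)|\int_0^{|y_*|}\tfrac{y'^2}{1+y'^2}\,dy' \;+\; O(\ve^{\deltaz}),
\]
i.e.\ \eqref{num_6} with $\lambda/\delta\ge 3/2$. But Lemma~\ref{lem-ubar} only asserts the \emph{existence} of some $\lambda>1$ and $\delta\in(0,1)$, and its proof (Lemma~\ref{5.3}) is carried out for $\lambda/\delta=1.01$; you cannot simply ``choose'' $\lambda/\delta\ge 3/2$. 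The fix is to use the crossing level itself: at the first time $s_*$ one has $\|q(\cdot,s_*)\|_{L^\infty}=\tfrac{1}{1500}$, hence $|\tilde U(y,s_*)|\le\tfrac{1}{1500}\int_0^{|y|}\tfrac{y'^2}{1+y'^2}\,dy'$, and the needed inequality reduces to $\overline D\ge[\text{integral}]+O(\ve)$, for which any $\lambda/\delta>1$ suffices. This is precisely why the paper writes the $\tilde U$ term as a kernel in $Z$ and invokes Theorem~\ref{max_2} under the hypothesis $\int|K^Z|\,dy'\le\delta D^Z$ with $\delta<1$.
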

\begin{proof}
	Let $\wt{U} := U- \overline{U}$.  Then, from \eqref{Ueq-int}, we have the equation for $\wt{U}(y,s)$: 
	\begin{equation}\label{Eq_diff-p}
		\begin{split}
			\partial_s \wt{U}_y + \left(1 + \frac{\wt{U}_y + 2\overline{U}'}{2(1-\dot{\tau})}\right) \wt{U}_y + \mathcal{V} \wt{U}_{yy}  
			&=-\frac{e^{-2s}}{1-\dot{\tau}}\left(\frac{P- (e^{-3s/2}U+\kappa)^2}{\beta}-\frac{2\gamma}{\sqrt{\beta}} (e^{-3s/2}U+\kappa) \right)
			\\
			&-\frac{\dot{\tau}(\overline{U}')^2}{2(1-\dot{\tau})}-\left(\frac{\wt{U}}{1-\dot{\tau}}+\frac{\dot{\tau}\overline{U}}{1-\dot{\tau}}+\frac{e^{3s/2}(\kappa-\dot{\xi})}{1-\dot{\tau}}\right)\overline{U}'', 
		\end{split}
	\end{equation}
	where $\mathcal{V}$ is defined in \eqref{V}. 
	Defining a new weighted function
	\[Z (y,s) :=\frac{y^2+1}{y^2}\wt{U}_y(y,s),\]
	we have from \eqref{Eq_diff-p} that 
	\begin{equation*}
		\partial_s Z +D^Z (y,s) Z +\mathcal{V}(y,s) Z_y = F^Z(y,s)+\int_{\mathbb{R}} Z(y',s) K^Z(y,s;y') \,dy',
	\end{equation*}
	where
	\begin{subequations}
		\begin{align*}
			&D^Z(y,s) := 1+\frac{\wt{U}_y+2\overline{U}'}{2(1-\dot{\tau})}+\frac{2}{y(1+y^2)}\mathcal{V},
			\\
			& F^Z(y,s) := -\frac{e^{-2s}}{1-\dot{\tau}}\frac{y^2+1}{y^2} \left(\frac{P- (e^{-3s/2}U+\kappa)^2}{\beta}-\frac{2\gamma}{\sqrt{\beta}} (e^{-3s/2}U+\kappa) \right)
			-\frac{y^2+1}{y^2}\frac{\dot{\tau}(\overline{U}')^2}{2(1-\dot{\tau})}
			\\
			& \hspace{6cm} -\frac{y^2+1}{y^2}\left(\frac{\dot{\tau}\overline{U}}{1-\dot{\tau}}+\frac{e^{3s/2}(\kappa-\dot{\xi})}{1-\dot{\tau}}\right)\overline{U}'',
			\\
			& K^Z(y,s;y') := -\frac{1}{1-\dot{\tau}}\frac{y^2+1}{y^2}\overline{U}''(y)\mathbb{I}_{[0,y]}(y')\frac{y'^2}{1+y'^2}.
		\end{align*}
	\end{subequations}
	
	First, let us estimate $Z$ near $y=0$.
	By the Taylor expansion, 
	\begin{equation*}
		U_y(y,s)=-2+\frac{y^2}{2}\partial_y^3U(0,s)+\frac{y^3}{6}\partial_y^4U(y',s) \quad \text{ for some } |y'|<|y|<l,
	\end{equation*} 
	where $l=1/10M>0$. 
	Using \eqref{EP2_1D4-p} and \eqref{str_U3-p}, we have
	\begin{equation*}
		\begin{split}
			| U_y(y, s) + 2 - 128 y^2 | 
			& = \left| \frac{y^2}{2}\left(\partial_y^3U( 0, s) - 256 \right)+\frac{y^3}{6}\partial_y^4U(y',s)\right|
			\leq C\veps^{ \deltaz }  y^2 + \frac{M}{6}  |y|^3  \leq y^2\left( C \veps^{ \deltaz }+ \frac{ M | l | }{ 6  } \right)
		\end{split}
	\end{equation*}
	for all $| y | \le l$. 
	By this together with \eqref{U_taylor_small},  we have for sufficiently small $l=l(M)>0$, 
	\begin{equation*}
		\begin{split}
			|U_y(y, s) -\overline{U}'(y) |&\leq y^2\left(C\veps^{\deltaz}+\frac{M|l|}{6}\right)+C\cdot y^4
			\\
			&\leq y^2\left(C\veps^{\deltaz}+\frac{M|l|}{6}+C\cdot l^2\right)\leq \frac{ y^2}{3000(1+y^2)} \quad \text{for}\quad |y|\leq l, 
		\end{split}
	\end{equation*}
	which yields 
	\begin{equation}\label{V-l-p}
		|Z(y,s)|\leq \frac{1}{3000}  \quad \text{for} \quad |y|\leq l.
	\end{equation}
	
	Now, 
	we recall \eqref{num_2}: 
	\begin{equation*}
		1+\overline{U}'+\frac{2}{y(1+y^2)}\left(\frac{5y}{2}+\overline U\right)\geq \frac{y^2}{5(1+y^2)}, \quad y\in\mathbb{R}.
	\end{equation*}
	Let us examine $D^Z$ as follows: 
	\begin{equation}\label{DV}
		\begin{split}
			D^Z &= 1+\frac{\wt{U}_y+2\overline{U}'}{2(1-\dot{\tau})}+\frac{2}{y(1+y^2)}\left(\frac{\overline{U}+\wt{U}}{1-\dot{\tau}}+\frac{5y}{2}+\frac{e^{3s/2}(\kappa-\dot{\xi})}{1-\dot{\tau}}\right)
			\\
			&\geq \left(1+\overline{U}'+\frac{2}{y(1+y^2)}\left(\frac{5y}{2}+\overline{U}\right)\right)-\left|\frac{\wt{U}_y+2\dot{\tau}\overline{U}'}{2(1-\dot{\tau})}+\frac{2}{y(1+y^2)}\left(\frac{\wt{U}+\dot{\tau}\overline{U}}{1-\dot{\tau}}+\frac{e^{3s/2}(\kappa-\dot{\xi})}{1-\dot{\tau}}\right)\right|.
		\end{split}
	\end{equation}
	Here using \eqref{tau}, \eqref{EP2_1D1-p} and $|\overline{U}'|\leq 2$ from \eqref{Wbar_neg}, we see that 
	\begin{equation}\label{DV1}
		\left|\frac{\wt{U}_y+2\dot{\tau}\overline{U}'}{2(1-\dot{\tau})}\right|\leq \frac{1+\veps}{2}\left(\frac{ y^2}{1000(1+y^2)}+O(\veps)\right). 
	\end{equation}
	 We use \eqref{tau} and  \eqref{temp_1-p} to have  
	\begin{equation}\label{DV2}
		\begin{split}
			&\left|\frac{2}{y(1+y^2)}\frac{\wt{U}}{1-\dot{\tau}}\right|+\left|\frac{2}{y(1+y^2)}\left(\frac{\dot{\tau}\overline{U}}{1-\dot{\tau}}+\frac{e^{3s/2}(\kappa-\dot{\xi})}{1-\dot{\tau}}\right)\right|
			\\
			&\qquad \leq (1+\veps)\left|\frac{2}{y(1+y^2)}\right|\left|\frac{1}{1000} \int^{y}_0\frac{y'^2}{1+y'^2}\,dy'\right|+(1+\veps)\left|\frac{2}{y(1+y^2)}\right| \left(|\dot{\tau}\overline{U}|+|e^{3s/2}(\kappa-\dot{\xi})| \right)
			\\
			&\qquad \leq \left|\frac{1}{500y(1+y^2)}\right|\left|\int^{y}_0\frac{y'^2}{1+y'^2}\,dy'\right|+O(\veps)
			\\
			&\qquad \leq \frac{y^2}{1500(1+y^2)}+O(\veps), \qquad |y|\geq l.
		\end{split}
	\end{equation}
	Here in the first inequality we have used the fact that 
	\begin{equation}\label{U-x2}
	 | \wt{U}(y,s) | \le \int_0^y |  \wt{U}_y (y', s) | dy'  \le \frac{1}{1000} \int_0^y \frac{y'^2}{1+y'^2 } dy',  
	 \end{equation}
	which follows from $\wt{U}(0,s)=0$ and \eqref{EP2_1D1-p}. 
	Hence, 
	\begin{equation}\label{damp-p}
		\begin{split}
			D^Z (y, s) &\geq \frac{ y^2}{5(1+y^2)}-\frac{y^2}{2000(1+y^2)}-\frac{y^2}{1500(1+y^2)}-O(\veps) 
			\geq \frac{l^2}{10(1+l^2)}, \qquad |y|\geq l.
		\end{split}
	\end{equation}
	%
	%
	Next,  we compare $K^Z$ with $D^Z$. 
	From \eqref{num_6}, we have 
	\begin{equation*}
		\begin{split}
			\int_\mathbb{R}{| K^Z (y,s;y')|\,dy'}&\leq\frac{|\overline{U}''(y)|}{1-\dot{\tau}}\frac{y^2+1}{y^2}\int^{|y|}_0{\frac{y'^2}{1+y'^2 }\,dy'}
			\\
			&\leq \delta \left(1+\overline{U}'+\frac{2}{y(1+y^2)}\left(\frac{5y}{2}+\overline{U}\right)-\frac{y^2}{500(1+y^2)}\right) 
			\leq   \delta D^Z(y,s), \quad |y|\geq l 
		\end{split}
	\end{equation*}
	for some constant $\delta \in (0,1)$. Here, we have used \eqref{DV}--\eqref{DV2} in the last inequality.

	On the other hand, using \eqref{tau}, we see that 
	\begin{equation*}
		\begin{split}
			&	\|F^Z (\cdot,s)\|_{L^{\infty}(|y|\geq l)}
			\\
			&\leq \left\|\frac{e^{-2s}}{1-\dot{\tau}}\frac{y^2+1}{y^2} \left(\frac{P- (e^{-3s/2}U+\kappa)^2}{\beta}-\frac{2\gamma}{\sqrt{\beta}} (e^{-3s/2}U+\kappa) \right)
			\right\|_{L^{\infty}(|y|\geq l)}
			\\
			&\quad +\left\|\frac{y^2+1}{y^2}\frac{\dot{\tau}(\overline{U}')^2}{2(1-\dot{\tau})}+\frac{y^2+1}{y^2}\left(\frac{\dot{\tau}\overline{U}}{1-\dot{\tau}}+\frac{e^{3s/2}(\kappa-\dot{\xi})}{1-\dot{\tau}}\right)\overline{U}''\right\|_{L^{\infty}(|y|\geq l)}
			\\
			&\leq Ce^{-2s}\frac{l^2+1}{l^2}\left\| \frac{P- (e^{-3s/2}U+\kappa)^2}{\beta}-\frac{2\gamma}{\sqrt{\beta}} (e^{-3s/2}U+\kappa) \right\|_{L^{\infty}(\mathbb{R})} 
			\\
			&\quad+C\frac{l^2+1}{l^2}\left(|\dot{\tau}|\|\overline{U}'\|_{L^{\infty}(\mathbb{R})}^2+\left(|\dot{\tau}|\|\overline{U}\|_{L^{\infty}(\mathbb{R})}+e^{3s/2}|\kappa-\dot{\xi}|\right)\|\overline{U}''\|_{L^{\infty}(\mathbb{R})}\right).
		\end{split}
	\end{equation*}
	We shall estimate the right-hand side term by term as follows. 
	Thanks to \eqref{hat_up}, we have
	\begin{equation*}
		e^{-2s}\left\| \frac{P- (e^{-3s/2}U+\kappa)^2}{\beta}-\frac{2\gamma}{\sqrt{\beta}} (e^{-3s/2}U+\kappa) \right\|_{L^{\infty}(\mathbb{R})}\leq Ce^{-2s}, 
	\end{equation*}
	and by \eqref{U-rough}, \eqref{tau} and the fact that $|y\overline{U}''(y)|=2(2+\overline{U}')^{1/2}(-y^{2/7}\overline{U}')^{7/2}\leq C$, which is from \eqref{U_taylor_far} and \eqref{U''_TEMP}, we get
	\begin{equation*}
		|\dot{\tau}|\|\overline{U}(y) \overline{U}''(y) \|_{L^{\infty}(\mathbb{R})}\leq Ce^{ - 2s }.
	\end{equation*}
	In addition, thanks to  
	\eqref{asymp-y-infty}, \eqref{Wbar_neg}, \eqref{tau} and \eqref{temp_1-p}, we have  
	\begin{equation*}
		|\dot{\tau}|\|\overline{U}'\|_{L^{\infty}(\mathbb{R})}^2 + e^{3s/2}|\kappa-\dot{\xi}|\|\overline{U}''\|_{L^{\infty}(\mathbb{R})} \leq Ce^{-s}.
	\end{equation*}
	Combining all, for sufficiently small $\ve>0$,  we get
	\begin{equation*}
		\|F^Z (\cdot,s)\|_{L^{\infty}(|y|\geq l)}\leq C e^{- s} \leq C\veps.
	\end{equation*} 
	On the other hand, by {\eqref{4.3a-p-w}, i.e., \eqref{Utildey_M-p} and \eqref{W-y2}}, we have 
	\begin{equation}\label{Wy_claim}
		|Z(y,s_0)|\leq \frac{1}{3000}, \qquad \limsup_{|y|\rightarrow \infty}|Z (y,s)|=0,
	\end{equation} 
	respectively. 
	Appealing to Theorem \ref{max_2} along with \eqref{V-l-p} and \eqref{damp-p}--\eqref{Wy_claim}, we conclude 
	\begin{equation*}
		\|Z (\cdot,s)\|_{L^{\infty}(\mathbb{R})}\leq \frac{1}{1500}.
	\end{equation*}
	We remark that the condition \eqref{D-cond} in Theorem~\ref{max_2} holds for sufficiently small $\ve>0$. This gives the desired estimate \eqref{Burgers_B.1-p}.	
\end{proof}

We now show \eqref{3}.
\begin{lemma}\label{Wy2_lem}
	Assume that \eqref{Boot_2-p} holds. Then for sufficiently large $M>0$ and small $\ve>0$, it holds that 
	\begin{equation}\label{Uyy-bd}
		|U_{yy} (y, s) |\leq  \frac{M^{1/8} |y|}{2(1+y^2)^{1/2}} 
	\end{equation}
	for all $y\in\mathbb{R}$ and $s\in[s_0,\sigma_1]$.
\end{lemma}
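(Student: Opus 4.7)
The plan is to follow the weighted transport--damping and maximum-principle template of Lemma \ref{Wy1_lem_p}. I introduce
\begin{equation*}
Y(y,s) := \frac{\sqrt{1+y^2}}{y}\,U_{yy}(y,s),
\end{equation*}
so that the bootstrap \eqref{EP2_1D3-p} reads $|Y|\le M^{1/8}$ and the desired conclusion \eqref{Uyy-bd} reads $|Y|\le M^{1/8}/2$. Multiplying \eqref{CH_diff_2} by $\sqrt{1+y^2}/y$ and using the identity $\partial_y^3 U = \tfrac{y}{\sqrt{1+y^2}}\,Y_y + (1+y^2)^{-3/2}\,Y$ gives
\begin{equation*}
\partial_s Y + D^Y\,Y + \mathcal{V}\,Y_y = F^Y, \qquad D^Y := \frac{7}{2}+\frac{2U_y}{1-\dot\tau}+\frac{\mathcal{V}}{y(1+y^2)},
\end{equation*}
with $F^Y$ equal to $\sqrt{1+y^2}/y$ times the right-hand side of \eqref{CH_diff_2}. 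I then split $\mathbb{R}$ at $|y|=l$, where $l=l(M)$ is a small negative power of $M$ chosen below.

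For $|y|\le l$, the constraint $U_{yy}(0,s)=0$ in \eqref{constraint-p}, together with Lemma \ref{U30lem} giving $|\partial_y^3 U(0,s)-256|\le C\ve^{\deltaz}$ and the bootstrap $\|\partial_y^4 U\|_{L^\infty}\le M$, yields $|U_{yy}(y,s)|\le 257|y|+\tfrac{M}{2}y^2$ by Taylor's theorem, hence
\begin{equation*}
|Y(y,s)|\le \sqrt{1+l^2}\,\bigl(257 + \tfrac{Ml}{2}\bigr).
\end{equation*}
Choosing $l$ as a small enough negative power of $M$ (for instance $l=M^{-7/8}/4$) makes the right-hand side at most $M^{1/8}/4$ provided $M$ is taken sufficiently large, depending only on universal constants.

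For $|y|\ge l$, I decompose $D^Y$ as the principal part $\tfrac{7}{2}+2\overline U'+\tfrac{1}{1+y^2}\bigl(\tfrac{5}{2}+\overline U/y\bigr)$ plus perturbations involving $U_y-\overline U'$, $(U-\overline U)/y$, and the modulation quantities $\dot\tau$ and $e^{3s/2}(\kappa-\dot\xi)$. The principal part is bounded below by $19y^2/(10(1+y^2))$ by \eqref{num_3}; the $U_y-\overline U'$ perturbation is controlled by Lemma \ref{Wy1_lem_p}; the $(U-\overline U)/y$ perturbation follows by integrating that same bound using $\wt U(0,s)=0$, which yields $|(U-\overline U)/(y(1+y^2))|\lesssim y^2/(1+y^2)$; and the modulation contributions are absorbed via \eqref{tau}, \eqref{temp_1-p}, and \eqref{xibound}. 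The outcome is $D^Y\ge c\,y^2/(1+y^2)-O(\ve)$ with $c\in(0,1)$, so $D^Y$ has a positive lower bound of order $l^2$ on $\{|y|\ge l\}$. The forcing is estimated via \eqref{hat_up}, \eqref{P-Py}, and \eqref{beta-exp}, giving $\|F^Y\|_{L^\infty(|y|\ge l)}\lesssim l^{-1}\ve^{\deltaz/2}e^{-2s}$, which is negligible once $\ve$ is sufficiently small relative to $M$. Applying Theorem \ref{max_2} on $\{|y|\ge l\}$ with the inner boundary bound $|Y(\pm l,s)|\le M^{1/8}/4$ from the previous step and the initial bound $|Y(y,s_0)|\le M^{1/8}/4$ (from \eqref{1D3'} and \eqref{4.3a-p-w}) then yields $|Y(y,s)|\le M^{1/8}/2$ there, which combined with the near-zero estimate closes \eqref{Uyy-bd}.

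The main obstacle is the singularity of the weight $1/|y|$ at the origin: because $U_{yy}/y\to \partial_y^3U(0,s)\approx 256$, the quantity $|Y|$ is already of order $256$ near $y=0$, so $M$ must be taken very large to make $M^{1/8}/2>256$. The correspondingly small choice $l=l(M)$ then degrades the damping $D^Y$ at the inner boundary, forcing a careful balancing of $M$ and $\ve$. Once these parameters are ordered — fix $M$ large depending only on universal constants, then $l=l(M)$, then $\ve_0=\ve_0(M)$ sufficiently small, consistent with the dependence $\ve_0=\ve_0(\gamma,\|u_0\|_{H^1},\|u_0\|_{C^4})$ — the maximum-principle argument closes exactly as in Lemma \ref{Wy1_lem_p}.
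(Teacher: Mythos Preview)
Your argument mirrors the paper's proof almost exactly: the same weighted unknown $Y=\wt Z$, the same Taylor expansion near the origin (the paper takes $l=1/M$ rather than $l=M^{-7/8}/4$, but either works), the same damping lower bound via \eqref{num_3}, and the same appeal to Theorem~\ref{max_2}.

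There is one omission. Theorem~\ref{max_2} requires hypothesis \eqref{max_2_6}, namely $\limsup_{|y|\to\infty}|Y(y,s)|<M^{1/8}/2$ for each $s\in[s_0,\sigma_1]$, and this does not follow from your interior estimate on $|y|\le l$ or from the initial bound \eqref{1D3'}. The paper verifies it separately: on $\{|y|\ge 10\}$ the damping $D^{\wt Z}$ has an $M$-independent positive lower bound and the forcing is $O(e^{-s})$, so Lemma~\ref{rmk2} together with \eqref{UW_far} propagates the initial decay and gives $\limsup_{|y|\to\infty}|\wt Z(y,s)|\le M^{1/8}/4$. This step is routine but cannot be skipped, since without it the maximum of $|Y|$ could in principle escape to infinity and the maximum-principle argument would not close. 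A minor quibble: the dominant contribution to $F^Y$ scales like $\beta^{-1/2}e^{-7s/2}\lesssim \ve^{\deltaz/2}e^{-s}$ rather than $e^{-2s}$ as you wrote, though this is harmless for the conclusion.
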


\begin{proof}
	Setting $\wt{Z}(y,s) := (y^2+1)^{1/2} y^{-1}   U_{yy}(y,s)$, 
	we obtain from \eqref{CH_diff_2} that 
	\begin{equation*}
			\partial_s\wt{Z} + D^{\wt{Z}} \wt{Z} + \mathcal{V} \wt{Z}_y=F^{\wt{Z}},
	\end{equation*}
	where $\mathcal{V}$ is given in \eqref{V}, and
	\begin{subequations}
		\begin{align*}
			D^{\wt{Z}} (y, s) &:= \frac{7}{2}+\frac{2U_y}{1-\dot{\tau}}+\frac{1}{y(1+y^2)}\left(\frac{U}{1-\dot{\tau}}+\frac{5y}{2}+\frac{e^{3s/2}(\kappa-\dot{\xi})}{1-\dot{\tau}}\right),
			\\
			F^{\wt{Z}} (y, s) &:= -\frac{e^{-2s}}{1-\dot{\tau}}\frac{(y^2+1)^{1/2}}{y}\left(\frac{P_y- 2e^{-3s/2}(e^{-3s/2}U+\kappa)U_y}{\beta}-\frac{2\gamma}{\sqrt{\beta}} e^{-3s/2}U_y \right).
		\end{align*}
	\end{subequations}

	First, in the region $\{ y : |y|\leq {l=1/M} \}$, where $l>0$ is sufficiently small, i.e., $M\gg1$,  by the Taylor expansion together with \eqref{EP2_1D2-p} and \eqref{EP2_1D4-p}, we have for some $y'\in (-| y |, | y |)$,  
\begin{equation*}
	|U_{yy}(y,s)|\leq |y||\partial_y^3U(0,s)|+\frac{y^2}{2}|\partial_y^4U(y',s)|\leq 257|y|+\frac{M}{2}y^2\leq \frac{M^{1/8} |y|}{4(1+y^2)^{1/2}}  \quad \text{for}\quad |y|\leq l =\frac1M.
\end{equation*}
This gives 
\begin{equation}\label{Uyy_local}
	\|\wt{Z}(\cdot,s)\|_{L^{\infty}(|y|\leq l)}\leq  \frac{M^{1/8}}{4}.
\end{equation}

	Next, we obtain a lower bound for $D^{\wt{Z}}$. We note by \eqref{num_3}  that 
	\begin{equation*}
		\frac{7}{2}+2\overline{U}'+\frac{1}{y(1+y^2)}\left(\overline{U}+\frac{5y}{2}\right) \geq \frac{19y^2}{10(1+y^2)}.
	\end{equation*}
	 In light of this, we split  $D^{\wt{Z}}$ into two parts as 
	\begin{equation*}
		\begin{split}
			D^{\wt{Z}}=\left(\frac{7}{2}+2\overline{U}'+\frac{1}{y(1+y^2)}\left(\overline{U}+\frac{5y}{2}\right)\right)+\left(\frac{2(\wt{U}_y+\dot{\tau}\overline{U}')}{1-\dot{\tau}}+\frac{1}{y(1+y^2)}\left(\frac{\wt{U}+\dot{\tau}\overline{U}}{1-\dot{\tau}}+\frac{e^{3s/2}(\kappa-\dot{\xi})}{1-\dot{\tau}}\right)\right). 
		\end{split}
	\end{equation*}
	Thanks to \eqref{Wbar_neg}, \eqref{tau} and \eqref{EP2_1D1-p},  
	 we have 
	\begin{equation*}
		|\wt{U}_y+\dot{\tau}\overline{U}'|\leq \frac{ y^2}{1000(1+y^2)}+C\veps^2.
	\end{equation*}
	Also, similarly as \eqref{U-x2}, using \eqref{EP2_1D1-p}, 
	\begin{equation*}
		\left|\frac{\wt{U}(y, s) }{y(1+y^2)}\right|\leq 	\frac{1}{|y|(1+y^2)}\left|\int^y_0\frac{ y'^2}{1000(1+y'^2)}\,dy'\right|\leq \frac{y^2}{3000(1+y^2)}.
	\end{equation*} 
	In addition, 
	 from  \eqref{U-rough}, \eqref{tau} and  \eqref{temp_1-p}, 
	 we get
	\begin{equation*}
		\left|\frac{1}{y(1+y^2)}\left(\frac{\dot{\tau}\overline{U}}{1-\dot{\tau}}+\frac{e^{3s/2}(\kappa-\dot{\xi})}{1-\dot{\tau}}\right)\right|
		\leq Ce^{-s}(1+|y|^{-1}).
	\end{equation*} 
	Combining all, we have 
	\begin{equation}\label{Uyy_D}
		\begin{split}
			D^{\wt{Z}}(y, s)	&\geq \frac{19y^2}{10(1+y^2)}- \frac{ 2 y^2}{1000(1+y^2)}-\frac{y^2}{3000(1+y^2)}- \frac{C  \ve }{l}
			\geq \frac{y^2}{1+y^2}\quad \text{for}\quad |y|\geq l,
		\end{split}
	\end{equation}
	where the last inequality holds true if $\ve\ll l^3$. 

	Now, we estimate $F^{\wt{Z}}$. By \eqref{hat_up}, \eqref{tau}, \eqref{P-Py}  and  \eqref{Uy1-p},
	\begin{equation}\label{Uyy_F}
		\begin{split}
			|F^{\wt{Z}} (y, s) | &\leq Ce^{-2s}(\beta^{-1}|P_y|+ 2\beta^{-1}e^{-3s/2}|e^{-3s/2}U+\kappa||U_y|+2\gamma\beta^{-1/2} e^{-3s/2}|U_y| )
			\\
			&\leq C\beta^{-1/2}e^{-7s/2}
			 \leq Ce^{-s} \quad \text{for}\quad |y|\geq l. 
		\end{split}
	\end{equation}
	Here, the last inequality holds due to \eqref{beta-exp}, namely, $\beta^{-1/2}e^{-7s/2} \lesssim  e^{-s}$. 
	In addition, as we checked in \eqref{1D3'}, we have 
%
%
%
%
%
%
%
%
	\begin{equation*}
		\|\wt{Z}(\cdot,s_0)\|_{L^{\infty}(\mathbb{R})}\leq \frac{M^{1/8}}{4}.
	\end{equation*}
%
	Finally, we claim that $\textstyle\limsup_{|y|\rightarrow \infty}|\wt{Z} (y,s ) |\leq \frac14 M^{1/8}$ for sufficiently large $M>0$. 
	Thanks to \eqref{Uyy_D} and \eqref{Uyy_F}, we have  
	\begin{equation*}
		\inf_{|y|\geq 10}D^{\wt{Z}}(y,s)\geq \frac{ 10^2}{1+ 10^2}=:\lambda_D, 
	\end{equation*}
	and
	\begin{equation*}
		\|F^{\wt{Z}}(y,s)\|_{L^{\infty}(|y|\geq 10)}\leq Ce^{-s}. 
	\end{equation*}
	Applying Lemma~\ref{rmk2} with \eqref{UW_far}, we obtain the claimed inequality: 
	\begin{equation}\label{Uyy_far}
		\limsup_{|y|\rightarrow \infty}|\wt{Z}(y,s)|\leq \limsup_{|y|\rightarrow \infty}|\wt{Z}(y,s_0)|e^{-\lambda_D(s-s_0)}+  Ce^{-\lambda_Ds}  \le  \frac{M^{1/8}}{4},  
	\end{equation}
	where we have used the fact that   $\textstyle \|\wt{Z}(y,s_0)\|_{L^{\infty}(|y|\geq 10)} \le ( 10^2+1)^{1/2} \cdot 10^{-1}  \cdot C_2 < \frac14 M^{1/8}$, which comes from 
	\eqref{1D3}, i.e., $\|U_{yy}(\cdot,s_0)\|_{L^{\infty}}\leq C_2$. 

	Then we apply Theorem \ref{max_2} with \eqref{Uyy_local}-\eqref{Uyy_far} to conclude that $\textstyle \|\wt{Z} (\cdot, s) \|_{L^{\infty}}\leq \frac12 M^{1/8}$ for all $s\in[s_0,\sigma_1]$, which yields the desired estimate \eqref{Uyy-bd}. 
	This completes the proof. 
\end{proof}

We show \eqref{6}, thereby closing the bootstrap assumption \eqref{EP2_1D4-p}.
\begin{lemma}\label{Wy4_lem-p}
	Assume that \eqref{Boot_2-p} holds. Then for sufficiently large $M>0$ and small $\ve>0$, it holds that 
	\begin{equation*}
		\|\partial_y^4U(\cdot,s)\|_{L^{\infty}}\leq \frac{M}{2} 
	\end{equation*}
for all $s\in [s_0,\sigma_1]$.
\end{lemma}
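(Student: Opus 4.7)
The plan is to apply the maximum principle (Theorem~\ref{max_2}) directly to the evolution equation \eqref{CH_diff_4} for $\partial_y^4 U$, which has the form
\begin{equation*}
(\partial_s + D^{(4)})\,\partial_y^4 U + \mathcal{V}\,\partial_y^5 U = F^{(4)}, \qquad D^{(4)}(y,s) := \tfrac{17}{2} + \tfrac{4 U_y}{1-\dot{\tau}}.
\end{equation*}
The bootstrap consequence $|U_y|\le 2$ from \eqref{Uy1-p}, combined with \eqref{tau-dot}, gives $D^{(4)} \geq \tfrac{17}{2} - 8(1+\veps) \geq \tfrac{1}{4}$ for $\veps$ sufficiently small. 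This uniform positive damping is the core mechanism driving the argument, and unlike in Lemma~\ref{Wy1_lem_p} or Lemma~\ref{Wy2_lem} no weighting in $y$ is required since the bare coefficient already dominates.

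Next I estimate $\|F^{(4)}(\cdot,s)\|_{L^\infty}$. The only summand in \eqref{CH_diff_4} without an explicit $e^{-2s}$ prefactor is $-7 U_{yy}\partial_y^3 U/(1-\dot{\tau})$. Applying the bootstrap bounds \eqref{EP2_1D3-p} and \eqref{EP2_1D5-p} yields $|U_{yy}\partial_y^3 U| \leq M^{1/8}\cdot M^{3/4} = M^{7/8}$, so this contribution is bounded by $7(1+\veps)M^{7/8}$. Every other term in $F^{(4)}$ carries a prefactor $e^{-2s}/\beta$ or $e^{-2s}/\sqrt{\beta}$ multiplied by further negative exponentials of $s$; combined with \eqref{hat_up}, the bound $|\kappa|\le C\sqrt{\beta}$ from \eqref{xibound}, the bootstrap bounds on $U_y, U_{yy}, \partial_y^3 U$, and the absorption identity \eqref{beta-exp}, each of these is dominated by a uniform constant. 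The only subtle piece is $e^{-2s}\partial_y^3 P/\beta$, because Lemma~\ref{P-Py-lem} supplies pointwise control only on $P$ and $P_y$. I treat this by differentiating the elliptic relation \eqref{CH-w} once in $x'$ to express $\phi_{x'x'x'}$ algebraically in terms of $\phi_{x'}$, $w\,w_{x'}$, $w_{x'}\,w_{x'x'}$, and $w_{x'}$; converting via $\partial_y^k P = e^{-5ks/2}\partial_{x'}^k \phi$, $w_{x'} = e^s U_y$, $w_{x'x'} = e^{7s/2} U_{yy}$, this writes $\partial_y^3 P$ as a sum of terms of the form $\beta^{-a}e^{-bs}$ times quantities already controlled (at worst by $M^{1/8}$). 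Using \eqref{beta-exp}, the full contribution of $e^{-2s}\partial_y^3 P/\beta$ to $F^{(4)}$ is at most $C(1 + M^{1/8}\veps^{\deltaz})$, which reduces to $C$ once $\veps$ is chosen small relative to $M$. Collecting,
\begin{equation*}
\|F^{(4)}(\cdot,s)\|_{L^\infty} \leq 7(1+\veps)\,M^{7/8} + C,
\end{equation*}
with $C$ independent of $M$.

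Feeding these damping and forcing estimates into Theorem~\ref{max_2}, together with the initial bound $\|\partial_y^4 U(\cdot,s_0)\|_{L^\infty} \leq C_4$ from \eqref{1D4}, yields
\begin{equation*}
\|\partial_y^4 U(\cdot,s)\|_{L^\infty} \leq C_4\,e^{-(s-s_0)/4} + 4\bigl(7(1+\veps)\,M^{7/8} + C\bigr) \leq 28\,M^{7/8} + C'
\end{equation*}
uniformly in $s \in [s_0, \sigma_1]$, with $C'$ independent of $M$. Since $M^{7/8}/M = M^{-1/8}\to 0$ as $M\to\infty$, choosing $M$ sufficiently large forces $28\,M^{7/8} + C' \leq M/2$, which closes \eqref{EP2_1D4-p}.

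The principal obstacle is that the dominant forcing term $-7U_{yy}\partial_y^3 U/(1-\dot\tau)$ carries no smallness prefactor whatsoever, so its size is dictated purely by the bootstrap. The structural feature that rescues the argument is that $|U_{yy}|$ and $|\partial_y^3 U|$ are controlled by \emph{different} sublinear powers of $M$, namely $M^{1/8}$ and $M^{3/4}$, whose product $M^{7/8}$ remains strictly sublinear in $M$. Without this power separation the forcing would scale like $M^2$ and the bootstrap could not close. The secondary difficulty, estimating $\partial_y^3 P$, is resolved through the algebraic nature of the elliptic equation combined with \eqref{beta-exp}, as described above.
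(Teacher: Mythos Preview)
Your argument is correct and essentially identical to the paper's: uniform damping $D^{(4)}\ge 1/4$ from \eqref{Uy1-p}, forcing $\|F^{(4)}\|_{L^\infty}\le C M^{7/8}$ driven by $|U_{yy}\,\partial_y^3 U|\le M^{1/8}M^{3/4}$, handling of $\partial_y^3 P$ by differentiating \eqref{CH_P}, then a Gronwall estimate along characteristics. One cosmetic point: you cite Theorem~\ref{max_2}, but the bound you actually write down is the direct characteristic-integration estimate (which is precisely what the paper does); Theorem~\ref{max_2} as stated would additionally require verifying the far-field hypothesis \eqref{max_2_6}, so it is cleaner to drop that reference and simply integrate along $\partial_s\psi=\mathcal{V}$.
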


\begin{proof}
	From \eqref{CH_diff_4}, we have
	\begin{equation}\label{Uyyyy-eq}
		\partial_s \partial_y^4U + D^U_4 \partial_y^4U + \mathcal{V} \partial_y^5 U  = F^U_4, 
	\end{equation}
	where $\mathcal{V}$ is defined in \eqref{V}, and
	\begin{subequations}
		\begin{align*}
			D^U_4 &:= \frac{17}{2} + \frac{4U_y}{1-\dot{\tau}},
			\\
			F^U_4 &:= -\frac{ e^{-2s} }{1-\dot{\tau}}\left(\frac{\partial_y^3P-6e^{-3s}U_yU_{yy}-2e^{-3s/2}(e^{-3s/2}U+\kappa)\partial_y^3U}{\beta}-\frac{2\gamma}{\sqrt{\beta}} e^{-3s/2}\partial_y^3U\right)-\frac{7U_{yy}\partial_y^3U}{1-\dot{\tau}}.
		\end{align*}
	\end{subequations}
	Using \eqref{tau} and \eqref{Uy1-p}, it is straightforward to check that 
	\begin{equation}\label{DU4}
		D^U_4 = \frac{17}{2}+\frac{4U_y}{1-\dot{\tau}}\geq \frac{17}{2}-8(1+\veps)\geq \frac{1}{4}. 
	\end{equation}
%
	On the other hand, applying $\partial_y$ to \eqref{CH_P}, and using   \eqref{hat_up}, \eqref{P-Py}, \eqref{EP2_1D3-p}  and \eqref{Uy1-p},  we have 
	\begin{equation*}
		\begin{split}
			|\partial_y^3P|&=\left|\frac{e^{-5s}P_y-2(e^{-3s/2}U+\kappa)e^{-13s/2}U_y}{\beta}-\frac{2\gamma}{\sqrt{\beta}} e^{-13s/2}U_y-e^{-3s}U_yU_{yy}\right| 
			\\
			&\leq C\beta^{-1/2}e^{-13s/2}+Ce^{-3s}\leq Ce^{-3s}, 
		\end{split}
	\end{equation*} 
	where in the last inequality, we used \eqref{beta-exp}, i.e., 
	$\beta^{-1/2}e^{-13s/2} \lesssim 
	 e^{-4s}$.  
Using this, together with \eqref{hat_up},  \eqref{tau}, \eqref{EP2_1D3-p}, \eqref{EP2_1D5-p},  \eqref{Uy1-p} and \eqref{beta-exp},  we have 
	\begin{equation}\label{FU4}
		\begin{split}
			\|F^U_4(\cdot,s)\|_{L^{\infty}}&\leq C\beta^{-1}e^{-5s}+C\beta^{-1/2}e^{-7s/2}+ 7 (1+\veps)  M^{7/8}
			\\
			&\leq C\veps^{\deltaz}+ 7 (1+\veps) M^{7/8} \leq 8M^{7/8}. 
		\end{split}
	\end{equation}
	Here, similarly to before, we invoked $\beta^{-1}e^{-5s}\leq C   \veps^{\deltaz}$, thanks to \eqref{beta-exp}.

	Let  $\psi$ be the characteristic curve  satisfying $\partial_s \psi=\mathcal{V} (\psi, s)$ and $\psi(y,s_0)=y$. 
By integrating \eqref{Uyyyy-eq} along $\psi$, and using \eqref{DU4}, \eqref{FU4} and the fact that $\|\partial_y^4U(\cdot,s_0)\|_{L^{\infty}}\lesssim1$ from \eqref{1D4}, 
we have 
\begin{equation*}
\begin{split} 
	\|\partial_y^4U(\cdot,s)\|_{L^{\infty}} & \le 	\|\partial_y^4U(\cdot,s_0)\|_{L^{\infty}}e^{-(s-s_0)/4}+ 32M^{7/8} \leq \frac{M}{2},
\end{split}
\end{equation*}
where in the last inequality, we choose $M>0$ to be sufficiently large. This completes the proof.
\end{proof}

Next we show \eqref{5}.
\begin{lemma}\label{w-3-est}
	Assume that \eqref{Boot_2-p} holds. Then for sufficiently large $M>0$ and small $\ve>0$, it holds that 
	\begin{equation*}
		\|\partial_y^3U(\cdot,s)\|_{L^{\infty}}\leq \frac{M^{3/4}}{2}
	\end{equation*}
	for all $s\in[s_0,\sigma_1]$.
\end{lemma}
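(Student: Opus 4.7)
The natural approach is via a Landau--Kolmogorov type interpolation inequality, rather than a direct PDE/maximum-principle argument as in the preceding lemmas. Indeed, if one attempted the characteristic-integration strategy of Lemma~\ref{Wy4_lem-p} applied to equation \eqref{CH_diff_3}, the damping coefficient would be $D^U_3 = 6 + 3U_y/(1-\dot\tau)$, which using only $|U_y|\leq 2$ from \eqref{Uy1-p} can be as small as $-6\veps$, hence fails to be uniformly positive; the failure occurs precisely near $y=0$, where $U_y\approx -2$ in view of \eqref{constraint-p}. This explains why Lemma~\ref{w-3-est} is placed after the bounds on $U_{yy}$ (Lemma~\ref{Wy2_lem}) and on $\partial_y^4 U$ (Lemma~\ref{Wy4_lem-p}): once these are in hand, an elementary interpolation step closes the remaining bootstrap immediately.

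Concretely, since $U(\cdot,s)\in H^5(\mathbb{R})$ by Lemma~\ref{criterion}, the function $f=\partial_y^2 U(\cdot,s)$ is $C^2(\mathbb{R})$ with $\|f\|_{L^\infty}$ and $\|f''\|_{L^\infty}$ finite, so the classical Landau--Kolmogorov inequality on $\mathbb{R}$ yields
\begin{equation*}
\|\partial_y^3 U(\cdot, s)\|_{L^\infty}^2 \leq 2\,\|U_{yy}(\cdot, s)\|_{L^\infty}\,\|\partial_y^4 U(\cdot, s)\|_{L^\infty}.
\end{equation*}
(This follows from the two-point Taylor identity $f(y_0\pm h)= f(y_0)\pm hf'(y_0)+\tfrac{h^2}{2}f''(\xi_\pm)$, giving $|f'(y_0)|\leq \|f\|_{L^\infty}/h + (h/2)\|f''\|_{L^\infty}$, optimized in $h>0$.) Invoking the bootstrap assumptions \eqref{EP2_1D3-p} and \eqref{EP2_1D4-p}, which give $\|U_{yy}(\cdot,s)\|_{L^\infty}\leq M^{1/8}$ and $\|\partial_y^4 U(\cdot,s)\|_{L^\infty}\leq M$, we obtain
\begin{equation*}
\|\partial_y^3 U(\cdot, s)\|_{L^\infty} \leq \sqrt{2\,M^{1/8}\cdot M} \;=\; \sqrt{2}\,M^{9/16}.
\end{equation*}

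Since $9/16<3/4$, the inequality $\sqrt{2}\,M^{9/16}\leq M^{3/4}/2$ is equivalent to $M^{3/16}\geq 2\sqrt{2}$, i.e.\ $M\geq 256$. Taking $M$ sufficiently large--a constraint already imposed throughout Section~\ref{sec3-boot}--the desired improvement $\|\partial_y^3 U(\cdot,s)\|_{L^\infty}\leq M^{3/4}/2$ follows for every $s\in[s_0,\sigma_1]$. There is essentially no substantive obstacle: the only subtlety is to verify that the required $M$ is compatible with the largeness conditions imposed in Lemmas~\ref{Wy2_lem} and~\ref{Wy4_lem-p}, but since all those conditions merely require $M$ to be ``sufficiently large,'' absorbing the additional lower bound $M\geq 256$ is harmless.
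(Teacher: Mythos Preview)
Your interpolation argument is correct and takes a genuinely different route from the paper. The paper does proceed via the PDE equation \eqref{CH_diff_3}, contrary to your remark that such an approach ``fails'': the degeneracy of $D^U_3$ near $y=0$ is handled by splitting the domain. For $|y|\leq (8M^{1/4})^{-1}$ the paper uses Taylor expansion from \eqref{EP2_1D2-p} and \eqref{EP2_1D4-p} to obtain $|\partial_y^3 U|\leq 257+|y|M\leq M^{3/4}/4$; for $|y|\geq (8M^{1/4})^{-1}$ it uses the refined pointwise bound on $U_y$ from \eqref{num_4} and \eqref{EP2_1D1-p} (not merely $|U_y|\leq 2$) to extract $D^U_3\geq (25M^{1/2})^{-1}>0$, bounds the forcing by $|F^U_3|\leq 3M^{1/4}$, and then applies the maximum principle Theorem~\ref{max_2}. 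Your Landau--Kolmogorov step is decidedly shorter and uses only the raw bootstrap bounds \eqref{EP2_1D3-p}, \eqref{EP2_1D4-p}; in particular, despite your framing, it does not actually rely on the improved conclusions of Lemmas~\ref{Wy2_lem} or~\ref{Wy4_lem-p}, so the placement of the lemma is immaterial for your argument. The paper's approach, while longer, stays within the uniform transport/maximum-principle machinery used for every other closure, and in doing so exercises the weighted damping estimate that is the conceptual core of the whole bootstrap.
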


\begin{proof}
	From \eqref{CH_diff_3}, we have
	\begin{equation*}\label{Uyyyeq}
		\partial_s \partial_y^3U + D^U_3 \partial_y^3U + 	\mathcal{V} \partial_y^4 U  =  F^U_3,
	\end{equation*}
	where $\mathcal{V}$ is defined in \eqref{V}, and 
	\begin{subequations}
		\begin{align*}
			D^U_3 &:= 6 + \frac{3U_y}{1-\dot{\tau}} ,
			\\
			F^U_3 &:=  -\frac{ e^{-2s} }{1-\dot{\tau}}\left(\frac{P_{yy}-2e^{-3s}U_y^2-2e^{-3s/2}(e^{-3s/2}U+\kappa)U_{yy}}{\beta}-\frac{2\gamma}{\sqrt{\beta}} e^{-3s/2}U_{yy}\right) - \frac{2 U_{yy}^2}{1-\dot{\tau}}.
		\end{align*}
	\end{subequations}
	Due to \eqref{EP2_1D2-p} and \eqref{EP2_1D4-p}, it holds  for all  $|y|\leq (8M^{1/4})^{-1}$ that 
	\begin{equation} \label{Uy3loc}
		|\partial_y^3U(y,s)|\leq 		|\partial_y^3U(0,s)|+|y|\|\partial_y^4U(y,s)\|_{L^{\infty}} \leq 257+|y|M\leq \frac{M^{3/4}}{4} 
	\end{equation}
	for sufficiently large $M>0$.

	Now we shall check for the case $|y|\geq (8M^{1/4})^{-1}$. 
	Thanks to \eqref{num_4} and \eqref{EP2_1D1-p}, we have 
	\begin{equation*}
		6+3\overline{U}'+3\wt{U}_y \geq 6+3\overline{U}'-\frac{3 y^2}{1000(1+y^2)} \geq
		\frac{3y^2}{1+y^2}.
	\end{equation*}
	Using this together with \eqref{tau} and \eqref{Uy1-p}, we have for $|y|> ( 8M^{1/4})^{-1}$, 
	\begin{equation}\label{Uy3D'}
		\begin{split}
			D^U_3&= (6+3\overline{U}'+3\wt{U}_y)+\frac{3\dot{\tau}U_y}{1-\dot{\tau}}
			\geq
			\frac{3y^2}{1+y^2}-C\veps^2 \ge \frac{3}{64M^{1/2}+1}  -C\veps^2 \geq \frac{1}{25M^{1/2}}. 
		\end{split}
	\end{equation} 
	On the other hand, using \eqref{hat_up}, \eqref{P-Py} and \eqref{Uy1-p} for \eqref{CH_P}, we have
	\begin{equation*}
		| P_{yy} | = \left| \frac{e^{-5s}P-e^{-5s}(e^{-3s/2}U+\kappa)^2}{\beta}-\frac{2\gamma}{\sqrt{\beta}}e^{-5s}(e^{-3s/2}U+\kappa)-\frac{e^{-3s}U_y^2}{2}\right|\leq Ce^{-3s}.
	\end{equation*} 
	Using this along with  \eqref{hat_up}, \eqref{EP2_1D3-p}, \eqref{Uy1-p} and  \eqref{beta-exp},
	\begin{equation}\label{Uy3F}
		\begin{split}
			 | F^U_3(\cdot,s) | & \leq \frac{e^{-2s}}{1-\dot{\tau}}\left(\frac{|P_{yy}|+ 2e^{-3s}|U_y|^2+2e^{-3s/2}|e^{-3s/2}U+\kappa||U_{yy}|}{\beta}+\frac{2\gamma}{\sqrt{\beta}} e^{-3s/2}|U_{yy}| \right)+\frac{2|U_{yy}|^2}{1-\dot{\tau}}
			\\
			&\leq C(\beta^{-1}e^{-5s}+\beta^{-1/2}e^{-7s/2})+2(1+\veps) (M^{1/8})^2 < 3M^{1/4}. 
		\end{split}
	\end{equation}
	
	Next, we claim that $\textstyle\limsup_{|y|\rightarrow \infty}|\partial_y^3U(y,s)|\leq \frac14 M^{3/4} $. 
	Due to \eqref{Uy3D'} and \eqref{Uy3F}, 
	it holds that for $|y|\geq 10$, 
	\begin{subequations}
		\begin{align*}
			&\inf_{|y|\geq 10}D^U_3(y,s)\geq \frac{3\cdot 10^2}{1+10^2}-C\veps^2 =:\lambda_{D_3^U} >0, 
			\\
			&\|F^U_3(y,s)\|_{L^{\infty}(|y|\geq 10)}\leq 3M^{1/4}.
		\end{align*}
	\end{subequations}
	 Having these and \eqref{UW_far}, we apply Lemma~\ref{rmk2} to get
	\begin{equation}\label{Uy3dec} 
		\limsup_{| y |\rightarrow \infty}|\partial_y^3U( y,s)|  \leq
		 \limsup_{|y|\rightarrow \infty}|\partial_y^3U(y,s_0)|e^{-  \lambda_{ D_3^U} (s-s_0)}+ 3M^{1/4} ( \lambda_{ D_3^U})^{-1} \leq \frac{M^{3/4}}{4}
	\end{equation}
	for some constant $M>0$ sufficiently large. 
	Here we have used the initial condition \eqref{1D5} yielding $\|\partial_y^3U(\cdot,s_0)\|_{L^{\infty}}\leq C_3$. 
	Then, applying Theorem \ref{max_2} with  \eqref{init_24-p-w} and \eqref{Uy3loc}-\eqref{Uy3dec},  we arrive at
	\begin{equation*}
		\|\partial_y^3U(\cdot,s)\|_{L^{\infty}}\leq \frac{M^{3/4}}{2}.
	\end{equation*}
	This completes the proof. 
\end{proof}

\begin{lemma}
	Assume that \eqref{Boot_2-p} holds. Then, for sufficiently small $\ve>0$, it holds that 
	\begin{equation}\label{Phiyy_decay-p}
		\left\|(y^{2/5}+1) \left(\frac{P (y,s)-(e^{-3s/2}U(y,s) +\kappa)^2 }{\beta} -\frac{2\gamma}{\sqrt{\beta}}(e^{-3s/2}U(y,s) +\kappa)\right)\right\|_{L^{\infty}_y }\leq Ce^{(2-\deltaz /5)s}, 
	\end{equation} 	
where $\deltaz>0$ is as in \eqref{init_w_3-p}. 
\end{lemma}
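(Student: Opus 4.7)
The plan is to derive the bound by exploiting a hidden cancellation between two $O(e^{2s})$-sized pieces in the quantity being estimated. Rearranging the elliptic equation \eqref{CH_P} yields the pointwise identity
\[
G(y,s):=\frac{P(y,s)-(e^{-3s/2}U(y,s)+\kappa)^2}{\beta}-\frac{2\gamma(e^{-3s/2}U(y,s)+\kappa)}{\sqrt\beta}=e^{5s}P_{yy}(y,s)+\frac{e^{2s}}{2}U_y^2(y,s).
\]
At $y=0$ the modulation equation \eqref{Eq_Modul1-p} reads $G(0,s)=-2e^{2s}\dot\tau$, which is bounded by \eqref{tau}, and the uniform pointwise bound $\|G(\cdot,s)\|_{L^\infty_y}\lesssim 1$ follows from \eqref{hat_up}. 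This already suffices for the weighted norm in the regime where $(1+|y|^{2/5})\lesssim e^{(2-\deltaz/5)s}$.

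For the complementary very large $|y|$ regime I would use the resolvent kernel $K_\beta(x')=(2\sqrt\beta)^{-1}e^{-|x'|/\sqrt\beta}$ of $\beta^{-1}-\partial_{x'}^2$, so that $\phi=\beta K_\beta*F$ with source $F=w^2/\beta+w_{x'}^2/2+2\gamma w/\sqrt\beta$. Using $\int K_\beta=1$ to telescope the $w(x')$-dependent constants gives the convolution identity
\[
G(x',t)=\int K_\beta(x'-z')\left[\frac{w(z')^2-w(x')^2}{\beta}+\frac{(w_{z'}(z'))^2}{2}+\frac{2\gamma(w(z')-w(x'))}{\sqrt\beta}\right]dz'.
\]
This is the crucial reformulation: the two individually $O(e^{2s})$ contributions in the first identity are replaced by a single integral whose integrand involves only differences of $w$ and a smoothed copy of $U_y^2$.

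Passing to modulation variables with bandwidth $A:=\sqrt\beta e^{5s/2}$, the measure becomes $(2A)^{-1}e^{-|y-y_z|/A}\,dy_z$. The dominant contribution comes from the smoothed $(w_{z'})^2/2=e^{2s}U_y^2(y_z)/2$ piece: using the decay $|U_y(y_z)|\le C(1+|y_z|^{2/5})^{-1}$ from \eqref{Wydec_fin_2} and a window-by-window estimate that separates $|y|\le A$ from $|y|>A$ gives
\[
(1+|y|^{2/5})\cdot\frac{e^{2s}}{2A}\int e^{-|y-y_z|/A}U_y^2(y_z)\,dy_z\;\lesssim\;\beta^{-1/5}e^s.
\]
The hypothesis $\beta\ge \veps^{5-\deltaz}/256$ implies $\beta^{-1/5}\le C\veps^{-1+\deltaz/5}\le Ce^{(1-\deltaz/5)s}$ for $s\ge s_0=-\log\veps$, so this contribution is bounded by $Ce^{(2-\deltaz/5)s}$, matching the claimed bound. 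The two remaining pieces involving the differences $w(z')-w(x')=e^{-3s/2}(U(y_z)-U(y))$ are controlled by combining the uniform bound $|w|\le\sqrt\beta C_u$ with the H\"older-type estimate \eqref{not-zero} for $|U(y_z)-U(y)|$; both are strictly lower order after accounting for the $\beta$-factors via \eqref{beta-exp}.

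The main obstacle is that each of the two terms $e^{5s}P_{yy}$ and $e^{2s}U_y^2/2$ is individually of size $e^{2s}$ and they cancel partially, a cancellation that is invisible to a term-by-term triangle inequality; the convolution identity in Step 2 is what makes it explicit. A secondary technical difficulty is that $\int U_y^2\,dy_z$ is only borderline divergent (as $|y_z|^{1/5}$ at infinity), so the exponential localization of $K_\beta$ at the $y$-scale $A=\sqrt\beta e^{5s/2}$ must be kept active in the averaging estimate rather than bounded away, and the weight $(1+|y|^{2/5})$ must be carefully matched against the window width.
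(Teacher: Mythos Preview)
Your proposal is correct and follows essentially the same route as the paper: both arguments rewrite $G$ via the resolvent-kernel convolution identity (subtracting the pointwise values using $\int K_\beta=1$) and then split into a near region $|y-z|\le |y|/2$ and its complement to handle the weight $(1+|y|^{2/5})$. The one substantive difference is the treatment of the smoothed $U_y^2$ term: the paper invokes the Hamiltonian invariant \eqref{uyint}, i.e.\ $e^{-s/2}\int U_y^2\,dy\le C\sqrt\beta$, together with Cauchy--Schwarz to obtain $Ce^s$, whereas you rely solely on the bootstrap pointwise decay $|U_y|\lesssim (1+|y|^{2/5})^{-1}$ and a window estimate at scale $A=\sqrt\beta e^{5s/2}$, giving $C\beta^{-1/5}e^s$; both are $\le Ce^{(2-\deltaz/5)s}$ via \eqref{beta}. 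A minor caveat: the ``remaining pieces'' involving $w(z')-w(x')$ are not strictly lower order but rather comparable --- they require the same near/far splitting (mean-value estimate on the near set, uniform bound $|w|\le\sqrt\beta C_u$ combined with $|y|^{2/5}e^{-|y|/(4A)}\lesssim A^{2/5}$ on the far set) to close, exactly as in the paper's treatment of $I_A$ and $I_{A^c}$.
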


\begin{proof}
	Note that $\textstyle K_a (x):=\frac{1}{2a}e^{-a|x|}$ is the resolvent kernel for the operator $(a^2-\partial_x^2)$ on $\mathbb{R}$ for $a>0$. Using this with $a=\beta^{-1/2}e^{-5s/2}$, we have from \eqref{CH_P} that
	\begin{equation*}
		\begin{split}
			\frac{P(y,s)}{\beta}=\frac{e^{-5s/2}}{2\sqrt{\beta}}\int_{\mathbb{R}}e^{-\beta^{-1/2}e^{-5s/2}|y-z|}\left( \frac{(e^{-3s/2}U+\kappa)^2}{\beta}+\frac{2\gamma}{\sqrt{\beta}} (e^{-3s/2}U+\kappa) +\frac{e^{2 s}U_y^2}{2}\right)(z,s)\,dz.
		\end{split}
	\end{equation*}
	Using this together with the fact that $\textstyle \frac{a}{2}\int_{\mathbb{R}} e^{-a|x|} dx =1$,  
	we have 
	\begin{equation*}
		\begin{split}
			y^{2/5}&\left|\frac{P-(e^{-3s/2}U+\kappa)^2}{\beta}-\frac{2\gamma}{\sqrt{\beta}}(e^{-3s/2}U+\kappa)\right|
			\\
			&\leq
			\frac{e^{-5s/2}}{2\sqrt{\beta}}y^{2/5}\int_{\mathbb{R}}e^{-\beta^{-1/2}e^{-5s/2}|y-z|}\left|\frac{(e^{-3s/2}U+\kappa)^2(z,s)}{\beta}-\frac{(e^{-3s/2}U+\kappa)^2(y,s)}{\beta}\right|\,dz
			\\
			&\quad+ 
			\frac{\gamma}{\beta} e^{-4s}y^{2/5}\int_{\mathbb{R}} e^{-\beta^{-1/2}e^{-5s/2}|y-z|}\left|U(z,s)-U(y,s)\right|\,dz 
			\\
			&\quad+\frac{e^{-s/2}}{4\sqrt{\beta}}y^{2/5}\int_{\mathbb{R}}e^{-\beta^{-1/2}e^{-5s/2}|y-z|}|U_y (z,s) |^2 \,dz
			\\
			&\leq
			C \beta^{-1/2}y^{2/5}\int_{A} e^{-\beta^{-1/2}e^{-5s/2}|y-z|}\left(\beta^{-1/2}e^{-4s}\left|U(z,s)-U(y,s)\right| +e^{-s/2}|U_y (z,s) |^2 \right)\,dz
			\\
			&\quad+C \beta^{-1/2}y^{2/5}\int_{A^c} e^{-\beta^{-1/2}e^{-5s/2}|y-z|}\left( e^{-5s/2}+e^{-s/2}|U_y (z,s) |^2 \right) \,dz
			\\
			&=:I_{A} + I_{A^c},
		\end{split}
	\end{equation*}
	where $A:=\{z\in\mathbb{R} \;|\; |y-z|<|y|/2\}$. Here, in the second inequality, we used the following inequalities, which hold true due to \eqref{u_bound}: 
	\begin{equation*}
		\left|(e^{-3s/2}U+\kappa)^2(z,s)-(e^{-3s/2}U+\kappa)^2(y,s)\right|\leq C\beta^{1/2}e^{-3s/2}\left|U(z,s)-U(y,s)\right|
	\end{equation*} for $z\in A$, and 
	\begin{equation*}
		\begin{split}
			&|U(z,s)-U(y,s)|=e^{3s/2}|(e^{-3s/2}U(z,s)+\kappa)-(e^{-3s/2}U(y,s)+\kappa)|\leq C\beta^{1/2}e^{3s/2},
			\\
			&\left|(e^{-3s/2}U+\kappa)^2(z,s)-(e^{-3s/2}U+\kappa)^2(y,s)\right|\leq C\beta
		\end{split}
	\end{equation*} for $z\in A^c$.
	First, we estimate $I_A$.
	Using \eqref{ax1_0}, one can check that
	\begin{equation*}
		\begin{split}
			I_{A}
			&\leq C\int_{A}e^{-\beta^{-1/2}e^{-5s/2}|y-z|}\left( \beta^{-1}e^{-4s}y^{2/5}|y-z||U_y(\wt{z},s)| +\beta^{-1/2}e^{-s/2}\frac{y^{2/5}}{1+z^{2/5}}|U_y (z,s)| \right) \,dz
			\\
			&\leq  C\int_{A}e^{-\beta^{-1/2}e^{-5s/2}|y-z|}\left( \beta^{-1}e^{-4s}\frac{y^{2/5}}{1+\wt{z}^{2/5}}|y-z| + \beta^{-1/2} e^{-s/2}\frac{y^{2/5}}{1+z^{2/5}}|U_y (z,s) | \right)\,dz
		\end{split}
	\end{equation*}
	for some $\wt{z}$ between $y$ and $z$. 
	Note that $z\in A$ implies $|\wt{z}-y|< |y|/2$, in turn, $|y|/2<|\wt{z}|<3|y|/2$. Then 
	\begin{equation}\label{I01}
		\begin{split}
			I_{A}&\leq  C\int_{A}e^{-\beta^{-1/2}e^{-5s/2}|y-z|}\left(\beta^{-1}e^{-4s}\frac{y^{2/5}}{1+(y/2)^{2/5}}|y-z| +\beta^{-1/2}e^{-s/2}\frac{y^{2/5}}{1+(y/2)^{2/5}}|U_y (z,s) | \right)\,dz 
			\\
			&\leq C\beta^{-1}e^{-4s}\int_{\mathbb{R}}e^{-\beta^{-1/2}e^{-5s/2}|y-z|}|y-z|\,dz +C\beta^{-1/2}e^{-s/2}\int_{\mathbb{R}}e^{-\beta^{-1/2}e^{-5s/2}|y-z|}|U_y (z,s) | \,dz 
			\\
			&\leq C\beta^{-1}e^{-4s}\int_{\mathbb{R}}e^{-\beta^{-1/2}e^{-5s/2}|y-z|}|y-z|\,dz 
			\\
			&\quad + C\beta^{-1/2}e^{-s/2}  \left( \int_{\mathbb{R}}e^{-2\beta^{-1/2}e^{-5s/2}|y-z|}\,dz \right)^{1/2} \left( \int_{\mathbb{R}}|U_y (z,s) |^2 \,dz \right)^{1/2}
			\\
			&\leq Ce^{s} \leq Ce^{(2-\deltaz/5)s}.
		\end{split}
	\end{equation} 
	Here, in the fourth inequality, we used 
	\begin{equation}\label{uyint}
		e^{-s/2}\int_{\mathbb{R}}|U_y (y,s) |^2 \,dy=\int_{\mathbb{R}}|\hu_{x'} (x', t) |^2\,dx=\sqrt{\beta}\int_{\mathbb{R}}|u_x(x,t)|^2\,dx\leq C\sqrt{\beta}, 
	\end{equation}
	which holds thanks to \eqref{H}, and also used the fact that 
	$\textstyle \int_{\mathbb{R}}e^{-a|y-z|}\,dz =2a^{-1}$, $\textstyle \int_{\mathbb{R}}e^{-a|y-z|}|y-z|\,dz=2a^{-2}$. 
	
	Now, we estimate $I_{A^c}$. 
	It is straightforward to check that 
	\begin{equation}\label{I02}
		\begin{split}
			I_{A^c}&\leq C\beta^{-1/2}\int_{A^c}e^{-\beta^{-1/2}e^{-5s/2}|y-z|}y^{2/5}\left(e^{-5s/2}+e^{-s/2}|U_y (z,s) |^2 \right)\,dz
			\\
			&\leq C\beta^{-1/2}e^{-5s/2}\int_{A^c}e^{-\beta^{-1/2}e^{-5s/2}|y-z|/2}\left(e^{-\beta^{-1/2}e^{-5s/2}|y|/4}y^{2/5}\right)\,dz
			\\
			&\quad + C\beta^{-3/10}e^{s/2} \int_{A^c}e^{-\beta^{-1/2}e^{-5s/2}|y-z|}(\beta^{-1/2}e^{-5s/2})^{2/5}|y-z|^{2/5}|U_y (z,s) |^2 \,dz 
			\\
			&\leq C\beta^{-3/10}e^{-3s/2}\int_{A^c}e^{-\beta^{-1/2}e^{-5s/2}|y-z|/2}\,dz + C\beta^{-3/10}e^{s/2}\int_{A^c}|U_y (z,s) |^2 \,dz 
			\\
			&\leq C\beta^{1/5}e^s\leq Ce^{(2-\deltaz/5)s}.
		\end{split}
	\end{equation}
	Here, in the third inequality, we used 
	$\textstyle e^{-a |y | /4}y^{2/5}\leq \beta^{1/5}e^{s}\left(e^{-a | y | /4}(a | y | )^{2/5}\right)  \leq C\beta^{1/5}e^s$, 
	where $a=\beta^{-1/2}e^{-5s/2}$, and the fact that $e^{-|x|}|x|^{2/5}\leq C$ for all $x\in\mathbb{R}$. Also, we used \eqref{beta} and \eqref{uyint}  in the last two inequalities. 
	 Then, \eqref{I01} and \eqref{I02} yield 
	 \begin{equation*}
		\left\|y^{2/5}\left(\frac{P (y,s)-(e^{-3s/2}U(y,s) +\kappa)^2 }{\beta} -\frac{2\gamma}{\sqrt{\beta}}(e^{-3s/2}U(y,s) +\kappa)\right)\right\|_{L^{\infty}_y }\leq Ce^{(2-\deltaz/5)s}, 
	\end{equation*}
	which, together with \eqref{hat_up}, yields the desired estimate \eqref{Phiyy_decay-p}.
	  This completes the proof.  
\end{proof}

Next, we present an auxiliary lemma which will be used in the proof of Lemma~\ref{mainprop_1-p}.
\begin{lemma}\label{dec-lem}
	Assume that \eqref{Boot_2-p} holds. Then for sufficiently small $\ve>0$, it holds that  
	\begin{equation}\label{Ut_dec_p}
		\limsup_{|y|\rightarrow \infty} |(y^{2/5}+1)(U_y (y, s) -\overline{U}' (y) ) | <  \frac{6}{13} -  2  \theta,
	\end{equation}
	where $\theta$ is defined in \eqref{theta}.
\end{lemma}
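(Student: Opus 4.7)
The plan is to derive a transport equation for the weighted residual $\hat Z(y,s) := (1+y^{2/5})(U_y(y,s) - \overline U'(y))$ and propagate its $\limsup$ at infinity along the characteristics of $\mathcal V$. Multiplying \eqref{Eq_diff-p} by $(1+y^{2/5})$ and using $\wt U_y + 2\overline U' = U_y + \overline U'$, we arrive at
\[
\partial_s \hat Z + \tilde D\,\hat Z + \mathcal V\,\partial_y\hat Z = \tilde F,\qquad \tilde D := 1 + \frac{U_y + \overline U'}{2(1-\dot\tau)} - \frac{2\mathcal V\,y^{-3/5}}{5(1+y^{2/5})},
\]
with source $\tilde F := (1+y^{2/5})\cdot[\text{RHS of \eqref{Eq_diff-p}}]$. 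By Lemma~\ref{xilem}, $\mathcal V(y,s)\cdot\operatorname{sgn}(y)\geq |y|/8$ for $|y|\geq 1$, so every characteristic $y_c(\cdot)$ starting at large $y_0 := y_c(s_0)$ grows monotonically with $y_c(s)\to\infty$; for each fixed $s$, $y_c(s)\to\infty$ iff $y_0\to\infty$.

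The crux is the damping bound $\tilde D(y,s)\geq -Ce^{-s}$ for $|y|\geq m_0$. Expanding $\mathcal V = U + \tfrac{5y}{2} + O(e^{-s})$, decomposing $U=\overline U+\wt U$ and $U_y=\overline U'+\wt U_y$, and using $1-\tfrac{y^{2/5}}{1+y^{2/5}}=\tfrac{1}{1+y^{2/5}}$, one obtains
\[
\tilde D = \underbrace{\frac{1}{1+y^{2/5}} + \overline U'(y) - \frac{2\overline U(y)\,y^{-3/5}}{5(1+y^{2/5})}}_{=:\,\tilde D_{\rm raw}} + \frac{\wt U_y}{2} - \frac{2\wt U\,y^{-3/5}}{5(1+y^{2/5})} + R(y,s),
\]
where $|R(y,s)|\leq Ce^{-s}$ uniformly in $y$. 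The bootstrap \eqref{Utildey_M-p} gives $|\wt U_y|\leq \tfrac{6}{13(1+y^{2/5})}$, and because $\wt U(0,s)=0$ it also gives $|\wt U(y,s)|\leq \tfrac{6}{13}\int_0^{|y|}(1+(y')^{2/5})^{-1}dy'$. Plugging in these worst-case bounds, the combination $\tilde D_{\rm raw} - \tfrac{3}{13(1+y^{2/5})} - \tfrac{12\,y^{-3/5}}{65(1+y^{2/5})}\int_0^{|y|}(1+(y')^{2/5})^{-1}dy'$ matches exactly the LHS of \eqref{num_1}, so Lemma~\ref{lem-ubar} yields $\tilde D\geq -Ce^{-s}$ on $\{|y|\geq m_0\}$.

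The source $\tilde F$ is controlled termwise: \eqref{Phiyy_decay-p} gives $|e^{-2s}(1+y^{2/5})(\text{pressure block})|\leq Ce^{-\deltaz s/5}$, and the remaining terms decay either in $s$ (via $|\dot\tau|\lesssim e^{-2s}$ or $|e^{3s/2}(\kappa-\dot\xi)|\lesssim e^{-s}$) or in $|y|$ (using $|\overline U''|\lesssim |y|^{-7/5}$ and $|\wt U|\lesssim |y|^{3/5}$). Along $y_c(s)\sim y_0\,e^{5(s-s_0)/2}$ one obtains $\int_{s_0}^s |\tilde F(y_c(s'),s')|\,ds' \leq C\veps^{\deltaz/5}+o(1)$ as $y_0\to\infty$. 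Integrating $\tfrac{d}{ds}\hat Z(y_c,s) = -\tilde D\,\hat Z + \tilde F$ along $y_c$, using $\int_{s_0}^s\tilde D\,ds'\geq -C\veps$, and passing to $\limsup$ as $y_0\to\infty$,
\[
\limsup_{|y|\to\infty}|\hat Z(y,s)| \leq (1+C\veps)\limsup_{|y|\to\infty}|\hat Z(y,s_0)| + C\veps^{\deltaz/5}.
\]

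Finally, \eqref{4.3a-p2-w} together with $\lim_{|y|\to\infty}|y^{2/5}\overline U'(y)|=50^{-1/5}$ from \eqref{UU'-decay} gives $\limsup_{|y|\to\infty}|\hat Z(y,s_0)|\leq \theta/2+50^{-1/5}$; using $\theta=\tfrac13(\tfrac{6}{13}-\Theta)$, a direct calculation yields
\[
\tfrac{6}{13}-2\theta-\bigl(\tfrac{\theta}{2}+50^{-1/5}\bigr) = \tfrac{1}{13}+\tfrac{5\Theta}{6}-50^{-1/5} > 0,
\]
because $\Theta>50^{-1/5}$ by \eqref{theta-val}. Choosing $\veps$ small enough to absorb both the $(1+C\veps)$ factor and $C\veps^{\deltaz/5}$ into this positive gap completes the proof. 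The main obstacle is the sharp non-negativity of $\tilde D$ at infinity: the leading $O(|y|^{-2/5})$ contributions from $\overline U'$ and from $\mathcal V/y$ nearly cancel in $\tilde D_{\rm raw}$, and the slack that remains is exactly what the worst-case bootstrap bounds on $\wt U_y$ and $\wt U$ consume; it is the carefully tuned constants $\tfrac{10}{13}$, $\tfrac{6}{13}$, and $\tfrac{2}{5}$ in \eqref{num_1} that allow this cancellation to close.
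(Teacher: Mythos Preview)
Your argument is correct but takes a genuinely different route from the paper. The paper does \emph{not} work with $\hat Z=(1+y^{2/5})\wt U_y$; instead it sets $\mu:=(y^{2/5}+1)U_y$ (the full gradient, not the residual) and uses the triangle inequality only at the very end. This buys two simplifications. First, the equation for $U_y$ (namely \eqref{CH_diff_1}) has no profile term $\overline U''$ on the right, so the source $F^\mu$ decays purely in $s$ (bounded by $Ce^{-\deltaz s/5}$ via \eqref{Phiyy_decay-p}); there is no $\wt U\,\overline U''$ term to chase along characteristics. Second, the damping $D^\mu$ carries only $\overline U'/2$ rather than $\overline U'$, so after inserting the same worst-case bootstrap bounds one lands on the LHS of \eqref{num_1} \emph{plus} the nonnegative slack $-\overline U'/2$, giving $D^\mu\geq 0$ outright. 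This lets the paper invoke Lemma~\ref{rmk2} as a black box and conclude $\limsup_{|y|\to\infty}|\mu|<\theta$, whence $\limsup|(y^{2/5}+1)(U_y-\overline U')|\leq \theta+50^{-1/5}<\tfrac{6}{13}-2\theta$.

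Your route, by contrast, gets only $\tilde D\geq -Ce^{-s}$ (no slack left after \eqref{num_1}), and your source contains the $\wt U\,\overline U''$ term which is $O(|y|^{-2/5})$ but does not decay in $s$; you then need the exponential growth of characteristics to show its time integral is $o(1)$ as $y_0\to\infty$. This is valid, and your final arithmetic $\tfrac{1}{13}+\tfrac{5\Theta}{6}-50^{-1/5}>0$ checks out, but the paper's change of variable to $\mu$ avoids both subtleties at once.
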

\begin{proof}
	We first note that 
	\begin{equation*}
		\begin{split}
			\limsup_{|y|\rightarrow \infty}|(y^{{2/5}}+1)(U_y-\overline{U}')|&\leq \limsup_{|y|\rightarrow \infty}|(y^{2/5}+1)U_y|+\lim_{|y|\rightarrow \infty}|(y^{2/5}+1)\overline{U}'|
			\\
			&= \limsup_{|y|\rightarrow \infty}|(y^{2/5}+1)U_y|+\frac{1}{50^{1/5}},
		\end{split}
	\end{equation*}
	where we have \eqref{U_taylor_far} with $\beta=1$. 
	Since ${50^{-1/5}}< {6/13}-3\theta$ due to \eqref{theta}, it suffices to show that
	\begin{equation*}\label{3.00}
		\limsup_{|y|\rightarrow \infty} |(y^{2/5}+1)U_y(y,s)|< \theta. 
	\end{equation*}
	Letting $\mu(y,s):=(y^{2/5}+1)U_y(y,s)$, we have from \eqref{CH_diff_1} that
	\begin{equation*}
		\partial_s \mu+D^{\mu}\mu+\mathcal{V} \partial_y \mu =F^{\mu},
	\end{equation*}
	where $\mathcal{V}$ is defined in \eqref{V} and
	\begin{subequations}
		\begin{align*}
			D^{\mu} &:= 1+\frac{U_y}{2}-\frac{2y^{2/5}}{5(1+y^{2/5})}\left(\frac{5}{2}+\frac{U}{y}\right),
			\\
			F^{\mu} &:= -\frac{e^{-2s}}{1-\dot{\tau}}(y^{2/5}+1)\left(\frac{P-(e^{-3s/2}U+\kappa)^2}{\beta}-\frac{2\gamma}{\sqrt{\beta}}(e^{-3s/2}U+\kappa)\right)
			\\
			&\qquad -\left(\frac{\dot{\tau}U_y}{2}-\frac{2y^{2/5}}{5(1+y^{2/5})}\left(\frac{e^{3s/2}(\kappa-\dot{\xi})}{y}+\frac{\dot{\tau}U}{y}\right)\right)\frac{\mu}{1-\dot{\tau}}.
		\end{align*}
	\end{subequations}
%
	Using \eqref{Utildey_M-p} and $\wt{U}(0,s)(0,s)=0$, where $\wt{U}(y,s):=U(y,s)-\overline{U}(y)$, we have
\begin{equation*}
	\begin{split}
		D^{\mu}(y,s)&=1+\frac{\wt{U}_y+\overline{U}'}{2}-\frac{2y^{2/5}}{5(1+y^{2/5})}\left(\frac{5}{2}+\frac{\wt{U}+\overline{U}}{y}\right)
		\\
		&\geq 1-\frac{3}{13(1+y^{2/5})}+\frac{\overline{U}'}{2}-\frac{2y^{2/5}}{5(1+y^{2/5})}\left(\frac{5}{2}+\frac{\overline U}{y}+\frac{6}{13y}\int_{0}^{y}{\frac{dy'}{1+{y'}^{2/5}}}\right)
		\\
		&= \frac{10}{13(1+y^{2/5})}+\frac{\overline{U}'}{2}-\frac{2y^{2/5}}{5(1+y^{2/5})}\left(\frac{\overline U}{y}+\frac{6}{13y}\int_{0}^{y}{\frac{dy'}{1+{y'}^{2/5}}}\right). 
	\end{split}
\end{equation*}
	Thanks to \eqref{num_1} and $\overline{U}'\leq 0$ from \eqref{Wbar_neg}, we see that $D^\mu$ is non-negative for $|y|\geq 2\cdot 10^6$, 
i.e.,  \begin{equation}\label{Dmu}
	\inf_{|y|\geq 2\cdot 10^6}D^{\mu} (y,s) \geq 0.
\end{equation} 
Moreover, from  \eqref{Uy1-p} and \eqref{ax1_0},  
 we deduce that $\|\mu (\cdot, s) \|_{L^{\infty}}\leq C$ for some $C>0$. 
Using  
\eqref{tau}, \eqref{Uy1-p}, \eqref{temp_1-p} and \eqref{Phiyy_decay-p}, we get 
\begin{equation}\label{Fmu}
	\begin{split}
		\|F^{\mu}(y,s)\|_{L^{\infty}(|y|\geq 2\cdot 10^6)}&\leq Ce^{-2s}\left\|(y^{2/5}+1)\left(\frac{P-(e^{-3s/2}U+\kappa)^2}{\beta}-\frac{2\gamma}{\sqrt{\beta}} (e^{-3s/2}U+\kappa)\right)\right\|_{L^{\infty}(\mathbb{R})}
		\\
		&\qquad+C\|\mu\|_{L^{\infty}}\left\||\dot{\tau}||U_y|+\frac{e^{3s/2}|\kappa-\dot{\xi}|}{|y|}+\frac{|\dot{\tau}U|}{|y|}\right\|_{L^{\infty}(|y|\geq 2\cdot 10^6)}
		\\
		&\leq 	Ce^{-\deltaz s/5}+C\left(e^{-2s}+e^{-s}+e^{-2s}\right)
		\\
		&\leq Ce^{-\deltaz s/5}.
	\end{split}
\end{equation}
			Now  we apply Lemma~\ref{rmk2} along with \eqref{UW_far}, \eqref{Dmu} and \eqref{Fmu} to get
			\begin{equation*}
				\limsup_{|y|\rightarrow \infty}|\mu(y,s)|\leq \limsup_{|y|\rightarrow \infty}|\mu(y,s_0)|+C\veps^{\deltaz / 5} \le \frac{\theta}{2}+C\veps^{\deltaz /5} < \theta
			\end{equation*}
			for sufficiently small $\ve>0$. 
			Here, we used \eqref{4.3a-p2-w}, that is equivalent to 
			\begin{equation*}
				\limsup_{|y|\rightarrow \infty}|\mu(y,s_0)|\leq \frac{\theta}{2}.
			\end{equation*}
			This completes the proof. 
		\end{proof}

Finally, we show \eqref{Utildey_init}, thereby closing \eqref{Utildey_M-p}.
\begin{lemma}\label{mainprop_1-p}
Assume that \eqref{Boot_2-p} holds. Then for sufficiently small $\ve>0$, it holds that  
		\begin{equation*}
		|(y^{2/5}+1) ( {U}_y (y, s) - \UU'(y) ) | \le \frac{6}{13}-\theta
	\end{equation*}
	 for all $y\in\mathbb{R}$ and $s\in[s_0,\sigma_1]$, where $\theta$ is defined in \eqref{theta}. 
\end{lemma}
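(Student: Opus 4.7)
To close the bootstrap \eqref{Utildey_M-p} with the strengthened constant $\frac{6}{13}-\theta$, I would work with the weighted perturbation
\begin{equation*}
N(y,s) := (y^{2/5}+1)\bigl(U_y(y,s) - \overline{U}'(y)\bigr),
\end{equation*}
which satisfies $N(0,s)=0$ by the constraints \eqref{constraint-p} and $\overline{U}'(0)=-2$. Starting from \eqref{Eq_diff-p} and writing $\wt U(y,s)=\int_0^y \wt U_y(y',s)\,dy'$ to absorb the $\wt U\,\overline{U}''$ term into an integral operator, I would derive a transport equation
\begin{equation*}
\partial_s N + D^N N + \mathcal{V}\,\partial_y N \;=\; F^N + \int_{\mathbb{R}} K^N(y,s;y')\,N(y',s)\,dy',
\end{equation*}
in which $D^N$ is, up to perturbative pieces in $\dot\tau$, $\wt U_y$, and $\kappa-\dot\xi$, precisely the expression on the left-hand side of \eqref{num_1}; the kernel $K^N$ is supported on $[0,y]$ and carries a factor $\overline{U}''$; and $F^N$ collects the pressure forcing (controlled by \eqref{Phiyy_decay-p}) and $\dot\tau$-type corrections (controlled by \eqref{tau} and \eqref{temp_1-p}).

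The plan is then to apply the maximum principle Theorem~\ref{max_2} after splitting $\mathbb{R}$ into three regions. For the near-origin zone $|y|\le l$ with $l\ll 1$, a Taylor expansion using $U_y(0)=\overline{U}'(0)=-2$, $U_{yy}(0)=\overline{U}''(0)=0$, $|\partial_y^3 U(0)-256|\le C\ve^{\deltaz}$ from Lemma~\ref{U30lem} (with $\overline{U}'''(0)=256$), the bootstrap bound $\|\partial_y^4 U\|_{L^\infty}\le M$ from \eqref{6}, and boundedness of $\overline{U}^{(4)}$ near $0$ yields $|\wt U_y(y,s)|\lesssim \ve^{\deltaz}y^2+M|y|^3$, so $|N(y,s)|\ll \frac{6}{13}-\theta$ uniformly. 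For the exterior zone $|y|\ge m_0$, the inequality \eqref{num_1} supplies a kernel-damping comparison $\int|K^N(y,s;y')|\,dy'\le(1-\lambda^{-1})D^N(y,s)$ mirroring the structure exploited in Lemma~\ref{Wy1_lem_p}. Combined with the initial bound $|N(y,s_0)|\le \Theta=\frac{6}{13}-3\theta$ from \eqref{Utildey_init} and the far-field control $\limsup_{|y|\to\infty}|N(y,s)|<\frac{6}{13}-2\theta$ from Lemma~\ref{dec-lem}, Theorem~\ref{max_2} propagates $\|N(\cdot,s)\|_{L^\infty(|y|\ge m_0)}\le \frac{6}{13}-\theta$.

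The crux, and main obstacle, is the bounded intermediate annulus $l\le |y|\le m_0$, where \eqref{num_1} does not produce positive damping and the clean $L^\infty$ comparison fails. I would handle it by integrating along the characteristic $\partial_s\psi=\mathcal{V}(\psi,s)$: by \eqref{UW_far}, $\mathcal{V}(y,s)\cdot\operatorname{sgn}(y)\ge |y|/8$ for $|y|\ge 1$, so every characteristic starting in the annulus exits into $|y|\ge m_0$ within a transit time bounded by a constant depending only on $l$ and $m_0$. During this uniformly bounded transit, the characteristic bound
\begin{equation*}
|N(\psi(s),s)|\le |N(\psi(s_0),s_0)|+\int_{s_0}^{s}\bigl(|F^N|+\textstyle\int|K^N||N|\bigr)(\psi(s'),s')\,ds'
\end{equation*}
together with the decay $\|F^N\|_{L^\infty(l\le |y|\le m_0)}\lesssim e^{-\deltaz s/5}$ (inherited from \eqref{Phiyy_decay-p}, \eqref{tau}, \eqref{temp_1-p}) and the already-proven exterior/near-origin bounds on $N$ limits the contribution from $K^N$, permits only $O(\ve^{\deltaz})$ growth over the transit. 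This fits within the $\theta$-slack between the initial budget $\Theta=\frac{6}{13}-3\theta$ and the target $\frac{6}{13}-\theta$. The quantitative balancing of this slack against the transit time is the delicate point; it forces the ordering $50^{-1/5}<\Theta<\frac{6}{13}$ imposed in \eqref{theta-val} and requires that $m_0$, although possibly as large as $2\cdot 10^6$ per Lemma~\ref{lem-ubar}, be treated as a universal constant independent of $\ve$.
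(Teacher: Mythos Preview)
Your proposal has a genuine gap in the treatment of the intermediate annulus $l\le|y|\le m_0$. The claim that the transit along characteristics ``permits only $O(\ve^{\deltaz})$ growth'' is not justified: the kernel contribution $\int K^N(y,s;y')N(y',s)\,dy'$ is supported on $y'\in[0,y]$, so for $y$ in the annulus it requires bounds on $N$ in the annulus itself---the argument is circular. Even granting the bootstrap bound $|N|\le 6/13$, the kernel term is $O(1)$ rather than $O(\ve)$, and since $D^N$ need not be positive there, Gronwall over a transit time $T_0\sim \log(m_0/l)$ produces a multiplicative factor $e^{CT_0}$, not an additive $O(\ve^{\deltaz})$. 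With $m_0=2\cdot10^6$ this factor is enormous and destroys the $\theta$-slack.

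The paper handles this region completely differently, and this is the key idea you are missing: it invokes the \emph{other} bootstrap bound \eqref{EP2_1D1-p}, which has already been closed in Lemma~\ref{Wy1_lem_p}. That bound gives $|\wt U_y|\le \frac{y^2}{1000(1+y^2)}$ uniformly, so
\[
|\nu(y,s)|\le (y^{2/5}+1)\cdot\frac{1}{1000}\le \frac{(2\cdot10^6)^{2/5}+1}{1000}<50^{-1/5}<\frac{6}{13}-3\theta
\]
on all of $|y|\le 2\cdot10^6$. This forces any maximizer of $|\nu(\cdot,s)|$ with value at least $\frac{6}{13}-2\theta$ to lie in $|y|\ge m_0$, exactly where \eqref{num_1} supplies $D^\nu\ge\int|K^\nu|$. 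The paper then runs a contradiction argument using Rademacher's theorem on $s\mapsto\|\nu(\cdot,s)\|_{L^\infty}$ rather than Theorem~\ref{max_2}; the latter is not directly applicable here because $D^\nu\to 0$ as $|y|\to\infty$, so the uniform lower bound \eqref{max_2_3} fails. The numerical compatibility between the constant $1/1000$ in \eqref{EP2_1D1-p} and the threshold $m_0=2\cdot10^6$ is precisely what makes the argument close.
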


\begin{proof}
Let $\nu (y,s ) : = (y^{2/5}+1)\wt{U}_y(y,s)$ where $\wt{U} (y, s) := U(y,s) - \overline{U}(y)$. 
From \eqref{CH_diff_1}, $\nu$ satisfies 
\begin{equation}\label{nu_eq1-p}
	\partial_s \nu + D^\nu(y,s)\nu+\mathcal{V}(y,s) \partial_y \nu = F^\nu_1(y,s) +F^\nu_2(y,s) +\int_{\mathbb{R}}{\nu(y',s) K^\nu(y, s; y') \,dy'},
\end{equation}
where $\mathcal{V}$ is defined in \eqref{V}, and
\begin{subequations}
	\begin{align*}
		D^\nu(y,s) &:= 1+\frac{\wt{U}_y+2\overline{U}'}{2}-\frac{2y^{2/5}}{5(1+y^{2/5})}\left(\frac{5}{2}+\frac{\wt{U}+\overline{U}}{y}\right),
		\\
		F_1^\nu(y,s) &:= -\frac{e^{-2s}}{1-\dot{\tau}}(y^{2/5}+1)\left(\frac{P- (e^{-3s/2}U+\kappa)^2}{\beta}-\frac{2\gamma}{\sqrt{\beta}} (e^{-3s/2}U+\kappa) \right)
		\\
		&\qquad -\frac{\dot{\tau}(y^{2/5}+1)(\overline{U}')^2}{2(1-\dot{\tau})} - (y^{2/5}+1)\overline{U}''\left(\frac{\dot{\tau}\overline{U}}{1-\dot{\tau}}+\frac{e^{3s/2}(\kappa-\dot{\xi})}{1-\dot{\tau}}\right),
		\\
		F^\nu_2(y,s) &:= -\left(\frac{\dot{\tau}(\wt{U}_y+2\overline{U}')}{2(1-\dot{\tau})}-\frac{2y^{2/5}}{5(1+y^{2/5})}\left(\frac{\dot{\tau}U}{(1-\dot{\tau})y}+\frac{e^{3s/2}(\kappa-\dot{\xi})}{(1-\dot{\tau})y}\right)\right)\nu (y,s),
		\\
		K^\nu(y,s;y') &:= -\frac{1}{1-\dot{\tau}}(y^{2/5}+1)\overline{U}''(y)\mathbb{I}_{[0,y]}(y')\frac{1}{1+ y'^{2/5}}.
	\end{align*}
\end{subequations}
We first claim that  
\begin{equation}\label{D-K-p}
	D^\nu(y,s)\geq \int_{\mathbb{R}}|K^\nu(y,s;y')|\,dy', \qquad |y|\geq 2\cdot 10^6.
\end{equation}
We note  by the bootstrap assumption \eqref{Utildey_M-p} that  
\begin{equation}\label{nu_D-p}
	\begin{split}
		D^\nu(y,s)&=1+\frac{1}{2}\wt{U}_y+\overline{U}'-\frac{2y^{2/5}}{5(1+y^{2/5})}\left(\frac{5}{2}+\frac{\wt{U}+\overline{U}}{y}\right)
		\\
		&\geq 1-\frac{3}{13(1+y^{2/5})}+\overline{U}'-\frac{2y^{2/5}}{5(1+y^{2/5})}\left(\frac{5}{2}+\frac{\overline U}{y}+\frac{6}{13y}\int_{0}^{y}{\frac{dy'}{1+{y'}^{2/5}}}\right) 
		\\
		&=\frac{10}{13(1+y^{2/5})}+\overline{U}'-\frac{2y^{2/5}}{5(1+y^{2/5})}\left(\frac{\overline U}{y}+\frac{6}{13y}\int_{0}^{y}{\frac{dy'}{1+{y'}^{2/5}}}\right) 
		=: D^\nu_{-}(y),
	\end{split}
\end{equation}
and by \eqref{dottau}, we have
\begin{equation}\label{nu_K-p}
	\begin{split}
		\int_{\mathbb{R}}{|K^\nu(y, s; y')|\,dy'}
		&\leq (1+\veps)(y^{2/5}+1)|\overline{U}''(y)|\int^{|y|}_{0}{\frac{dy'}{1+{y'}^{2/5}}} =: K^\nu_{+}(y).
	\end{split}
\end{equation}
Thanks to \eqref{num_1}, i.e.,   
\begin{multline*}\label{4.43}
	\lambda(y^{2/5}+1)|\overline{U}''(y)|\int^{|y|}_0\frac{dy'}{1+(y')^{2/5}}
	\\
	\leq \frac{10}{13(1+y^{2/5})}+\overline{U}' (y) - \frac{2y^{2/5}}{5(1+y^{2/5})}\left(\frac{\overline{U}( y ) }{y}+\frac{6}{13y}\int^y_0\frac{dy'}{1+ y'^{2/5}}\right), \quad |y|\geq 2\cdot 10^6
\end{multline*}
for some constant $\lambda>1$, we have 
\begin{equation}\label{D-K+}
D^\nu_-(y) \ge  K^\nu_+(y), \quad |y|\geq 2\cdot 10^6, 
\end{equation} 
which implies \eqref{D-K-p}. 

%
%
Next  we show that 
\begin{equation}\label{F-nu-p}
	\|F^\nu_1(\cdot,s)\|_{L^{\infty}(|y|\geq 2\cdot 10^6)} + \|F^\nu_2(\cdot,s)\|_{L^{\infty}(|y|\geq 2\cdot 10^6)} \leq Ce^{-\deltaz s/5}.
\end{equation}
For $F^\nu_1$, by 
\eqref{asymp-y-infty}, \eqref{tau}, \eqref{temp_1-p} and \eqref{Phiyy_decay-p},
we obtain
\begin{equation}\label{F1-p}
	\begin{split}
	|F^\nu_1 (y, s) |&\leq Ce^{-2s}(y^{2/5}+1)\left|\frac{P- (e^{-3s/2}U+\kappa)^2}{\beta}-\frac{2\gamma}{\sqrt{\beta}} (e^{-3s/2}U+\kappa) \right|
	\\
	&\quad +|\dot{\tau}|(y^{2/5}+1) | \overline{U}'|^2+ 2(y^{2/5}+1)|\overline{U}''||\dot{\tau}\overline{U}+e^{3s/2}(\kappa-\dot{\xi})| 
	\\
	&\leq Ce^{-\deltaz s/5}.
	\end{split}
\end{equation}
 Using \eqref{tau} and \eqref{Utildey_M-p}, we have 
\begin{equation*}
	\begin{split}
	|F^\nu_2 (y, s) |	&\leq C\left( |\dot{\tau}|\left(|\wt{U}_y|+2|\overline{U}'|\right)+\left|\frac{\dot{\tau}U}{y}\right|+\frac{e^{3s/2}|\kappa-\dot{\xi}|}{|y|}\right).
	\end{split}
\end{equation*}
Let us estimate the right-hand side term by term. By \eqref{Wbar_neg} and \eqref{EP2_1D1-p}, we have 
\begin{equation}\label{2.64}
	|\dot{\tau}|\left(|\wt{U}_y|+2|\overline{U}'|\right)\leq Ce^{-2s}.
\end{equation}
Also, using $U(0,s)=0$ and $|U_y|\leq C$ from \eqref{Uy1-p},  
\begin{equation*}\label{F2_third-p}
	\begin{split}
		 \frac{|\dot{\tau} U|}{|y|} &\leq \frac{|\dot{\tau}|}{|y|}\left|\int^{y}_0U_y(y',s)\,dy'\right|
		\leq Ce^{-2s },
	\end{split}
\end{equation*}
and by 
\eqref{temp_1-p}, 
\begin{equation}\label{F2_second-p}
	\frac{e^{3s/2}|\kappa-\dot{\xi}|}{|y|}
	\leq Ce^{-s}
\end{equation}
for all $|y|\geq 2\cdot 10^6$.
Combining \eqref{2.64}--\eqref{F2_second-p} , we obtain
\begin{equation}\label{F2-p}
	\|F^\nu_2(\cdot,s)\|_{L^{\infty}(|y|\geq 2\cdot 10^6)} \leq Ce^{-s}.
\end{equation}
Then \eqref{F1-p} and \eqref{F2-p} give \eqref{F-nu-p}.

In order to conclude the proof, we claim that
\begin{equation}\label{claim_reg-p}
	\|\nu(\cdot,s)\|_{L^{\infty}}\leq \frac{6}{13}-\theta, \qquad s\in[s_0,\sigma_1].
\end{equation}
Suppose to the contrary that \eqref{claim_reg-p} fails. 
Thanks to the fact that $\nu \in C([s_0, \sigma_1]; C^3(\mathbb{R}))$ and the initial condition  \eqref{4.3a-p-w}, i.e.,  $|\nu(\cdot,s_0)|\leq {6/13}-3\theta$,  
we have that 
\begin{equation}\label{s2-def}
s_2:= \min \{ s \in [s_0, \sigma_1]: \|\nu(\cdot,s)\|_{L^{\infty}} = {6/13}-\theta \}
\end{equation} 
 is well-defined, and that there exists $s_1\in (s_0, s_2)$  such that
\begin{equation}\label{34-p}
	\frac{6}{13}-2\theta = \|\nu(\cdot,s_1)\|_{L^{\infty}}  \le \|\nu(\cdot,s)\|_{L^{\infty}} < \frac{6}{13}-\theta \quad \text{ for all } s\in(s_1,s_2). 
\end{equation}
Then, for each $s\in[s_1,s_2]$, since $\nu \in C[s_0, \sigma_1]; C^3(\mathbb{R}))$ and $\nu$ enjoys the decay property \eqref{Ut_dec_p} in Lemma~\ref{dec-lem},  we see that $C^{\nu}:=\{ y \in \mathbb{R} :  \partial_y \nu(y,s) =0 \}$  is a non-empty closed set of the critical points of $\nu$,  and that $|\nu (\cdot, s) |$ attains its maximum at a point in $C^\nu$, i.e., $\| \nu(\cdot, s) \|_\infty = \max\{ | \nu(y, s) | : y \in C^{\nu}\}.$ Thus, for all $s \in [s_1,s_2]$, we can define $$y_*(s):=\min\{ y\in C^{\nu} : |\nu(y,s) | = \|\nu(\cdot, s) \|_{L^{\infty}}    \} \in(-\infty, \infty)$$
such that
\begin{equation}\label{claim2_nudec0-p}
	\|\nu(\cdot, s) \|_{L^{\infty}} = |\nu(y_*(s), s)|.
\end{equation}
In view of \eqref{34-p} and  \eqref{claim2_nudec0-p}, we see that $\nu(y_*(s), s) \ne 0$ and $\partial_y \nu(y_*(s), s) =0$  for each $s\in[s_1, s_2]$.
On the other hand, 
 thanks to \eqref{EP2_1D1-p},    
\begin{equation*}
	\|\nu(\cdot,s)\|_{L^{\infty}(|y|\leq 2\cdot 10^6)}\leq ((2\cdot 10^6)^{2/5}+1)\|\wt{U}_y(\cdot,s)\|_{L^{\infty}(|y|\leq 2\cdot 10^6)}\leq  \frac{ (2\cdot 10^6)^{2/5}+1 }{1000} < 50^{-1/5} < \frac{6}{13}-3\theta, 
\end{equation*}  
where the last inequality holds true since $\theta>0$ is the number defined in \eqref{theta}. This together with \eqref{34-p} implies that   
\[ \|\nu(\cdot,s)\|_{L^{\infty}(|y|\leq 2\cdot 10^6)} <  \frac{6}{13}-3\theta <  \frac{6}{13}-2\theta 
 \le \|\nu(\cdot,s)\|_{L^{\infty} (\mathbb{R} ) },  \]
 from which we deduce that
\begin{equation*}
	|y_*(s)|\geq 2\cdot 10^6.
\end{equation*} 
Hence   by  \eqref{D-K-p},
 \eqref{nu_D-p}, \eqref{nu_K-p}, \eqref{D-K+} and \eqref{claim2_nudec0-p},  
if $\nu(y_*(s),s)> 0$, then
\begin{equation}
	\begin{split}\label{nu_DK-p}
		D^\nu(y_*(s),s)\nu(y_*(s),s)&\geq D^\nu_{-}(y_*(s))\|\nu(\cdot,s)\|_{L^{\infty}}
		\\
		&\geq K^\nu_{+}(y_*(s))\|\nu(\cdot,s)\|_{L^{\infty}}\geq \left|\int_{\mathbb{R}}K^\nu(y_*(s),s;y')\nu(y',s)\,dy'\right|.
	\end{split}
\end{equation}
Similarly, if  $\nu(y_*(s),s) < 0$, we have 
\begin{equation}
	\begin{split}\label{nu_DK2-p}
		D^\nu(y_*(s),s)\nu(y_*(s),s)&\le - D^\nu_{-}(y_*(s))\|\nu(\cdot,s)\|_{L^{\infty}}
		\\
		&\le - K^\nu_{+}(y_*(s))\|\nu(\cdot,s)\|_{L^{\infty}} \le - \left|\int_{\mathbb{R}}K^\nu(y_*(s),s;y')\nu(y',s)\,dy'\right|.
	\end{split}
\end{equation}
Using  \eqref{F-nu-p}, \eqref{nu_DK-p}, \eqref{nu_DK2-p} and the property $\partial_y \nu(y_*(s), s) =0$, which is due to the fact that $y_*(s)\in C^{\nu}$, we have from \eqref{nu_eq1-p} that if   $\nu(y_*(s),s)> 0$,
\begin{equation}\label{nu_temp-p}
	\begin{split}
		\partial_s \nu(y_*(s) ,s) & \leq  Ce^{-\deltaz s/5}+\int_{\mathbb{R}} \nu(y',s)K^\nu(y_*,s;y')\,dy'-D^\nu(y_*,s)\nu
		\leq Ce^{-\deltaz s/5},
	\end{split}
\end{equation}
and that if $\nu(y_*(s),s)< 0$, 
\begin{equation}\label{nu_temp--p}
	\begin{split}
		\partial_s \nu(y_*(s) ,s) & \ge - Ce^{-\deltaz s/5}.
	\end{split}
\end{equation} 
%
%
%
%
%
Then for fixed $s > s_0$, by the definition of $y_*$, it holds that
 \[ 
 \|\nu(\cdot,s-h)\|_{L^\infty} \geq |\nu(y_*(s) - h\mathcal{V} (y_*(s),s), s - h)| 
\] 
for any small $h>0$. 
Then, it is straightforward to check that 
\begin{equation}\label{AP_R1} 
\begin{split}
\lim_{h \to 0^+} \frac{\|\nu(\cdot,s-h)\|_{L^\infty} - \|\nu(\cdot,s)\|_{L^\infty}}{-h} 
& \leq (\partial_s + \mathcal{V} (y_*(s),s)\partial_y)|\nu|(y,s)|_{y=y_*(s)}, 
\end{split}
\end{equation} 
provided that the limit on the LHS exists. 
Note that, by Rademacher's theorem, 
$ \| \nu(\cdot, s)\|_{L^{\infty}}$, being Lipschitz continuous in $s$, is differentiable at almost all $s\in[s_1, s_2]$. Since  $\partial_y \nu(y_*(s), s) =0$, we deduce from \eqref{AP_R1} that 
\begin{equation} \label{L-thm-p}
\begin{split}
\frac{d}{ds} \| \nu(\cdot, s)\|_{L^{\infty}} 
& \leq  \left\{ \begin{array}{l l}
\partial_s \nu(y, s)|_{y=y_*(s)} & \text{if } \nu(y_*(s),s)>0, \\
-\partial_s \nu(y, s)|_{y=y_*(s)} & \text{if } \nu(y_*(s),s)<0
\end{array}
\right.  
\end{split}
\end{equation} 
for almost all $s\in[s_1, s_2]$. 

%
%
%
%
%
%

Hence, integrating \eqref{L-thm-p} with respect to $s$ over $[s_1, s_2]$, and using  \eqref{nu_temp-p} and \eqref{nu_temp--p}, we have 
\begin{equation*}
\begin{split} 
 \| \nu(\cdot, s_2) \|_{L^{\infty}} 
& =  \| \nu(\cdot, s_1) \|_{L^{\infty}} +  \int_{s_1}^{s_2} \frac{d}{ds} \| \nu(\cdot, s)\|_{L^{\infty}} \; ds  
\\
& \le \| \nu(\cdot, s_1) \|_{L^{\infty}} + \int_{s_1}^{s_2} C e^{-\deltaz s/5} \; ds \\
&  \leq  \| \nu(\cdot, s_1) \|_{L^{\infty}} + C \veps^{\deltaz/5} 
\\
&= \frac{6}{13}- 2\theta + C \ve^{\deltaz /5} < \frac{6}{13}-\theta, 
\end{split} 
\end{equation*}
provided that $\ve>0$ is sufficiently small. 
This leads to a contradiction to the choice of $s_2$ in \eqref{s2-def}. This proves \eqref{claim_reg-p}, and completes the proof of Lemma~\ref{mainprop_1-p}. 
\end{proof}

\section{Self-similar blow-up for the Hunter-Saxton equation}\label{HS-proof}

In this section, we study the blow-up solutions for the Hunter-Saxton (HS) equation:
\begin{equation}\label{HS}
	v_{xt} + v v_{xx} + \frac12 v_x^2 =0, \quad
	 x\in\mathbb{R}, \quad t\ge-1.
\end{equation}
Similar to \eqref{CH}, the HS equation \eqref{HS} can be written as
\begin{subequations}\label{HS_sys}
	\begin{align}
		&v_t+vv_x=-q_x,\label{HS_1}
		\\
		&q_{xx}=- \frac{1}{2} v_x^2.\label{HS_2}
	\end{align}
\end{subequations} 
We consider the initial value problem \eqref{HS} on $\mathbb{R}\times [-1, \infty)$, i.e., the initial time $t=-1$, with the initial data $v_0\in H^5(\mathbb{R})$ satisfying \eqref{init_v}--\eqref{4.3a-v2} with $\alphav=1$.
As stated in Theorem~\ref{thm-HS}, we check that the solution $v$ blows up at $t=0$ at which $v(\cdot, 0)\in 
C^{3/5}$. 
Since the proof of Theorem~\ref{thm-HS} is similar to, yet simpler than, that of Theorem~\ref{mainthm} for the CH equation, we outline it here with emphasis on noteworthy differences. 

Similarly as in the case of the CH equation, we introduce the self-similar variables and modulations as 
\begin{equation}\label{H-ys}
	y(x,t) := \frac{x-\xi(t)}{(\tau(t)-t)^{5/2}}, \quad s(t) :=-\log\left( \tau(t) - t \right)
\end{equation}
and
\begin{equation}\label{H-WZPhi-p}
	v(x,t)-\kappa(t) =: e^{-3s/2} V(y,s),  \quad  q(x,t) = : Q(y,s),
\end{equation}
where  $\tau, \xi, \kappa: [-1, 0) \to \mathbb{R}$ are continuous satisfying the initial conditions
\begin{equation*}
	\tau(-1)=0,\quad \xi(-1)=0,\quad \kappa(-1)=v(0,-1).
\end{equation*}
In fact, $\tau, \xi$, and $\kappa$ are subsequently chosen to satisfy the constraints: 
\begin{equation}\label{HS-constraint}
	V(0,s)=0, \quad V_y(0,s)= -2, \quad V_{yy}(0,s)=0. 
\end{equation} 
Inserting the ansatz \eqref{H-WZPhi-p} into \eqref{HS_sys}, we obtain the equations for $V$ and $Q$:   
\begin{subequations}
	\begin{align}
		&\partial_s V - \frac{3}{2} V+\left(\frac{V}{1-\dot{\tau}} + \frac{5}{2} y  + \frac{e^{3s/2}(\kappa-\dot{\xi})}{1-\dot{\tau}}\right)V_y=-\frac{e^{s/2}\dot{\kappa}}{1-\dot{\tau}}-\frac{e^{3s} Q_y}{1-\dot{\tau}}, \label{HS_diff_0'}
		\\
		&-  Q_{yy} =  \frac{e^{-3s}V_y^2}{2} . \label{HS_P'}
	\end{align}
\end{subequations}
Applying $\partial_y^n$ for $n=1,2$ to \eqref{HS_diff_0'}, we have
\begin{subequations}\label{HS_diff'}
	\begin{align}
		&\left( \partial_s + 1 + \frac{V_y}{2(1-\dot{\tau})} \right)V_y + \left(\frac{V}{1-\dot{\tau}} + \frac{5}{2} y  + \frac{e^{3s/2}(\kappa-\dot{\xi})}{1-\dot{\tau}}\right) V_{yy} = 0,
		\label{HS_diff_1'} 
		\\ 
		& \left( \partial_s + \frac{7}{2} + \frac{2V_y}{1-\dot{\tau}} \right)V_{yy} + \left(\frac{V}{1-\dot{\tau}} + \frac{5}{2} y  + \frac{e^{3s/2}(\kappa-\dot{\xi})}{1-\dot{\tau}}\right) \partial_y^3 V  = 0.  \label{HS_diff_2'} 
	\end{align}
\end{subequations}
Using the constraints \eqref{HS-constraint} for \eqref{HS_diff_0'}, \eqref{HS_diff_1'} and \eqref{HS_diff_2'}, we obtain a set of ODEs for $\tau, \kappa$ and $\xi$:  
\begin{equation}\label{H-b-mod}
		  \dot{\tau} = 0,  \quad \dot{\kappa} = -e^{5s/2}Q_y(0,s), \quad \dot{\xi} = \kappa. 
\end{equation}
In particular, thanks to $\dot\tau(t) =0$ with $\tau(-1) =0$, we see that  $\tau(t) \equiv0$, and  that the maximal existence time, defined as $T_\ast:= \inf \{ t\ge -1 : \tau(t) = t \}$ is $T_\ast =0$.

Updating \eqref{H-ys} and \eqref{HS_diff_0'} with \eqref{H-b-mod}, i.e., the self-similar variables in \eqref{H-ys} are simplified as 
\begin{equation*}\label{H-ys'}
	y(x,t) := \frac{x-\xi(t)}{(-t)^{5/2}}, \quad s(t) :=-\log\left(- t \right),
\end{equation*}
we obtain
\begin{equation}\label{HS_diff_0}
		\partial_s V - \frac{3}{2} V+\mathcal{U}V_y=-e^{s/2}\dot{\kappa}-e^{3s} Q_y, 
\end{equation}
where $\textstyle \mathcal{U}:= V + \frac{5}{2} y$. 
By recursively differentiating \eqref{HS_diff_0} in $y$, we have 
\begin{subequations}\label{HS_diff}
	\begin{align}
		&\left( \partial_s + 1 + \frac{V_y}{2} \right)V_y + \mathcal{U} V_{yy} = 0,
		\label{HS_diff_1} 
		\\ 
		& \left( \partial_s + \frac{7}{2} + 2V_y \right)V_{yy} + \mathcal{U} \partial_y^3 V  = 0,  \label{HS_diff_2} 
		\\
		& \left( \partial_s + 6 + 3V_y \right)\partial_y^3V + \mathcal{U} \partial_y^4 V  
		=  - 2 V_{yy}^2, \label{HS_diff_3}
		\\
		& \left( \partial_s + \frac{17}{2} + 4V_y \right) \partial_y^4V + \mathcal{U} \partial_y^5 U 
		= -7V_{yy}\partial_y^3V. \label{HS_diff_4}
	\end{align}
\end{subequations}
 
Here, we note that, in contrast to \eqref{CH_P} and \eqref{CH_diff}, any modulation functions $\tau, \xi$ and $\kappa$ are involved in  the equations \eqref{HS_P'} and \eqref{HS_diff}. This makes the analysis much simpler than that for the CH equation.
Since the smallness condition on $\veps > 0$ in Theorem~\ref{mainthm} and Proposition~\ref{mainprop} is only required to bound the modulation functions, this smallness condition can be removed  in Theorem~\ref{thm-HS}. Consequently, one can set the initial time as $t_0 = -1$, then the blow-up time becomes $T_\ast =0$, as stated in Theorem~\ref{thm-HS}. 

	It is straightforward to check that \eqref{HS} possesses an invariant Hamiltonian integral. Multiplying \eqref{HS} by $v_x$, we obtain
\begin{equation*}
	\frac12 \frac{d}{dt} \int_{\mathbb{R}}  v_x^2\,dx+\frac{1}{2}\int_{\mathbb{R}}(vv_x^2)_x\,dx 
=0. 
\end{equation*}
This implies that   
\begin{equation}\label{HS-H}
	\Hv(t):=\int_{\mathbb{R}} v_x^2(x,t) \,dx=\Hv(-1).
\end{equation} 
Now, we will obtain the uniform bound of $Q_y$, $\kappa$ and $\xi$. Although the bounds for $Q_y$, $\kappa$ and $\xi$ appear to be the same as those in Lemma~\ref{P-Py-lem} and Lemma~\ref{xilem}, we obtain them slightly differently, which we present in the following.
\begin{lemma}
	It holds that 
	\begin{equation}\label{H-P-Py}
		e^{5s/2} | Q_y (y, s) | \le C, \quad y\in \mathbb{R}, \;s\ge s_0,
	\end{equation}
and
	\begin{equation*}
		|\kappa(t) |\leq C, \qquad |\xi(t) |\leq C,
	\end{equation*}
	where $C=C(\Hv(-1))>0$ is a uniform constant depending only on the initial data $\Hv(-1) = \| \partial_x v_0 \|_{L^2}^2$. 
\end{lemma}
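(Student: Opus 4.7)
My plan is to convert the stated bounds on $Q_y$, $\kappa$, and $\xi$ (which live in self-similar and modulation variables) into uniform bounds in physical variables, then exploit the conservation of $\Hv$ together with the explicit modulation ODEs \eqref{H-b-mod}. The key observation is that, via \eqref{H-ys} and \eqref{H-WZPhi-p}, the chain rule gives $v_x(x,t) = e^s V_y(y,s)$ and $q_x(x,t) = e^{5s/2} Q_y(y,s)$, so the desired estimate \eqref{H-P-Py} is equivalent to the pointwise bound $\|q_x(\cdot,t)\|_{L^\infty_x} \le C$.

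To establish this bound, I would note that, in contrast to \eqref{CH_2} for the CH equation, the Poisson-type equation \eqref{HS_2} does not admit an exponentially decaying resolvent kernel, so the convolution strategy of Lemma~\ref{P-Py-lem} is unavailable. Instead I would use the canonical antiderivative
\begin{equation*}
q_x(x,t) = \frac{1}{4}\int_{\mathbb{R}} \mathrm{sgn}(x-x')\, v_x^2(x',t)\, dx',
\end{equation*}
which satisfies $q_{xx} = -\frac{1}{2}v_x^2$ and is consistent with \eqref{HS_1}. This representation immediately yields
\begin{equation*}
|q_x(x,t)| \le \frac{1}{4}\|v_x(\cdot,t)\|_{L^2}^2 = \frac{1}{4}\Hv(t) = \frac{1}{4}\Hv(-1),
\end{equation*}
where the last equality uses the conservation identity \eqref{HS-H}. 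Translating back through $q_x = e^{5s/2}Q_y$ gives $e^{5s/2}|Q_y(y,s)| \le \Hv(-1)/4$, as required.

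With this uniform bound on $q_x$ in hand, the remaining estimates on $\kappa$ and $\xi$ follow from integrating the modulation system \eqref{H-b-mod}. From $\dot\kappa(t) = -e^{5s/2}Q_y(0,s) = -q_x(\xi(t),t)$, the above yields $|\dot\kappa(t)| \le \Hv(-1)/4$, so integrating over $t\in[-1,0)$ produces
\begin{equation*}
|\kappa(t)| \le |\kappa(-1)| + \tfrac{1}{4}\Hv(-1)(t+1) \le |v_0(0)| + \tfrac{1}{4}\Hv(-1).
\end{equation*}
Then $\dot\xi(t) = \kappa(t)$ and $\xi(-1)=0$ give $|\xi(t)| \le \int_{-1}^{t}|\kappa(t')|\,dt' \le |v_0(0)| + \tfrac{1}{4}\Hv(-1)$, uniform on $[-1,0)$. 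Here $|v_0(0)|$ is controlled by the hypothesis $v_0 \in H^5(\mathbb{R})$ via the Sobolev embedding $H^5 \hookrightarrow L^\infty$, so all constants are determined by the initial data.

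The main (and essentially only) obstacle is pinning down the correct gauge for $q_x$: since \eqref{HS_2} determines $q_x$ merely up to an additive function of $t$, obtaining a uniform $L^\infty$ bound requires identifying the symmetric antiderivative displayed above as the unique bounded choice compatible with rapidly decaying $v_x$. Once this representation is fixed, the proof reduces to an energy estimate plus the integration of two elementary ODEs, and is considerably simpler than its CH analogue in Lemma~\ref{P-Py-lem} and Lemma~\ref{xilem}, reflecting the fact that here $\dot\tau \equiv 0$ and no smallness of $\veps$ is needed.
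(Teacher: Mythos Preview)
Your approach is essentially the same as the paper's: bound $q_x$ by integrating $q_{xx}=-\tfrac12 v_x^2$ against the conserved quantity $\Hv$, then feed this into the modulation ODEs \eqref{H-b-mod}. Two small points are worth correcting. First, your representation has a sign slip: differentiating $\tfrac14\int_{\mathbb{R}}\mathrm{sgn}(x-x')\,v_x^2(x')\,dx'$ gives $+\tfrac12 v_x^2$, not $-\tfrac12 v_x^2$; this does not affect the $L^\infty$ bound. Second, your gauge argument (``the unique bounded choice'') is not quite right, since the one-sided antiderivative $-\tfrac12\int_{-\infty}^x v_x^2$ is equally bounded. The paper fixes the gauge by observing directly from $v\in C([-1,T_\ast);H^5)$ and \eqref{HS_1} that $\lim_{y\to-\infty}Q_y(y,s)=0$, and then integrates $Q_{yy}$ from $-\infty$; this is the cleanest way to identify the actual $q_x$ appearing in the system and yields $|Q_y|\le \tfrac12 e^{-5s/2}\Hv(-1)$. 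With that adjustment your argument is complete and matches the paper's.
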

\begin{proof}
We note from $v \in C([-1,T_\ast); H^5(\mathbb{R}))$ and \eqref{HS_1}, that 
$\textstyle \lim_{y\rightarrow -\infty}Q_y(y,s)=0$. 
	Then, we get
	\begin{equation*}
		\begin{split}
			| Q_y(y,s) | & \le 
			\int^y_{-\infty} | Q_{yy}(z,s) | \,dz= \frac{e^{-3s}}{2}\int^y_{-\infty}V_y^2(z,s)\,dz 
			 \leq 
			   \frac{e^{-5s/2}}{2}\int_{\mathbb{R}}v_x^2(x,t)\,dx
			\leq Ce^{-5s/2},
		\end{split}
	\end{equation*}
	where we used \eqref{HS_P'} and \eqref{HS-H}. 
	This proves \eqref{H-P-Py}.

	Also, by \eqref{H-b-mod} and \eqref{H-P-Py}, we have 
	\begin{equation*}
		|\kappa(t)|\leq |\kappa_0|+\int^t_{-1}|\dot{\kappa}(t')|\,dt' \leq |\kappa_0|+\int^\infty_{0}e^{3s'/2}| Q_y (0, s') |\,ds' \leq C.
	\end{equation*} 
On the other hand, in view of the relation $\dot{\xi} = \kappa$ 
	from  \eqref{H-b-mod}, we have
	\begin{equation*}
		|\xi (t) |\leq \int^t_{-1} |\dot{\xi} (t') |\,dt'
		=\int^s_{0}|\dot{\xi}|e^{-s'}\,ds' 
		\leq C\int^s_{0}e^{-s'}ds'
		\leq C.
	\end{equation*}
	This completes the proof. 
\end{proof}
With the aforementioned differences in hand, one can readily complete the proof of Theorem~\ref{thm-HS} by loosely following the proof of Theorem~\ref{mainthm}. 

\section{Appendix}

\subsection{Inequalities related to $\overline{U}$ and its derivatives}

In this subsection, we shall sequentially verify the inequalities, \eqref{num_4} -- \eqref{num_1} in Lemma~\ref{lem-ubar},  where $\overline{U}$ is the solution $\overline{U}_\beta(y)$ with $\beta=1$, of the ODE problem \eqref{Ueq}--\eqref{decay-infty}.
First, we prove \eqref{num_4}.
\begin{lemma}\label{5.1}
	It holds that
	\begin{equation}\label{num-4-0}
		2+\overline{U}'(y) - \frac{6 y^2}{5(1+y^2)}\geq 0 
	\end{equation}
	 for all $y \in \RR$. 
\end{lemma}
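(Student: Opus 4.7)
The plan is to exploit the explicit first integral of the profile ODE obtained in the proof of Proposition~\ref{Profile-construct}, which reduces \eqref{num-4-0} to a one-variable polynomial bound. First I would parametrize by $p(y) := -\overline{U}'(y)$; by \eqref{Wbar_neg} we have $p(y)\in[0,2]$, with $p(0)=2$ and $p(y)\to 0$ as $|y|\to\infty$ by \eqref{asymp-y-infty}. Since $\overline{U}$ is odd, both sides of \eqref{num-4-0} are even in $y$, so it suffices to work on $y\ge 0$. Raising \eqref{U0FAR} to the fifth power (with $\beta=1$) yields the exact relation
\begin{equation*}
y^{2}\, p(y)^{5} \;=\; \frac{(2-p(y))\bigl(2p(y)^{2}+2p(y)+3\bigr)^{2}}{900}, \qquad y\ge 0,
\end{equation*}
which will be the main algebraic tool.

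Next I would rewrite \eqref{num-4-0} in the equivalent form $5(2-p)\ge y^{2}(5p-4)$. For $p\in[0,4/5]$ the right-hand side is non-positive and the bound is immediate; at $y=0$, equivalently $p=2$, the inequality degenerates to $0\ge 0$. For the remaining range $p\in(4/5,2)$ I would substitute the explicit expression for $y^{2}$ displayed above, cancel the positive factor $2-p$, clear denominators, and expand $(5p-4)(2p^{2}+2p+3)^{2}$. After this bookkeeping the inequality collapses to the single-variable polynomial bound
\begin{equation*}
P(p) \;:=\; 4480\,p^{5} - 24\,p^{4} - 48\,p^{3} + 4\,p^{2} + 3\,p + 36 \;\ge\; 0 \quad \text{for } p\in[4/5,2].
\end{equation*}

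The last step is to verify this polynomial bound, which is the only real computation and is essentially one line: using $p\le 2$ to estimate $24\,p^{4}+48\,p^{3}\le 24\cdot 16+48\cdot 8=768$ and dropping the nonnegative lower-order terms yields $P(p)\ge 4480\,p^{5}-768$. The right-hand side is increasing in $p$, and at $p=4/5$ it equals $4480\cdot (4/5)^{5}-768=1468-768=700>0$. The only ``obstacle'' here is the algebraic manipulation needed to arrive at $P(p)$; once it is written down, the positivity check is immediate, so no genuine difficulty is expected.
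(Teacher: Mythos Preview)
Your argument is correct. Both your proof and the paper's hinge on the first integral \eqref{U0FAR} and on the same split point $\overline{U}'=-4/5$ (equivalently $p=4/5$), so the underlying mechanism is the same. The execution differs: the paper first differentiates, reducing \eqref{num-4-0} to the pointwise bound $\overline{U}''(y)\ge \frac{12y}{5(1+y^2)^2}$, and then combines \eqref{U''_TEMP} with \eqref{U0FAR} to turn this into an inequality in $\overline{U}'$ alone; you instead substitute the closed-form expression for $y^2$ directly into the integrated inequality and arrive at the polynomial $P(p)\ge 0$. Your route is a bit more streamlined---it avoids \eqref{U''_TEMP} and the derivative comparison, and the final polynomial check is entirely elementary---while the paper's version makes the monotone structure in $p$ more visible through the explicit rational function $\frac{60(-\overline{U}')^6}{2(\overline{U}')^2-2\overline{U}'+3}$. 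One minor quibble: $4480\cdot(4/5)^5=917504/625$ is not exactly $1468$, but it exceeds $1468$, so your conclusion stands.
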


\begin{proof}
	Noting that $\textstyle 2+\overline{U}'(y)=\int^y_0 \overline{U}''(y')\,dy'$ and $\textstyle  \frac{y^2}{1+y^2}=\int^y_0 \frac{2y'}{(1+y'^2)^2}\,dy'$, we see that \eqref{num-4-0} is equivalent to 
	\begin{equation*}
		\int^y_0 \overline{U}''(y')-\frac{12 y'}{5(1+y'^2)^2} \,dy' \geq 0 \quad \text{for all }y \in \RR.
	\end{equation*}
	Since $\overline{U}(y)$ is an odd function, we can, without loss of generality, restrict our consideration to the case $y > 0$. 
	Hence, it is enough to show that 
	\begin{equation*}
		\overline{U}''(y)\geq \frac{12 y}{5(1+y^2)^2}, \qquad y> 0.
	\end{equation*}
	First, we consider the region $\{ y > 0 : -2\leq \overline{U}'(y)\leq -4/5\}$.  
	From \eqref{U0FAR}, we have 
	\begin{equation*}
		\left(\frac{2+\overline{U}'}{y^2}\right)^{1/2}=\frac{30(-\overline{U}')^{5/2}}{2(\overline{U}')^2-2\overline{U}'+3}.
	\end{equation*}
	By this and \eqref{U''_TEMP}, for $y > 0$ such that   $-2\leq \overline{U}' (y) \leq -4/5$, we have
	\begin{equation*}
		\begin{split}
			\frac{(1+y^2)^2}{y}\overline{U}''(y)&=\frac{2(1+y^2)^2}{y}(2+\overline{U}')^{1/2}(-\overline{U}')^{7/2}
			\\
			&\geq 2\left(\frac{2+\overline{U}'}{y^2}\right)^{1/2}(-\overline{U}')^{7/2}
			\\
			&=\frac{60(-\overline{U}')^{6}}{2(\overline{U}')^2-2\overline{U}'+3}\geq \frac{60\cdot (4/5)^6}{2\cdot (4/5)^2+2\cdot (4/5)+3}> \frac{12}{5}. 
		\end{split}
	\end{equation*}
On the other hand, it is easy to see in the region $\{ y>0 : -4/5\leq \overline{U}'(y) \leq 0\}$ that 
	\begin{equation*}
		2+\overline{U}'-\frac{6y^2}{5(1+y^2)}\geq \frac{6}{5}-\frac{6y^2}{5(1+y^2)}\geq 0.
	\end{equation*}
Therefore, \eqref{num-4-0} holds for all $y>0$ and it is trivial for $y=0$. By odd symmetry, \eqref{num-4-0} holds for all $y\in\mathbb{R}$. 
\end{proof}

Now, we prove \eqref{num_2}--\eqref{num_3}.
\begin{lemma}\label{5.2}
	It holds that 
	\begin{equation*}\label{num_2-2}
		1+\overline{U}'+\frac{2}{1+y^2}\left(\frac{5}{2}+\frac{\overline 	U}{y}\right)\geq \frac{y^2}{5(1+y^2)} 
	\end{equation*}
	and 
	\begin{equation*}\label{num_3-3}
		\frac{7}{2}+2\overline{U}'+\frac{1}{1+y^2}\left(\frac{5}{2}+\frac{\overline{U}}{y}\right) \geq \frac{19y^2}{10(1+y^2)}
	\end{equation*}
	 for all $y \in \RR$. 
\end{lemma}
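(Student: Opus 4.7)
The plan is to reduce both inequalities to polynomial inequalities in the single parameter $p := -\overline{U}'(y) \in (0, 2]$. By odd symmetry of $\overline{U}$, both sides of each inequality are even in $y$, so it suffices to treat $y \geq 0$. By \eqref{U''_TEMP}, $\overline{U}''(y) > 0$ for $y > 0$, so $y \mapsto p$ is a strictly decreasing continuous bijection from $[0, \infty)$ onto $(0, 2]$ with $p = 2$ at $y = 0$.

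The crucial closed-form identity comes from dividing \eqref{3.4} by $y$ and substituting the relation $y p^{5/2} = (2-p)^{1/2} Q(p)/30$, obtained by rewriting \eqref{U0FAR}, where $Q(p) := 2p^2 + 2p + 3$. The square roots cancel and give
\begin{equation*}
\frac{5}{2} + \frac{\overline{U}(y)}{y} = \frac{15}{2\, Q(p)}, \qquad y^2 = \frac{(2-p)\, Q(p)^2}{900\, p^5}.
\end{equation*}
Inserting these into \eqref{num_2}, clearing the positive denominator $900\, p^5 \cdot 5 Q (1+y^2)$, and using the factorization
\begin{equation*}
Q(p)(1-p) + 15 = (2-p)(2p^2 + 4p + 9),
\end{equation*}
obtained by noting $p=2$ is a root of the left-hand side and performing polynomial division, reduces \eqref{num_2} to the equivalent form
\begin{equation*}
(2-p)\bigl[4500\, p^5 (2p^2 + 4p + 9) - (5p - 4)\, Q(p)^3\bigr] \geq 0, \quad p \in [0, 2].
\end{equation*}
An entirely parallel manipulation, using the analogous identity $(35 - 20p)\, Q(p) + 75 = 10(2-p)(4p^2 + 5p + 9)$, reduces \eqref{num_3} to
\begin{equation*}
(2-p)\bigl[2250\, p^5 (4p^2 + 5p + 9) - (5p - 4)\, Q(p)^3\bigr] \geq 0, \quad p \in [0, 2].
\end{equation*}

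Since $(2-p) \geq 0$ on $[0, 2]$, it remains to verify that the bracketed septic polynomial is non-negative. For $p \in [0, 4/5]$ this is immediate, because $5p - 4 \leq 0$ makes both summands individually non-negative. For $p \in [4/5, 2]$ I would expand explicitly; both brackets take the form
\begin{equation*}
a_7 p^7 + a_6 p^6 + a_5 p^5 - 160\, p^4 - 130\, p^3 + 90\, p^2 + 81\, p + 108
\end{equation*}
with positive coefficients satisfying $a_7 \geq 8960$ and $a_5, a_6 > 0$. Non-negativity then follows from two elementary observations: the quadratic tail $90 p^2 + 81 p + 108$ has negative discriminant ($81^2 = 6561 < 4 \cdot 90 \cdot 108 = 38880$) and is therefore bounded below by roughly $89$, which absorbs the small negative monomials near $p = 0$; and on $[1, 2]$ the bounds $a_7 p^7 \geq a_7 p^4 \geq 160 p^4$ together with $a_6 p^6 \geq a_6 p^3 \geq 130 p^3$ cancel the negative terms entirely. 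The main obstacle is the algebraic bookkeeping for the substitution and for the two factorizations involving the cubic $-2p^3 - p + 18$ and its companion $-40p^3 + 30p^2 + 10p + 180$; once those are set down, the positivity check splits cleanly into the subintervals $[0, 4/5]$ (trivial) and $[4/5, 2]$ (dominant-degree comparison).
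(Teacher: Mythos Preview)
Your approach is correct and genuinely different from the paper's. The paper proves Lemma~\ref{5.2} as a quick corollary of Lemma~\ref{5.1}: it integrates the bound $\overline{U}'(y) \geq \frac{6y^2}{5(1+y^2)} - 2$ to obtain $\overline{U}(y) \geq -\tfrac{4}{5}y - \tfrac{6}{5}\tan^{-1}(y)$, then substitutes both bounds directly into the two inequalities, leaving only the elementary positivity of $1 - \tan^{-1}(y)/y$. That is a three-line argument but depends on Lemma~\ref{5.1} already being in place. Your route is self-contained: exploiting the implicit relations \eqref{3.4} and \eqref{U0FAR} to eliminate $y$ and $\overline{U}$ in favor of the single parameter $p = -\overline{U}'$ is a clean and reusable technique, and the resulting polynomial inequalities show that both estimates hold with considerable margin. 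The tradeoff is more algebraic bookkeeping.

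One small gap in the writeup: your final positivity argument is organized as ``near $p=0$'' (where the quadratic tail bounded below by $\approx 89$ absorbs the negative monomials) and ``on $[1,2]$'' (where $a_7 p^7 \geq 160 p^4$, $a_6 p^6 \geq 130 p^3$). But the range that actually remains after the sign argument is $[4/5, 2]$, and on the subinterval $[4/5, 1]$ the quadratic tail alone does \emph{not} suffice: at $p = 1$ the negative terms contribute $-160 - 130 = -290$ while the tail gives only $90 + 81 + 108 = 279$. You need to invoke the positive $a_5 p^5$ term as well, which on $[4/5,1]$ contributes at least $20046 \cdot (4/5)^5 > 6500$ (using the smaller of the two brackets) and settles the matter immediately. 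This is trivial to patch but should be stated.
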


\begin{proof}
	Thanks to $\overline{U}$ being an odd function, we can restrict our consideration to the case $y\geq 0$. 
	By \eqref{num_4} and $\overline{U}(0)=0$, we obtain 
	\begin{equation*}
		\overline{U}(y)=\int^y_0 \overline{U}'(y')\,dy'\geq \int^y_0\frac{6y'^2}{5(1+y'^2)}\,dy'-2y= - \frac{4}{5} y-\frac{6}{5} \tan^{-1}(y).
	\end{equation*} 
	Hence by \eqref{num-4-0}, 
	\begin{equation*}
		\begin{split}
			1+\overline{U}'+\frac{2}{1+y^2}\left(\frac{5}{2}+\frac{\overline{U}}{y}\right)&\geq \frac{1}{1+y^2}\left(\frac{1}{5}y^2+\frac{12}{5}\left(1-\frac{\tan^{-1}(y)}{y}\right)\right) 
			\geq \frac{y^2}{5(1+y^2)}.
		\end{split}
	\end{equation*}
%
	Similarly, 	
	\begin{equation*}
		\begin{split}
			\frac{7}{2}+2\overline{U}'+\frac{1}{1+y^2}\left(\frac{5}{2}+\frac{\overline{U}}{y}\right)&\geq \frac{1}{1+y^2}\left(\frac{19}{10}y^2+\frac{6}{5}\left(1-\frac{\tan^{-1}(y)}{y}\right)\right) 
			\geq \frac{19y^2}{10(1+y^2)}.
		\end{split}
	\end{equation*}
	This completes the proof. 
\end{proof}

Next we show \eqref{num_6}.
\begin{lemma}\label{5.3}
	It holds that
	\begin{equation}\label{num_6-lem}
		\begin{split}
			\lambda|\overline{U}''(y)|\frac{y^2+1}{y^2}\int^{|y|}_0{\frac{y'^2}{y'^2+1 }\,dy'}
			\leq \delta \left(1+\overline{U}'+\frac{2}{y^2+1}\left(\frac{5}{2}+\frac{\overline{U}}{y}\right)-\frac{y^2}{500(1+y^2)}\right)
		\end{split}
	\end{equation} 
	for some $\lambda>1$ and $\delta\in(0,1)$. 
\end{lemma}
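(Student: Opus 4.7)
The plan is to exploit the odd symmetry of $\overline{U}$ to reduce to $y \geq 0$: both sides of \eqref{num_6-lem} are even in $y$, and at $y=0$ both vanish thanks to $\overline{U}''(0) = 0$ and $\int_0^0 = 0$. I would then split $[0,\infty)$ into three regions $[0,l] \cup [l,L] \cup [L,\infty)$ with $l \ll 1 \ll L$ to be chosen, and verify the inequality in each.

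For $0 < y \leq l$, I would Taylor expand using \eqref{U_taylor_small} and \eqref{UU-0}, giving $\overline{U}''(y) = 256\, y + O(y^3)$, $\overline{U}'(y) = -2 + 128\, y^2 + O(y^4)$, and $\overline{U}(y)/y = -2 + \tfrac{128}{3} y^2 + O(y^4)$. Since $\int_0^y \tfrac{y'^2}{1+y'^2}\,dy' = y - \arctan y = \tfrac{y^3}{3} + O(y^5)$, the LHS has leading order $\tfrac{256\,\lambda}{3}\, y^2$. After cancellation of the constant terms on the RHS, a direct expansion yields $\delta\bigl(\tfrac{637}{3}\, y^2 + O(y^4)\bigr)$. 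Thus any pair $(\lambda,\delta)$ with $\lambda < \tfrac{637}{256}\delta$ closes the estimate on $[0,l]$ for $l$ small. For $y \geq L$, I would use the asymptotics \eqref{asymp-y-infty}: $|\overline{U}''(y)| \sim \tfrac{2}{5}(50)^{-1/5} y^{-7/5}$ and $\int_0^y \tfrac{y'^2}{1+y'^2}\,dy' \sim y$, so the LHS decays like $y^{-2/5}$, while $\overline{U}' \to 0$, $\overline{U}/y \to 0$ force the RHS to converge to $\tfrac{499\delta}{500} > 0$; choosing $L$ large handles this range.

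For the intermediate range $y \in [l,L]$, I would use the substitution $V := \overline{U}'(y)$, which is a monotone bijection $[0,\infty) \to (-2,0]$ by \eqref{Vbound} and $W'(y) = V + \tfrac{5}{2} \geq \tfrac{1}{2} > 0$. Combining \eqref{U''_TEMP}, the implicit relation $y^2 = (2+V)(2V^2-2V+3)^2/[900(-V)^5]$ obtained by taking the fifth power of \eqref{U0FAR} (with $\beta=1$), and the closed-form $\overline{U} = -(2+V)^{1/2}(1-V)/[6(-V)^{3/2}]$ derived analogously to \eqref{Ubar35}, reduces \eqref{num_6-lem} to an algebraic inequality in the single variable $V$ on a compact subinterval of $(-2,0)$. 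This can then be verified either by elementary sign analysis of the resulting rational/algebraic function or, as the paper appears to do for the companion estimate \eqref{num_1} in Lemma~\ref{5.4}, by direct numerical computation. Final constants $\lambda > 1$ close to $1$ and $\delta < 1$ close to $1$ can be chosen compatibly with the small-$y$ constraint $\lambda < \tfrac{637}{256}\delta$.

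The main obstacle is the intermediate range, where neither the small-$y$ Taylor expansion nor the large-$y$ asymptotic expansion is sharp enough to conclude: one must rely on the implicit parametrization by $V$ and either a careful monotonicity/sign argument or numerical verification of the reduced one-variable inequality.
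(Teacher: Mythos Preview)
Your plan is sound in outline and would succeed, but the paper's proof organizes the argument differently and more efficiently. Rather than splitting $[0,\infty)$ into three $y$-regions (small/intermediate/large), the paper works entirely in the variable $X=\overline{U}'(y)\in[-2,0]$ from the start and splits at $X=-0.5$ (with a further split at $X=-0.8$). The key device is to bound the integral crudely, by $\int_0^y \tfrac{y'^2}{1+y'^2}\,dy' \le y$ on the range $X\in[-0.5,0]$ and by $\int_0^y \tfrac{y'^2}{1+y'^2}\,dy' \le \tfrac{y^3}{3}$ on $X\in[-2,-0.5]$; combined with the implicit relations \eqref{3.4}, \eqref{U''_TEMP}, \eqref{U0FAR} this reduces \eqref{num_6-lem} to explicit one-variable algebraic inequalities $f_1(X)\le \tfrac{499}{500}$ and $f_2(X)\le 0$, which are then checked by elementary monotonicity and endpoint evaluation with the concrete value $\lambda'=\lambda/\delta=1.01$.

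The advantage of the paper's route is that it eliminates your ``main obstacle'': there is no intermediate $y$-range requiring separate treatment or numerics, because the $X$-parametrization covers all of $[0,\infty)$ uniformly and the two integral bounds are each sharp enough on their respective $X$-ranges. Your small-$y$ Taylor computation (giving the constraint $\lambda<\tfrac{637}{256}\delta$) and large-$y$ asymptotic check are correct but ultimately redundant once one adopts the global $X$-variable approach. Your proposed treatment of $[l,L]$ via the $V$-parametrization is essentially what the paper does everywhere, so you would end up reproducing the paper's argument there anyway.
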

\begin{proof}
	First, we claim that there exists $\lambda'>1$ such that 
	\begin{equation}\label{5.3_claim}
		\lambda'|\overline{U}''(y)|\frac{y^2+1}{y^2}\int^{|y|}_0{\frac{y'^2}{1+y'^2 }\,dy'}
		\leq  1+\overline{U}'+\frac{2}{1+y^2}\left(\frac{5}{2}+\frac{\overline{U}}{y}\right)-\frac{y^2}{500(1+y^2)} 
	\end{equation}
	for all $y\geq 0$. If the claim holds, then thanks to odd symmetry of $\overline{U}$, we get \eqref{5.3_claim} for all $y\in\mathbb{R}$. Choosing $\lambda>1$, $\delta\in(0,1)$ such that $\lambda'=\lambda/\delta$, we can conclude \eqref{num_6-lem}.

	Now, we prove the claim, \eqref{5.3_claim}. 
	In view of  \eqref{Wbar_neg}, we see that $\RR_{y\ge0} = \{ y\geq 0 : -2\le \UU'(y) \le 0 \}$. 
	Let us split the domain $\RR_{\ge 0}$ into $\{ y\geq 0: -0.5\leq \overline{U}'(y)\leq 0\}$ and $\{ y\geq 0 : -2\leq \overline{U}'(y)\leq -0.5\}$. 
	
	First, we consider the case $\{ y\geq 0: -0.5\leq \overline{U}'(y)\leq 0\}$. Since $\textstyle \int^{y}_0\frac{y'^2}{y'^2+1}\,dy'\leq \int^{y}_0 1 \,dy'=y$, it suffices to show that  
	\begin{equation}\label{far_num_6}
		\lambda' |\overline{U}''(y)|\frac{y^2+1}{|y|}-\overline{U}'(y)\leq \frac{\frac{499}{500}y^2+1}{1+y^2}+\frac{2}{1+y^2}\left(\frac{\overline{U}}{y}+\frac{5}{2}\right). 
	\end{equation} 
	Using \eqref{U''_TEMP}, one can rewrite \eqref{far_num_6} as 
	\begin{equation*}
		\begin{split}
			&2\lambda' (2+\overline{U}')^{1/2}(-y^{2/5}\overline{U}')^{5/2}(-\overline{U}')+2\lambda' \left(\frac{2+\overline{U}'}{y^2}\right)^{1/2}(-\overline{U}')^{7/2}-\overline{U}' \leq \frac{\frac{499}{500}y^2+1}{1+y^2}+\frac{2}{1+y^2}\left(\frac{\overline{U}}{y}+\frac{5}{2}\right),
		\end{split}
	\end{equation*}
	which is also equivalent to 
	\begin{equation*}
		\begin{split}
			\frac{\lambda'}{15}(2+\overline{U}')(-\overline{U}')(2(\overline{U}')^2-2\overline{U}'+3)+\frac{60\lambda' (-\overline{U}')^6}{2(\overline{U}')^2-2\overline{U}'+3}-\overline{U}'\leq \frac{\frac{499}{500}y^2+1}{1+y^2}+\frac{2}{1+y^2}\left(\frac{2+\overline{U}'}{16(-y^{2/5}\overline{U}')^5}\right)^{1/2},
		\end{split}
	\end{equation*}
	thanks to \eqref{3.4}, \eqref{U0FAR} and 
	\begin{equation}\label{U0FAR3'}
		\left(\frac{2+\overline{U}'}{y^2}\right)^{1/2}=\frac{30(-\overline{U}')^{5/2}}{2(\overline{U}')^2-2\overline{U}'+3},
	\end{equation}
	from \eqref{U0FAR}.
	With $\lambda'=1.01$, it holds that
	\begin{equation*}
		\begin{split}
			f_1(X):=\frac{\lambda'}{15}(2+X)(-X)(2X^2-2X+3)+\frac{60\lambda' (-X)^6}{2X^2-2X+3}-X&\leq f_1(-0.5)
			\leq \frac{499}{500} 
		\end{split}
	\end{equation*}
	for $-0.5\leq X\leq 0$,
	where we used the fact that for $-0.5\leq X\leq 0$, 
	\begin{equation*}
		\begin{split}
			f_1'(X)&=-\frac{2\lambda'}{15}(4X^3+3X^2-X+3)-\frac{60\lambda'(-X)^5}{2X^2-2X+3}\left(6-\frac{(-X)(2-4X)}{2X^2-2X+3}\right)-1
			\leq 0
		\end{split}
	\end{equation*}
	coming from $4X^3+3X^2-X+3\geq 0$ and $\textstyle 0\leq \frac{(-X)(2-4X)}{2X^2-2X+3}\leq \frac{(0.5)\cdot 4}{3}=\frac{2}{3}$ for $-0.5\leq X\leq 0$. This proves \eqref{5.3_claim} for $\{ y\geq 0: -0.5\leq \overline{U}'(y)\leq 0\}$.
	
	Next, we consider the case $\{y\geq 0: -2\leq \overline{U}'(y)\leq -0.5\}$. Multiplying $y^{-2}$ to both sides of \eqref{num_6-lem}, we obtain from
	$\textstyle \int^{|y|}_0\frac{y'^2}{1+y'^2}\,dy'\leq \int^{|y|}_0 y'^2\,dy'=\frac{|y|^3}{3}$, \eqref{U''_TEMP}, \eqref{U0FAR} and \eqref{U0FAR3'}  that 
	\begin{equation}\label{lhs_num6}
		\begin{split}
			y^{-2}LHS&=\lambda'\frac{|\overline{U}''(y)|}{y^4}\int^{|y|}_0{\frac{y'^2}{1+y'^2 }\,dy'}+\lambda'\frac{|\overline{U}''(y)|}{y^2}\int^{|y|}_0{\frac{y'^2}{1+y'^2 }\,dy'}
			\\
			&\leq \frac{\lambda'}{3}\frac{|\overline{U}''(y)|}{|y|}+\lambda'\frac{|\overline{U}''(y)|}{|y|}\frac{y^2}{3}
			\\
			&= \frac{2\lambda'}{3}\left(\frac{2+\overline{U}'}{y^2}\right)^{1/2}(-\overline{U}')^{7/2} + \frac{2\lambda'}{3}\left(\frac{2+\overline{U}'}{y^2}\right)^{1/2}(-y^{2/5}\overline{U}')^{5}(-\overline{U}')^{-3/2}
			\\
			&= \frac{20\lambda' (-\overline{U}')^{6}}{2(\overline{U}')^2-2\overline{U}'+3}+ \frac{\lambda'}{45}(-\overline{U}')(2+\overline{U}')(2(\overline{U}')^2-2\overline{U}'+3).
		\end{split}
	\end{equation}
	Also, using \eqref{U0FAR3'} and $2+\overline{U}/y\geq 0$ from \eqref{Wbar_neg}, we have  
	\begin{equation*}\label{rhs_num6}
		\begin{split}
			y^{-2}RHS&=\frac{2+\overline{U}'}{y^2}+\frac{2}{y^2(1+y^2)}\left(2+\frac{\overline{U}}{y}\right)-\frac{501}{500}\frac{1}{1+y^2}
			\geq \frac{900(-\overline{U}')^{5}}{(2(\overline{U}')^2-2\overline{U}'+3)^2}-\frac{501}{500}.
		\end{split}
	\end{equation*}
	Set 
	\begin{equation*}
		\begin{split}
			f_2(X)&:=\frac{20\lambda' (-X)^{6}}{2X^2-2X+3}+ \frac{\lambda'}{45}(-X)(X+2)(2X^2-2X+3)- \frac{900(-X)^{5}}{(2X^2-2X+3)^2}+\frac{501}{500}
			\\
			&=\left(\frac{20(-X)^5}{(2X^2-2X+3)^2}+\frac{X+2}{45}\right)\left(\lambda'(-X)(2X^2-2X+3)-45\right)+ 2+X +\frac{501}{500}. 
		\end{split}
	\end{equation*}
	Then it is enough to show that $f_2(X)\leq 0$ for $-2\leq X\leq -0.5$, which will imply  \eqref{num_6-lem} holds  for $y$ such that $-2\leq \overline{U}' (y) \leq -0.5$.

	First, for $X \in [-0.8, -0.5]$, it holds that
	\begin{equation*}\label{-1-half}
		\begin{split}
			f_2(X)&\leq \left(\frac{20\cdot 0.5^5}{(2\cdot 0.5^2+2\cdot 0.5+3)^2}+\frac{2-0.5}{45}\right)\left(0.8 (2\cdot 0.8^2+2\cdot 0.8+3)\lambda'-45\right)+(2-0.5)+\frac{501}{500}
			\\
			&\leq 0.
		\end{split}
	\end{equation*}
	Here, we used the fact that $\textstyle \frac{20(-X)^5}{(2X^2-2X+3)^2}+\frac{X+2}{45}$ is a positive, decreasing function
	and $\lambda'(-X)(2X^2-2X+3)-45$ is a negative, decreasing function, 
	 where $\lambda'=1.01$.

	Similarly, for $X\in[-2, -0.8]$ and $\lambda'=1.01$, it is straightforward to check that 
	\begin{equation}\label{-2-1}
		f_2(X)\leq \left(\frac{20\cdot (0.8)^5}{(2\cdot (0.8)^2+2\cdot 0.8+3)^2}+\frac{2-0.8}{45}\right)\left(30\lambda'-45\right)+(2-0.8)+\frac{501}{500}\leq 0.
	\end{equation}
	In view of \eqref{lhs_num6}--\eqref{-2-1},	we see that  \eqref{5.3_claim} holds for $ y\geq 0$ such that $-2\leq \overline{U}'(y)\leq -0.5$.  
	Combining all, we conclude that \eqref{num_6-lem} holds for all $y\in\mathbb{R}$.
	This completes the proof. 
\end{proof}

Next, we show \eqref{num_1}. 
\begin{lemma}\label{5.4}
	There exist $m_0>0$ and $\lambda>1$ such that 
	\begin{equation}\label{num_1-lem}
		\begin{split}
			&\lambda(y^{2/5}+1)|\overline{U}''(y)|\int^{|y|}_0\frac{dy'}{1+(y')^{2/5}}
			\\
			&\qquad \leq \frac{10}{13(1+y^{2/5})}+\overline{U}'-\frac{2y^{2/5}}{5(1+y^{2/5})}\left(\frac{\overline{U}}{y}+\frac{6}{13y}\int^y_0\frac{dy'}{1+(y')^{2/5}}\right) \quad \text{ for } |y|\geq m_0. 
		\end{split}
	\end{equation}
	Moreover, one can verify that \eqref{num_1-lem} holds with $m_0= 2\cdot 10^6$ and $\lambda=1.0001$.
\end{lemma}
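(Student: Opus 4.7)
The plan is to exploit the parametric structure of the profile $\overline{U}$ in order to reduce the inequality \eqref{num_1-lem} to an elementary (albeit delicate) one-variable inequality. By odd symmetry of $\overline{U}$, I may restrict to $y \geq m_0 > 0$. Writing $X := \overline{U}'(y) \in (-2, 0)$, the identities \eqref{3.4}, \eqref{U''_TEMP} and \eqref{U0FAR} (with $\beta=1$) established in Proposition~\ref{Profile-construct} yield
\[ y^{2/5} = -\frac{30^{-2/5}(2+X)^{1/5}(2X^2 - 2X + 3)^{2/5}}{X}, \qquad \overline{U}(y) + \tfrac{5}{2} y = \frac{(2+X)^{1/2}}{4(-X)^{5/2}}, \qquad \overline{U}''(y) = 2(2+X)^{1/2}(-X)^{7/2}. \]
This lets me express every quantity on both sides of \eqref{num_1-lem}, except the integral, as a closed-form function of $X$ alone. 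For the integral, the substitution $u = (y')^{1/5}$ and partial fractions give the explicit antiderivative
\[ \int_0^y \frac{dy'}{1+(y')^{2/5}} = \frac{5}{3} y^{3/5} - 5 y^{1/5} + 5 \arctan(y^{1/5}). \]
Combining these, \eqref{num_1-lem} becomes a transcendental inequality in the single parameter $X$ (with $y$ recovered from $X$), which I will analyze on the range $|X| \leq |\overline{U}'(m_0)|$.

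Next I would carry out the asymptotic analysis as $y \to \infty$, i.e.\ $X \to 0^-$. Using the Taylor expansion of the three explicit functions above, the leading order of the right-hand side of \eqref{num_1-lem} reduces to
\[ \Bigl( \tfrac{10}{13} - \tfrac{5}{3} \cdot 50^{-1/5} \Bigr) y^{-2/5} + \bigl( -50^{-1/5} + \tfrac{2}{3} \cdot 50^{-1/5} \bigr) y^{-2/5} \cdot (1 + o(1)) + \tfrac{2}{5} \cdot \tfrac{6}{13} \cdot 5 \cdot y^{-4/5} + \cdots, \]
which simplifies to a leading coefficient of $\tfrac{6}{13} - \tfrac{1}{3} \cdot 50^{-1/5} \approx 0.3091$ on $y^{-2/5}$. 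On the left-hand side, using $|\overline{U}''(y)| \sim \tfrac{2}{5} \cdot 50^{-1/5} y^{-7/5}$ and the integral $\sim \tfrac{5}{3} y^{3/5}$, the leading coefficient is $\lambda \cdot \tfrac{2}{3} \cdot 50^{-1/5} \approx 0.3049\,\lambda$. Since $0.3049 / 0.3091 \approx 0.986 < 1$, any $\lambda < 1.013\ldots$—in particular $\lambda = 1.0001$—gives strict inequality at the leading order, with a positive (but narrow) margin.

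Finally, I would turn the asymptotic inequality into a uniform one on $\{y \geq m_0\}$ by quantitative Taylor estimates. Expanding $(2+X)^{\alpha}$ and $(2X^2 - 2X + 3)^{\alpha}$ about $X=0$ with explicit remainders (the functions are smooth on $[-X_0, 0]$), and controlling the $\arctan(y^{1/5})$ correction by $|\arctan(y^{1/5}) - \pi/2| \leq y^{-1/5}$, the inequality \eqref{num_1-lem} reduces to a polynomial inequality in $X$ with explicit, elementary coefficients, which is straightforward to verify for $|X|$ sufficiently small. Converting back, this yields a threshold for $y$ beyond which the inequality holds; the choice $m_0 = 2 \cdot 10^6$ can be confirmed by plugging bounds at $y = m_0$ directly, while the sharper bound $m_0 = 93$ (with the same $\lambda$) follows from a numerical evaluation over $93 \leq y \leq Y_\infty$ for some sufficiently large $Y_\infty$ above which the rigorous asymptotic argument kicks in.

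The main obstacle is the thinness of the asymptotic margin: the two coefficients $\tfrac{10}{13} \approx 0.7692$ and $\tfrac{5}{3} \cdot 50^{-1/5} \approx 0.7622$ nearly cancel, so the genuine gap (of order $0.004\,y^{-2/5}$ with $\lambda = 1.0001$) is a fraction of the $O(y^{-4/5})$ correction from $-5 y^{1/5}$ and $5\arctan(y^{1/5})$ in the integral, as well as from the subleading terms in $\overline{U}'(y)$ and $\overline{U}''(y)$. Tracking these corrections with explicit constants, rather than symbolic $o(1)$'s, is what drives the threshold $m_0$; this is both the crux of the analytic portion and the reason the numerically optimal threshold is considerably smaller than the analytically safe $2\cdot 10^6$.
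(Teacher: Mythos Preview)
Your proposal is correct and follows essentially the same route as the paper: reduce both sides to explicit functions of $X=\overline{U}'(y)$ via the parametric identities \eqref{3.4}, \eqref{U''_TEMP}, \eqref{U0FAR}, \eqref{Ubar35}, use the closed-form antiderivative $\int_0^y\frac{dy'}{1+(y')^{2/5}}=\tfrac53 y^{3/5}-5y^{1/5}+5\arctan(y^{1/5})$, and compare the leading $y^{-2/5}$ coefficients $\tfrac{6}{13}-\tfrac13\cdot50^{-1/5}$ versus $\lambda\cdot\tfrac23\cdot50^{-1/5}$ to get existence of $\lambda>1$ and $m_0$. The only difference is in the quantitative verification of $m_0=2\cdot10^6$: the paper bounds $y^{2/5}(LHS-RHS)$ above by explicit one-variable functions $g(X)$ and $h(X)$ (dropping favorable sign-definite terms and freezing the $\tfrac{y^{2/5}}{1+y^{2/5}}$ factors at $y=2\cdot10^6$) and checks $g\le0$, $h\le0$ over \emph{all} of $X\in[-2,0]$ by an elementary case split at $X=-\tfrac{1}{400}$, thereby avoiding any a priori bound on $\overline{U}'(2\cdot10^6)$; your Taylor-expansion route would instead need such a bound (obtainable from \eqref{U0FAR}) before the small-$|X|$ expansion can be invoked.
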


\begin{proof}
	Since $\overline{U}$ has odd symmetry, without loss of generality, we restrict ourselves to the case $y\geq 0$.
	
	Multiplying the left and right sides of \eqref{num_1-lem} by $y^{2/5}$, and using $\textstyle \int^y_0 \frac{dy'}{1+(y')^{2/5}}=\frac{5}{3}y^{3/5}-5(y^{1/5}-\tan^{-1}(y^{1/5}))$, we have 
		\begin{equation*}
		\begin{split}
			y^{2/5}\cdot LHS &= \lambda y^{2/5}(y^{2/5}+1)|\overline{U}''(y)|\int^{|y|}_0 \frac{dy'}{1+(y')^{2/5}} 
			\\
			&= \frac{5\lambda}{3}y^{2/5}(y^{2/5}+1)|\overline{U}''(y)|(y^{3/5}-3(y^{1/5}-\arctan(y^{1/5})))
		\end{split}
	\end{equation*}
	and 
		\begin{equation*}
		\begin{split}
			y^{2/5}\cdot RHS &= \frac{10y^{2/5}}{13(1+y^{2/5})}+y^{2/5}\overline{U}'-\frac{2y^{2/5}}{5(1+y^{2/5})}\left(\frac{\overline{U}}{y^{3/5}}+\frac{6}{13y^{3/5}} \int^y_0 \frac{dy'}{1+(y')^{2/5}}\right)
			\\
			&=\frac{10y^{2/5}}{13(1+y^{2/5})} +y^{2/5}\overline{U}' - \frac{2y^{2/5}}{5(1+y^{2/5})}\left(\frac{\overline{U}}{y^{3/5}}+\frac{10}{13y^{3/5}}\left(y^{3/5}-3(y^{1/5}-\arctan(y^{1/5}))\right)\right). 
		\end{split}
	\end{equation*}
	Then using \eqref{U_taylor_far} with $\beta=1$ in Proposition~\ref{Profile-construct}, we can check that
	\[ \lim_{y\to \infty} y^{2/5}\cdot(RHS- LHS ) =  \frac{6}{13} - \frac{1}{3 \cdot 50^{1/5}} - \frac{2 \lambda }{3 \cdot 50^{1/5}} > \frac{6}{13} - \frac{ \lambda }{50^{1/5}}  >0\]
	for some $\lambda>1$. In fact, the last inequality holds thanks to the fact that $\textstyle {6/13} - {50^{-1/5}} \approx 0.04233 >0$. 
	This proves that \eqref{num_1-lem} holds for some $m_0>0$ and $\lambda>1$. 
	
	On the other hand, one can verify \eqref{num_1-lem} with the specific values $m_0= 2\cdot 10^6$ and $\lambda=1.0001$. 
	Using \eqref{U''_TEMP}, 
	\begin{equation}\label{LHS-num_1}
		\begin{split}
			y^{2/5}\cdot LHS &= \lambda y^{2/5}(y^{2/5}+1)|\overline{U}''(y)|\int^{|y|}_0 \frac{dy'}{1+(y')^{2/5}} 
			\\
			&= \frac{5\lambda}{3}y^{2/5}(y^{2/5}+1)|\overline{U}''(y)|(y^{3/5}-3(y^{1/5}-\arctan(y^{1/5})))
			\\
			&=\frac{10\lambda}{3}(2+\overline{U}')^{1/2}(-\overline{U}')^{7/2}(y^{7/5}+y)-L
			\\
			&=\frac{10\lambda}{3}(2+\overline{U}')^{1/2}\left((-y^{2/5}\overline{U}')^{7/2}+(-y^{2/5}\overline{U}')^{5/2}(-\overline{U}')\right)-L
			\\
			&=\frac{10\lambda}{3(30)^{7/5}}(2+\overline{U}')^{6/5}(2(\overline{U}')^2-2\overline{U}'+3)^{7/5}+\frac{\lambda}{9}(2+\overline{U}')(2(\overline{U}')^2-2\overline{U}'+3)(-\overline{U}')-L,
		\end{split}
	\end{equation}
	where 
	\begin{equation*}
		L(y,\overline{U}'):=10\lambda y^{2/5}(y^{2/5}+1)(y^{1/5}-\arctan (y^{1/5}))(2+\overline{U}')^{1/2}(-\overline{U}')^{7/2}\geq 0.
	\end{equation*}
	
	Similarly, multiplying the right hand side of \eqref{num_1-lem} by $y^{2/5}$, and using \eqref{U0FAR} with $\beta=1$, we have 
	\begin{equation}\label{RHS-num_1}
		\begin{split}
			y^{2/5}\cdot RHS &= \frac{10y^{2/5}}{13(1+y^{2/5})}+y^{2/5}\overline{U}'-\frac{2y^{2/5}}{5(1+y^{2/5})}\left(\frac{\overline{U}}{y^{3/5}}+\frac{6}{13y^{3/5}} \int^y_0 \frac{dy'}{1+(y')^{2/5}}\right)
			\\
			&=\frac{10y^{2/5}}{13(1+y^{2/5})}-(30)^{-2/5}(2+\overline{U}')^{1/5}(2(\overline{U}')^2-2\overline{U}'+3)^{2/5}
			\\
			&\quad-\frac{2y^{2/5}}{5(1+y^{2/5})}\left(\frac{\overline{U}}{y^{3/5}}+\frac{10}{13y^{3/5}}\left(y^{3/5}-3(y^{1/5}-\arctan(y^{1/5}))\right)\right)
			\\
			&= \frac{6y^{2/5}}{13(1+y^{2/5})}-(30)^{-2/5}(2+\overline{U}')^{1/5}(2(\overline{U}')^2-2\overline{U}'+3)^{2/5}-\frac{2y^{2/5}}{5(1+y^{2/5})}\frac{\overline{U}}{y^{3/5}}  +R,
		\end{split}
	\end{equation}
	where
	\begin{equation*}
		R(y,\overline{U}'):=\frac{12y^{2/5}}{13(1+y^{2/5})}\frac{y^{1/5}-\arctan(y^{1/5})}{y^{3/5}}\geq 0.
	\end{equation*}  
	Then, by subtracting them and substituting \eqref{Ubar35} with $\beta=1$, one can check that for $y\geq 2\cdot 10^6$,
	\begin{equation*}\label{far_num1_1}
		\begin{split}
			y^{2/5}(LHS-RHS)&\leq g(\overline{U}'),
		\end{split}
	\end{equation*} 
	where 
	\begin{equation*}
		\begin{split}
			g(X)&:=\frac{10\lambda}{3(30)^{7/5}}(2+X)^{6/5}(2X^2-2X+3)^{7/5}+\frac{\lambda}{9}(2+X)(2X^2-2X+3)(-X)
			\\
			&\quad -\frac{6\cdot (2\cdot 10^6)^{2/5}}{13(1+(2\cdot 10^6)^{2/5})}+(30)^{-2/5}(2+X)^{1/5}(2X^2-2X+3)^{2/5}
			\\
			&\quad -\frac{(30)^{3/5}\cdot (2\cdot 10^6)^{2/5}}{15(1+(2\cdot 10^6)^{2/5})}\frac{(2+X)^{1/5}(1-X)}{(2X^2-2X+3)^{3/5}}.
		\end{split}
	\end{equation*}
	Here, we used the facts that $\textstyle \frac{y^{2/5}}{1+y^{2/5}}$ is increasing for $y\geq 0$ and $\textstyle \frac{(2+X)^{1/5}(1-X)}{(2X^2-2X+3)^{3/5}}\geq 0$ for $-2\leq X\leq 0$. Note by \eqref{Wbar_neg} that $-2\leq \overline{U}'(y) \leq 0$ for all $y\in\RR$. 
	
	First, for $\lambda=1.0001$ and $k=-1/400$, we show that 
	\begin{equation}\label{far_num1_3}
		\begin{split}
			g(X)\leq 0 \quad\text{for } X\in \Omega_1:=[k,0].
		\end{split}
	\end{equation}
	To check this, it suffices to show that $g'(X)\leq 0$ for all $X\in\Omega_1$ and $g\left(k\right)<0$. 
	By a straightforward calculation, we have  
	\begin{equation*}
		\begin{split}
			g'(X) &= \frac{4\lambda}{(30)^{7/5}}(2+X)^{1/5}(2X^2-2X+3)^{7/5} + \frac{14 \lambda}{3 (30)^{7/5}}(2+X)^{6/5} ( 2X^2 - 2X + 3)^{2/5} (4X - 2) \\
			& + \frac{\lambda}{9} ( 2X^2 - 2X + 3 )(-X) + \frac{\lambda}{9} (2+X)(4X - 2)(-X)- \frac{\lambda}{9} (2+X) \left( 2X^2 - 2X + 3 \right)  \\
			& + \frac{(30)^{-2/5}}{5}(2+X)^{-4/5}(2X^2-2X+3)^{2/5}+\frac{2\cdot (30)^{-2/5}}{5}(2+X)^{1/5}(2X^2-2X+3)^{-3/5}(4X-2) \\
			& -\frac{(30)^{3/5}\cdot(2\cdot 10^6)^{2/5}}{75(1+(2\cdot10^6)^{2/5})}(2+X)^{-4/5}(1-X)(2X^2-2X+3)^{-3/5} \\
			& + \frac{(30)^{3/5}\cdot(2\cdot 10^6)^{2/5}}{15(1+(2\cdot10^6)^{2/5})} (2+X)^{1/5}(2X^2-2X+3)^{-3/5} \\
			& +\frac{3\cdot (30)^{3/5}\cdot(2\cdot 10^6)^{2/5}}{75(1+(2\cdot10^6)^{2/5})}(2+X)^{1/5}(1-X)(2X^2-2X+3)^{-8/5}(4X-2). \\
		\end{split}
	\end{equation*}
	Note that for $\lambda=1.0001$, it holds for $X\in\Omega_{1}$ that
	\begin{equation*}
		\begin{split}
			g'(X)&\leq \frac{4\lambda}{(30)^{7/5}}\cdot 2^{1/5}\left(2k^2-2k+3\right)^{7/5} -\frac{28\lambda}{3(30)^{7/5}}\cdot \left(2+k\right)^{6/5}\cdot 3^{2/5} \\
			&-\frac{\lambda}{9}\left(2k^2-2k+3\right)k+0-\frac{\lambda}{3}\left(2+k\right)  \\
			&+\frac{(30)^{-2/5}}{5}\left(2+k\right)^{-4/5}\left(2k^2-2k+3\right)^{2/5}-\frac{4\cdot (30)^{-2/5}}{5} \left(2+k\right)^{1/5}  \left(2k^2-2k+3\right)^{-3/5}  \\
			&-\frac{(30)^{3/5}\cdot (2\cdot 10^6)^{2/5}}{75(1+(2\cdot 10^6)^{2/5})}\cdot 2^{-4/5} \left(2k^2-2k+3\right)^{-3/5} 
			+\frac{(30)^{3/5}\cdot (2\cdot 10^6)^{2/5}}{15(1+(2\cdot 10^6)^{2/5})} \cdot 2^{1/5} \cdot 3^{-3/5} \\
			&-
			\frac{6\cdot(30)^{3/5}(2\cdot 10^6)^{2/5}}{75(1+(2\cdot 10^6)^{2/5})}\cdot \left(2+k\right)^{1/5} \cdot\left(2k^2-2k+3\right)^{-8/5} \\
			&\leq 0.
		\end{split}
	\end{equation*}
	Therefore,  \eqref{far_num1_3} holds. 
	
	Next we deal with the case $\textstyle \{y\in\mathbb{R}\; : \; \overline{U}'(y)\in \Omega_2:=  [-2,-\frac{1}{400}]\}$.
	Using \eqref{3.4}, \eqref{LHS-num_1} and \eqref{RHS-num_1}, we find 
	\begin{equation*}
		\begin{split}
			y^{2/5}(LHS-RHS)&\leq h(\overline{U}'),
		\end{split}
	\end{equation*}
	where
	\begin{equation*}
		\begin{split}
			h(X)&:=\frac{10\lambda}{3(30)^{7/5}}(2+X)^{6/5}(2X^2-2X+3)^{7/5}+\frac{\lambda}{9}(2+X)(2X^2-2X+3)(-X)
			\\
			&\quad-\frac{6\cdot (2\cdot 10^6)^{2/5}}{13(1+(2\cdot 10^6)^{2/5})} +(30)^{-2/5}(2+X)^{1/5}(2X^2-2X+3)^{2/5}
			\\
			&\quad -\frac{(2\cdot 10^6)^{4/5}}{1+(2\cdot 10^6)^{2/5}}+\frac{1}{10\cdot (2\cdot 10^6)^{1/5}(1+(2\cdot 10^6)^{2/5})}\frac{(2+X)^{1/2}}{(-X)^{5/2}}.
		\end{split}
	\end{equation*}
	Note that for $\lambda=1.0001$, it holds for $\textstyle X\in \Omega_2:=  [-2,-\frac{1}{400}]$ that
	\begin{equation*}
		\begin{split}
			h(X)&\leq \frac{10\lambda}{3(30)^{7/5}}\left(2-\frac{1}{400}\right)^{6/5}\cdot 15^{7/5}+\frac{10\lambda}{3}\left(2-\frac{1}{400}\right)
			\\
			&\quad-\frac{6\cdot (2\cdot 10^6)^{2/5}}{13(1+(2\cdot 10^6)^{2/5})} +(30)^{-2/5}\left(2-\frac{1}{400}\right)^{1/5}\cdot15^{2/5}
			\\
			&\quad -\frac{(2\cdot 10^6)^{4/5}}{1+(2\cdot 10^6)^{2/5}}+\frac{1}{10\cdot (2\cdot 10^6)^{1/5}(1+(2\cdot 10^6)^{2/5})}\frac{(2-\frac{1}{400})^{1/2}}{\left(\frac{1}{400}\right)^{5/2}}
			\\
			&\leq 0,
		\end{split}
	\end{equation*}
	since $2X^2-2X+3\leq 15$ and $2+X\leq 2-400^{-1}$ for $400^{-1}\leq -X\leq 2$. 
	Combining all, we conclude that \eqref{num_1-lem} holds for all $y\in\mathbb{R}$ with $\lambda=1.0001$ and $m_0=2\cdot 10^6$. 
\end{proof}

\begin{remark}
	In Lemma~\ref{5.4}, we prove that there exist  $m_0>0$ and $\lambda>1$ such that \eqref{num_1-lem} holds for $|y| \ge m_0$, and also verify that it holds with $m_0=2\cdot 10^6$ and $\lambda = 1.0001$. In fact, we can numerically verify that 
	\eqref{num_1-lem} holds with $m_0 = 93$ and $\lambda =1.0001$, see Figure~\ref{LHS_RHS_w_and_U_bar_scale_vs_V_bar}. 
	However, for mathematical completeness, we carry out our analysis using Lemma~\ref{5.4} with $m_0=2\cdot 10^6$ and $\lambda = 1.0001$. 
%
%
%
%
%
%
%
%
%
%
%
%
%
%
\end{remark}

\subsection{Numerical verification}
We first numerically construct the profile $\UU(y)$, the solution to the ODE \eqref{Ueq-int} with \eqref{U0}. 
	As we discussed in the proof of Proposition~\ref{Profile-construct}, we consider the initial value problem $W'=V(W), W(0)=0 \; (y \geq0)$, where $V=V(W)$ is implicitly defined by \eqref{CW2}.
To numerically solve this, 
we utilize the $8$th order explicit Runge-Kutta method. 
 Then, using the relation $\UU(y) = W(y) - 5y/2$ and the odd symmetry of $\UU$, we obtain a numerical plot of $\UU$ that displays its asymptotic behavior for $|y| \ll 1$ and $|y| \gg 1$, respectively (see Figure~\ref{profile_asymp}).
	
	\begin{figure}[H]
	\centering
	\begin{subfigure}[b]{0.3\columnwidth}
		\centering
		\includegraphics[width=\columnwidth]{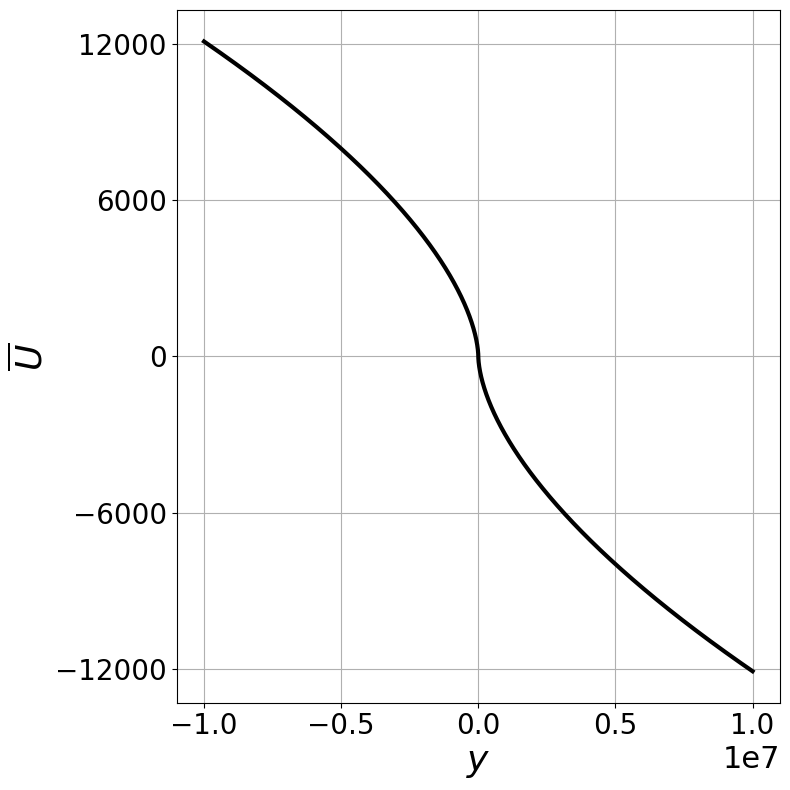} 
		\label{overline_U_profile}
	\end{subfigure}
	\hspace{-0.15cm}
	\begin{subfigure}[b]{0.3\columnwidth}
		\centering
		\includegraphics[width=\columnwidth]{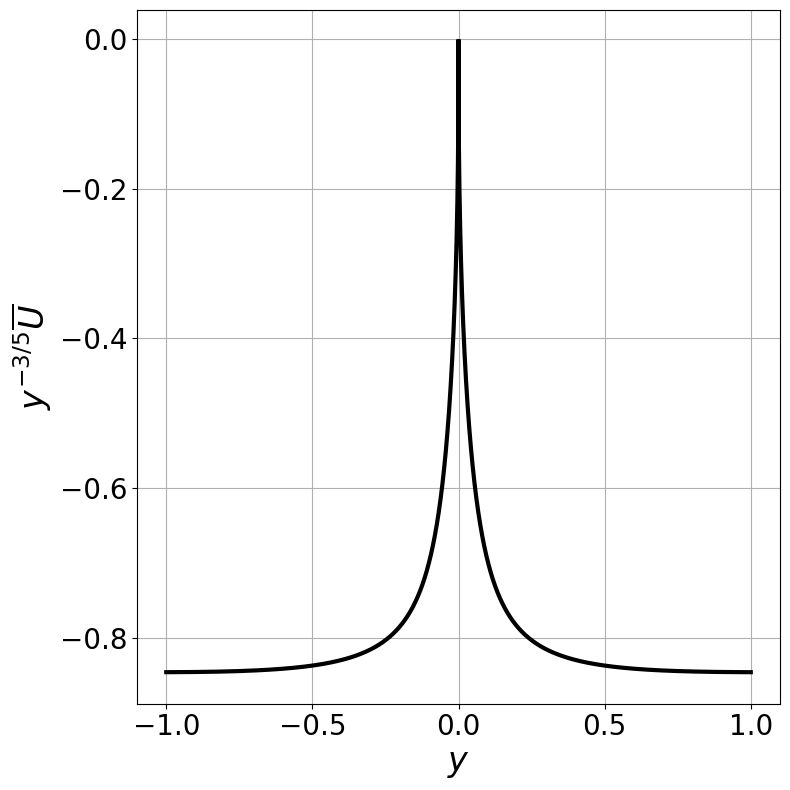} 
		\label{asymptotic_near_0}
	\end{subfigure}
	\hspace{-0.15cm}
	\begin{subfigure}[b]{0.3\columnwidth}
		\centering
		\includegraphics[width=\columnwidth]{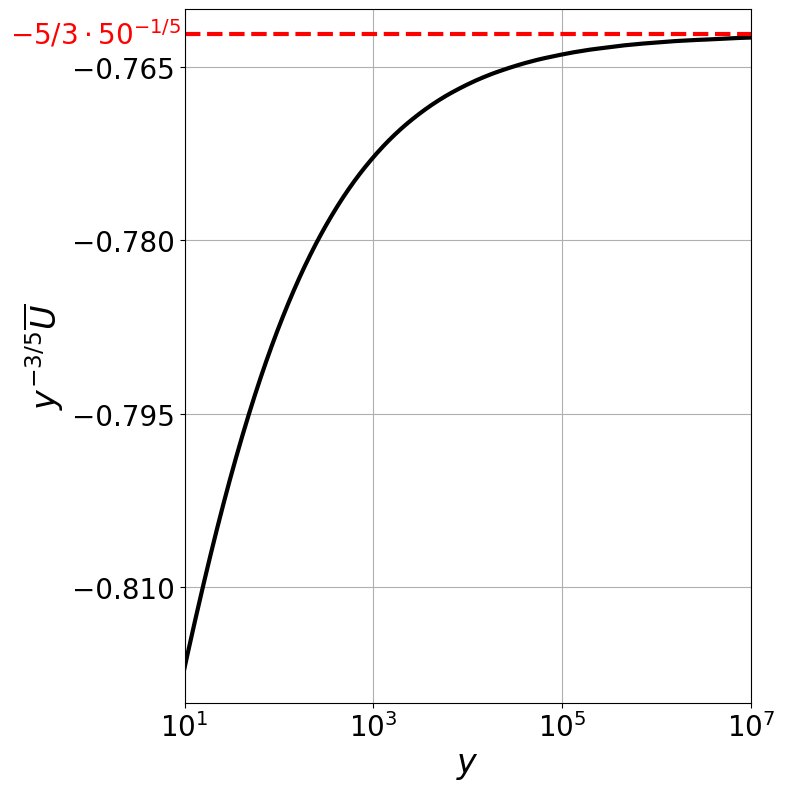} 
		\label{asymptotic_infty}
	\end{subfigure}
	\caption{Numerical profile of $\UU$ (left) and the asymptotic behavior of $\UU$ for $|y|\ll1$ and $|y| \gg 1$, respectively (center and right).}
	\label{profile_asymp}
\end{figure}

	With $\UU$ numerically constructed profile, we can numerically verify that \eqref{num_1-lem} in Lemma~\ref{5.4} holds for all $|y|\geq 93$ with $\lambda = 1.0001$.   
Specifically, we multiply both side of \eqref{num_1-lem} by $10^3 y^{2/5}$ and set $\lambda=1.0001$ to have 
	 \begin{equation*}
	 \begin{split}
	 L_{w}(y) & =1.0001 \cdot 10^{3}y^{2/5} (y^{2/5}+1)|\overline{U}''(y)|\int^{|y|}_0\frac{dy'}{1+(y')^{2/5}},
	 \\
	 R_{w}(y) & =10^{3}y^{2/5}\left(\frac{10}{13(1+y^{2/5})}+\overline{U}'-\frac{2y^{2/5}}{5(1+y^{2/5})}\left(\frac{\overline{U}}{y}+\frac{6}{13y}\int^y_0\frac{dy'}{1+(y')^{2/5}}\right)\right).
	 \end{split}
	 \end{equation*} 
	 Then, by a direct numerical computation, we obtain the graphs of $L_{w}$ and $R_{w}$, illustrated in Figure~\ref{LHS_RHS_w_and_U_bar_scale_vs_V_bar}. The graphs display that $L_w(y) < R_w(y)$ for $y\in [93, 10^4]$. This verifies numerically that \eqref{num_1-lem} holds for $y\in [93, 10^4]$ and $\lambda =1.0001$. 

Next we shall compare the profiles of $\UU$ solving \eqref{Ueq}, and the self-similar blow-up profile of the Burgers solution. It is known, see \cite{CSW}, \cite{EF} for instance, that the Burgers equation admits the stable self-similar solution $\overline{V}$ satisfying 
\begin{equation*}	
\left(1+\frac{1}{2}\overline{V}'\right)\overline{V}'+\left(\frac{1}{2}\overline{V}+\frac{3}{2}y\right)\overline{V}''=0.
\end{equation*} 
In fact, we rescaled it from the original one by simply multiplying by $2$ so that $\overline{V}'(0) =-2$, aligning it with the blow-up profile $\UU$ which satisfies $\UU'(0) =-2$.  
Note that this rescaling does not affect the asymptotic behavior for $|y|\gg 1$. 
%
The plots of $\UU$ and $\overline{V}$ are shown in Figure \ref{LHS_RHS_w_and_U_bar_scale_vs_V_bar}, which demonstrate that $\UU$ grows faster than $\overline{V}$ for $|y|\gg1$.
This supports our result in Proposition~\ref{Profile-construct} that $\UU$ grows faster than $\overline{V}$ of the Burgers solution does, which eventually yields that the singularities in the CH equation we have constructed are milder than those of the Burgers equation.
%

\begin{figure}[H]
	\centering
	\begin{subfigure}[b]{0.3\columnwidth}
		\centering
		\includegraphics[scale=0.275]{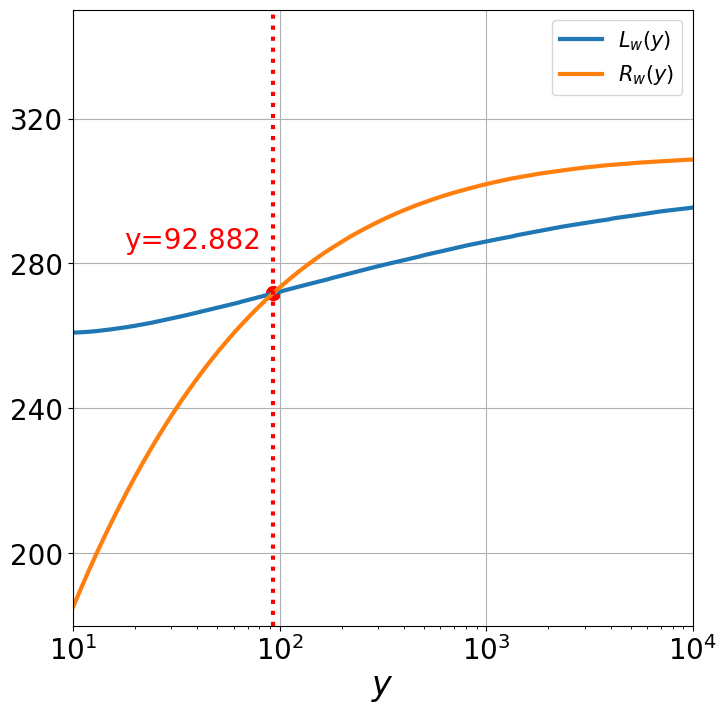}
		\label{LHS_RHS_w}
	\end{subfigure}
	\hspace{1.5cm}
	\begin{subfigure}[b]{0.3\columnwidth}
		\centering
		\includegraphics[scale=0.245]{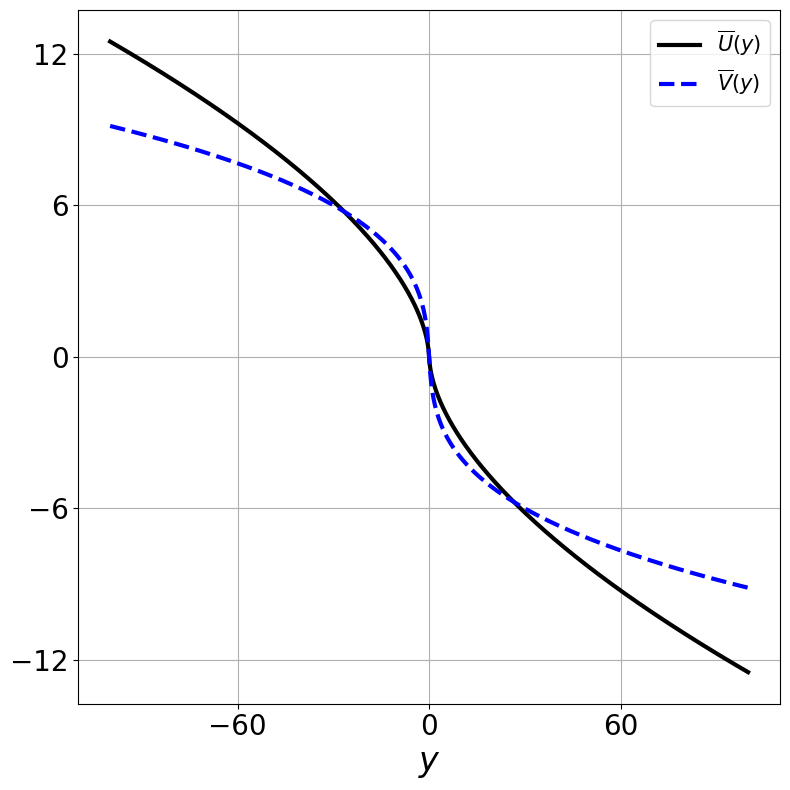}
		\label{U_bar_scale_vs_V_bar}
	\end{subfigure}
	\caption{Numerical plots of $L_{w}$ and $R_{w}$, with $L_{w}-R_{w}$ changing sign at $y\approx92.882$ (left). Profiles of $\UU$ and $\overline{V}$, where $\overline{V}$ is the self-similar profile of the Burgers (right).}
	\label{LHS_RHS_w_and_U_bar_scale_vs_V_bar}
\end{figure}

\subsection{Maximum principles}

We present a maximum principle, a modified form of that developed in \cite{BSV}, for use in our analysis. 
We consider the initial value problem: 
\begin{equation}\label{IVP-f}
	\begin{split} 
		& \partial_s f(y,s)+D(y,s) f(y,s)+U(y,s)\partial_yf(y,s)=F(y,s)+\int_{\mathbb{R}}f(y',s)K(y,s;y')\,dy',  \quad
		s\ge s_0,  y\in \mathbb{R},
		\\
		& f(y,s_0) = f_0(y). 
	\end{split} 
\end{equation}
\begin{lemma}(\cite{BKK})
	\label{max_2}
	Let $f$ be a classical solution to IVP \eqref{IVP-f}. Let $\Omega \subseteq \mathbb{R}$ be any compact set. 
	Suppose that 
	\begin{subequations}
		\begin{align}
			& \|f(\cdot,s)\|_{L^{\infty}(\Omega)}\leq \mo, \label{max_2_1}
			\\
			&  \|f(\cdot,s_0)\|_{L^{\infty}(\mathbb{R})}\leq \mo, \label{max_2_1'}
			\\
			& \int_{\mathbb{R}}|K(y,s;y')|\,dy'\leq \delta D(y,s)\quad \text{for}\quad (y,s)\in \Omega^c\times [s_0,\infty), \label{max_2_2}
			\\
			& \inf_{(y,s)\in \Omega^c\times [s_0,\infty)}D(y,s)\geq \lambda_D >0, \label{max_2_3}
			\\
			& \|F(\cdot,s)\|_{L^{\infty}(\Omega^c)}\leq F_0, \label{max_2_4}
			\\ 
			& \limsup_{|y|\rightarrow \infty}|f(y,s)| <  2\mo \label{max_2_6}
		\end{align}
	\end{subequations} 
	for some $\mo, F_0, \lambda_D >0$ and $\delta\in(0,1)$ satisfying 
	\begin{equation}\label{D-cond} 
	\mo \lambda_D> \frac{F_0}{ 2 (1-\delta ) }. 
	\end{equation}  
	Then, it holds that  $\|f(\cdot,s)\|_{L^{\infty}(\mathbb{R})}\leq 2 \mo$ for all $s\ge s_0$. 
\end{lemma}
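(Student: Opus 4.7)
\medskip

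\noindent\textbf{Proof proposal.} The plan is a contradiction/continuity argument driven by a pointwise maximum-principle computation at the extremum. Set
\begin{equation*}
G(s):=\|f(\cdot,s)\|_{L^\infty(\mathbb{R})}, \qquad s_\ast:=\inf\{\,s\ge s_0 : G(s)>2 d_0\,\},
\end{equation*}
with $s_\ast=\infty$ if the set is empty. The goal is to rule out $s_\ast<\infty$. By the initial bound \eqref{max_2_1'} and continuity of $s\mapsto G(s)$ (which follows from $f$ being a classical solution), if $s_\ast<\infty$ then $G(s_\ast)=2d_0$ and $G(s)\le 2d_0$ on $[s_0,s_\ast]$. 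I would first localize the maximizer: by \eqref{max_2_1}, $|f(\cdot,s_\ast)|\le d_0<2d_0$ on $\Omega$, and by \eqref{max_2_6} the $\limsup$ as $|y|\to\infty$ is strictly less than $2d_0$, so there exists $y_\ast=y_\ast(s_\ast)\in\Omega^c$ (finite) at which $|f(y_\ast,s_\ast)|=2d_0$. In particular $\partial_y f(y_\ast,s_\ast)=0$.

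Next I would extract the sign-definite pointwise decay at the maximizer. Assume $f(y_\ast,s_\ast)=2d_0>0$ (the other case is symmetric). Evaluating \eqref{IVP-f} at $(y_\ast,s_\ast)$ and using $\partial_y f(y_\ast,s_\ast)=0$ together with the bounds $|f(\cdot,s_\ast)|\le 2d_0$, \eqref{max_2_2} and \eqref{max_2_4},
\begin{equation*}
\partial_s f(y_\ast,s_\ast)\le -D(y_\ast,s_\ast)\cdot 2d_0 + F_0 + \delta\, D(y_\ast,s_\ast)\cdot 2 d_0 \le -2 d_0 (1-\delta)\lambda_D + F_0,
\end{equation*}
where I used \eqref{max_2_3}. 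Hypothesis \eqref{D-cond} gives $2 d_0 (1-\delta)\lambda_D> F_0$, hence $\partial_s f(y_\ast,s_\ast)<0$ strictly.

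To convert this pointwise information into a statement about $G$, I would reproduce the Rademacher-style envelope argument used in Lemma~\ref{mainprop_1-p}: $G$ is locally Lipschitz in $s$ (by boundedness of the right-hand side of \eqref{IVP-f} on a neighborhood of $s_\ast$), hence differentiable a.e., and testing $G(s_\ast-h)\ge \pm f\bigl(y_\ast-hU(y_\ast,s_\ast),\,s_\ast-h\bigr)$ for $h\to 0^+$ gives
\begin{equation*}
\frac{d^-}{ds}G(s_\ast)\le (\partial_s f+ U\partial_y f)(y_\ast,s_\ast)=\partial_s f(y_\ast,s_\ast)<0,
\end{equation*}
since the transport term vanishes at the critical point. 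Thus $G$ is strictly decreasing through $s_\ast$, contradicting $G(s_\ast)=2d_0$ being the first crossing. This closes the argument and yields $G(s)\le 2d_0$ for all $s\ge s_0$.

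The main technical obstacle I anticipate is the one-sided differentiation of $G$ at $s_\ast$ when the maximizer $y_\ast(s)$ is not unique or not continuous in $s$; here I would select $y_\ast$ as the minimal maximizer (as in Lemma~\ref{mainprop_1-p}) and appeal to Rademacher's theorem to perform the time differentiation at almost every $s$, combined with continuity of $G$ to promote the a.e. inequality to the crucial crossing time. The other bookkeeping points --- verifying that $y_\ast\in\Omega^c$ so that \eqref{max_2_2}--\eqref{max_2_4} apply, and that the kernel integral $\int f(y',s_\ast)K(y_\ast,s_\ast;y')\,dy'$ can be bounded by $2d_0 \int|K|\,dy'\le 2d_0\,\delta D(y_\ast,s_\ast)$ since $|f|\le 2d_0$ on all of $\mathbb{R}$ at time $s_\ast$ --- are straightforward consequences of \eqref{max_2_1}, \eqref{max_2_2} and the choice of $s_\ast$.
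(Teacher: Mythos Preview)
Your proposal is correct and follows the standard first-crossing maximum-principle argument; the paper itself does not supply a proof of this lemma but refers to Lemma~6.6 in \cite{BKK}, and your approach mirrors exactly the Rademacher/envelope machinery the paper deploys in the proof of Lemma~\ref{mainprop_1-p}. One minor simplification: at the single crossing time $s_\ast$ the one-sided Dini derivative estimate $\limsup_{h\to 0^+}\frac{G(s_\ast)-G(s_\ast-h)}{h}\le \partial_s f(y_\ast,s_\ast)<0$ already forces $G(s_\ast-h)>2d_0$ for small $h>0$, contradicting the definition of $s_\ast$ directly---you do not need to invoke Rademacher's theorem or a.e.\ differentiability here, since no integration over a time interval is required (unlike the argument in Lemma~\ref{mainprop_1-p}).
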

For the proof, we refer to that of Lemma 6.6 in the Appendix of \cite{BKK}. 

Next, we consider the initial value problem: 
	\begin{equation}\label{f-ivp-2}
	\begin{split}
		& \partial_s f(y,s) + D(y,s)f(y,s) + U(y,s)\partial_y f(y,s)  =F(y,s),  \quad
		s\ge s_0,  y\in \mathbb{R},
				\\
		& f(y,s_0) = f_0(y).  
        \end{split}
	\end{equation}  
	We present Lemma~\ref{rmk2}, which addresses the spatial decay properties of solutions to the transport-type equation \eqref{f-ivp-2} under suitable assumptions. 
\begin{lemma}(\cite{BKK}) \label{rmk2}
	Let $f$ be a classical solution to IVP \eqref{f-ivp-2}. Assume that 
	$U$, $D$ and $F$ are smooth functions satisfying
	\begin{subequations}
		\begin{align} 
			& \inf_{\{|y| \geq N ,\, s\in[s_0,\infty)\}}  U(y,s) \frac{y}{|y|}  > 0,\label{rmk2_ass0}
			\\
			& \inf_{\{|y| \geq N,\, s\in[s_0,\infty)\}}D(y,s)\geq \lambda_D,\label{rmk2_ass1}
			\\
			& \|F (\cdot, s) \|_{L^{\infty}(|y| \geq N)}\leq F_0e^{-s\lambda_F}\label{rmk2_ass2}
		\end{align}
	\end{subequations}
	for some {$ \lambda_D, \lambda_F, N, F_0  \geq  0$}.   Then it holds that  
	\begin{equation*}
		\begin{array}{l l}
			\limsup_{| y |\rightarrow \infty}|f(y,s)|\leq \limsup_{|y|\rightarrow \infty}{|f(y,s_0)|}e^{-\lambda_D(s-s_0)}+\frac{F_0}{\lambda_D-\lambda_F}e^{-s\lambda_F} & \quad \text{if } \lambda_D>\lambda_F, \\
			\limsup_{| y |\rightarrow \infty}|f( y ,s)|\leq \limsup_{|y|\rightarrow \infty}{|f(y,s_0)|}e^{-\lambda_D(s-s_0)}+\frac{F_0e^{-s_0\lambda_F}}{\lambda_F-\lambda_D}e^{-\lambda_D(s-s_0)} & \quad \text{if } \lambda_F>\lambda_D.
		\end{array}
	\end{equation*}
\end{lemma}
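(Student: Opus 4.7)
\textbf{Proof proposal for Lemma~\ref{rmk2}.}
My plan is to use the method of characteristics to reduce \eqref{f-ivp-2} to an ODE along each characteristic curve, then use the lower bound on $D$ together with the exponential bound on $F$ to get the desired decay, and finally transfer the estimate to $\limsup_{|y|\to\infty}|f(y,s)|$ by exploiting the outward-flowing property of $U$ at infinity. First I would define the characteristic $\psi(y,s)$ by
\begin{equation*}
\partial_s \psi(y,s) = U(\psi(y,s), s), \qquad \psi(y,s_0) = y.
\end{equation*}
Assumption \eqref{rmk2_ass0} says that $U(y,s)$ has the same sign as $y$ for $|y|\geq N$, so a characteristic starting at $y$ with $|y|\geq N$ can never cross back below $|y|=N$; in fact $|\psi(y,s)|\geq |y|$ is monotone in $s$, so $|\psi(y,s)|\to\infty$ as $|y|\to\infty$ uniformly on compact $s$-intervals. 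This is what will later allow me to pass limits inside the composition $f(\psi(y,s),s)$.

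Next, along the characteristic the PDE \eqref{f-ivp-2} becomes the linear ODE
\begin{equation*}
\frac{d}{ds}\bigl(f(\psi(y,s),s)\bigr) + D(\psi(y,s),s)\,f(\psi(y,s),s) = F(\psi(y,s),s).
\end{equation*}
Using the integrating factor $\exp\bigl(\int_{s_0}^{s} D(\psi(y,\sigma),\sigma)\,d\sigma\bigr)$ and applying \eqref{rmk2_ass1}--\eqref{rmk2_ass2} (which are valid because $|\psi(y,\sigma)|\geq N$ along the trajectory), I obtain
\begin{equation*}
|f(\psi(y,s),s)| \leq |f_0(y)|\,e^{-\lambda_D(s-s_0)} + F_0\int_{s_0}^{s} e^{-\lambda_F \tau}\,e^{-\lambda_D(s-\tau)}\,d\tau.
\end{equation*}
Evaluating the time integral splits into the two cases: for $\lambda_D>\lambda_F$ it is bounded by $\frac{F_0}{\lambda_D-\lambda_F}e^{-\lambda_F s}$, and for $\lambda_F>\lambda_D$ it is bounded by $\frac{F_0 e^{-\lambda_F s_0}}{\lambda_F-\lambda_D}e^{-\lambda_D(s-s_0)}$; this is a routine separation-of-the-two-exponentials calculation.

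Finally, for each fixed $s\geq s_0$, the map $y\mapsto \psi(y,s)$ is a smooth diffeomorphism of $\{|y|\geq N\}$ onto a subset of $\{|y'|\geq N\}$ whose image covers all sufficiently large $|y'|$ (by the outward-flowing property, together with the fact that the backward flow is well-defined for $|y'|$ large enough). Hence taking $|y|\to\infty$ in the pointwise bound and then using $|f(y',s)|$ as a function of $y' = \psi(y,s)$, I get
\begin{equation*}
\limsup_{|y'|\to\infty}|f(y',s)| \leq \Bigl(\limsup_{|y|\to\infty}|f_0(y)|\Bigr)e^{-\lambda_D(s-s_0)} + \text{(the appropriate bound)},
\end{equation*}
which is the claim. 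I expect no real obstacle here; the only mild subtlety is justifying that the $\limsup$ on the left-hand side coincides with the one along the characteristic family, but this is guaranteed by the sign condition \eqref{rmk2_ass0} which prevents characteristics from escaping to finite $|y'|$.
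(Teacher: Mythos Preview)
Your proposal is correct and follows the natural characteristic method: integrate along forward characteristics (which stay in $\{|y|\geq N\}$ by \eqref{rmk2_ass0}), use the damping/forcing bounds \eqref{rmk2_ass1}--\eqref{rmk2_ass2}, and then pass to the $\limsup$ using that the forward flow $y\mapsto\psi(y,s)$ is order-preserving with $|\psi(y,s)|\geq |y|$, hence covers all sufficiently large $|y'|$ and sends $|y|\to\infty$ to $|y'|\to\infty$. The paper itself does not give a proof but refers to Lemma~6.7 in the Appendix of \cite{BKK}; your argument is precisely the standard one and is what that reference carries out.

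One minor point worth tightening: the surjectivity onto all large $|y'|$ relies on the forward characteristic from $y=N$ existing up to time $s$ (so that $\Psi_s([N,\infty))=[\Psi_s(N),\infty)$), which in turn requires global-in-$s$ existence of characteristics. In the paper's applications $U=\mathcal{V}$ has at most linear growth in $y$, so this is automatic, but in the abstract statement it is an implicit smoothness/growth assumption on $U$ that you are tacitly using.
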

For the proof, we refer to Lemma~$6.7$ in the Appendix of \cite{BKK}.

\section*{Acknowledgements.}
B.K. was supported by Basic Science Research Program through the National Research Foundation of Korea (NRF) funded by the Ministry of science, ICT and future planning (NRF-2020R1A2C1A01009184).

 \end{document}